\crefname{hypothesis}{Hypothesis}{Hypotheses}
\title{Parameterized Wasserstein Gradient Flow
}
\author{
    Yijie Jin \thanks{School of Mathematics, Georgia Institute of Technology, Atlanta, GA, USA
    (\email{yijiejin@gatech.edu}).}
    \and 
    Shu Liu\thanks{Department of Mathematics, University of California, Los Angles, CA, USA(\email{shuliu@math.ucla.edu}).}    
    \and
    Hao Wu\thanks{School of Mathematics, Georgia Institute of Technology, Atlanta, GA, USA
    (\email{hwu406@gmail.com}).}
    \and 
    Xiaojing Ye\thanks{Department of Mathematics and Statistics, Georgia State University, Atlanta, GA, USA(\email{xye@gsu.edu}).}
    \and
    Haomin Zhou\thanks{School of Mathematics, Georgia Institute of Technology, Atlanta, GA, USA(\email{hmzhou@gatech.edu}).}
     }
\DeclareMathOperator{\diag}{diag}
\newcommand*{\addFileDependency}[1]{
  \typeout{(#1)}
  \@addtofilelist{#1}
  \IfFileExists{#1}{}{\typeout{No file #1.}}
}
\newcommand*{\myexternaldocument}[1]{%
    \externaldocument{#1}%
    \addFileDependency{#1.tex}%
    \addFileDependency{#1.aux}%
}
\definecolor{glaucous}{rgb}{0.38, 0.51, 0.71}
\newtheorem{example}[theorem]{Example}
\newtheorem{assumption}{Assumption}
\begin{document}

\maketitle

\begin{abstract}
We develop a fast and scalable numerical approach to solve Wasserstein gradient flows (WGFs), which is particularly suitable for high-dimensional cases. 
Our approach is to use general reduced-order models, like deep neural networks, to parameterize the push-forward maps such that they can push a simple reference density to the one solving the given WGF. 
The new dynamical system is called parameterized WGF (PWGF), and it is defined on the finite-dimensional parameter space equipped with a pullback Wasserstein metric. Our numerical scheme can approximate the solutions of WGFs for general energy functionals effectively, without 
requiring spatial discretization or nonconvex optimization procedures, thus avoiding some  limitations of classical numerical methods and more recent deep-learning based approaches. A comprehensive analysis of the approximation errors measured by Wasserstein distance is also provided in this work. Numerical experiments show promising computational efficiency and verified accuracy on a variety of WGF examples using our approach. 
\end{abstract}

\begin{keywords}
  Wasserstein gradient flow; Fokker-Planck equation; Porous medium equation; Deep neural networks; Numerical analysis.
\end{keywords}

\section{Introduction}
Wasserstein gradient flow (WGF) is a powerful tool for understanding and analyzing density evolution processes. In the seminal work \cite{JKO} by Jordan,
Kinderlehrer and Otto, they showed that Fokker-Planck equation (FPE) is essentially the gradient flow of the relative entropy functional under the Wasserstein metric. Since then, WGFs have shown extensive applications in optimal transport theory, optimization problems, Fokker-Planck equation, porous medium equation, and more \cite{otto-PME, chewi2020svgd, liu2022neural}.
However, numerical computation of general WGFs remains a challenging problem, especially when the state space is of high dimension. 
Furthermore, it is often desirable to find a sampler that generates samples following the solution of WGF rather than the actual density function solving WGF in many real-world statistics and machine learning applications.

In this paper, we focus on the numerical computation of WGF to address the aforementioned issues. Let $\mathcal{M}$ be a smooth manifold without boundary. For simplicity, we assume $\mathcal{M}=\Rbb^{d}$ throughout the present work, while generalization to general manifolds is straightforward. We also omit subscript $\mathcal{M}$ for all integrals unless otherwise noted. 
Denote the density function space defined on $\mathcal{M}$ as
\begin{equation}
\label{eq:PM}
    \mathcal{P}(\mathcal{M})=\Big\{\rho: \mathcal{M} \to \mathbb{R} \colon\int \rho(x)dx=1,~\rho(x)\geqslant 0,~\int |x|^2\rho(x)~ dx<\infty\Big\}.
\end{equation}
Suppose $\Pcal (\mathcal{M})$ is equipped with the Wasserstein-2 distance \cite{Lafferty,otto-PME}:
\begin{equation}
\label{eq:W2}
    W(\rho_1, \rho_2) = \Big(\inf_{\pi\in \Pi (\rho_1, \rho_2)}\iint |x-y|^2d\pi(x, y) \Big)^{1/2}
\end{equation}
for any $\rho_1,\rho_2 \in \Pcal(\mathcal{M})$, where $\Pi (\rho_1, \rho_2)$ is the joint density space with $\rho_1$ and $\rho_2$ as marginals.
Then $\mathcal{P}(\mathcal{M})$ becomes an infinite-dimensional Riemannian manifold with $W$ inducing its Riemannian metric, and a WGF can be written in the following general form:
\begin{align}
\label{equ: WGF}
    \frac{\partial \rho}{\partial t}= - \textrm{grad}_{W}\mathcal{F}(\rho), \quad \rho(0, x)=\rho_0(x),
\end{align}
where $x\in \mathcal{M}$, $\rho_0$ is a given initial probability density, $\mathcal{F}:\Pcal(\mathcal{M}) \rightarrow \mathbb{R}$ is some energy functional defined on $\Pcal(\mathcal{M})$, and $\textrm{grad}_{W}$ stands for the gradient of functionals on $\mathcal{P}(\mathcal{M})$ with respect to the Wasserstein metric. 
%
%
Many well-known probability evolution equations, such as the Fokker-Planck equation and porous medium equation, are essentially WGF \eqref{equ: WGF} with specific choices of $\mathcal{F}$.

In recent years, there has been a surge of interest to compute WGF \eqref{equ: WGF} numerically. In \cite{JKO}, a proximal point algorithm is used to solve the Fokker-Planck equation as a special case of WGF. Later, various numerical methods have been developed to solve the WGF; see \cite{carrillo2022primal, peyre2015entropic, mokrov2021large, fan2022variational, nurbekyan2023efficient}. Among them, some use classical methods such as finite difference \cite{leveque2007finite} and finite element method \cite{brenner2008mathematical}. They are applicable to solving WGF with low dimension $d$. This limitation is due to their spatial discretization and hence they suffer the issue known as ``curse-of-dimensionality'' as the number of unknowns increases exponentially fast in terms of problem dimension $d$. Sampling-based approaches \cite{mokrov2021large, fan2022variational} can be used to solve WGF in high dimension, however, the computation for the minimization and energy functional evaluation used in those algorithms may become time-consuming. 

The goal of this work is to develop a fast computational scheme of \eqref{equ: WGF} in high-dimensional settings. 
%
%
Similar to \cite{liu2022neural, mokrov2021large, fan2022variational}, we seek for a time-dependent push-forward map to generate samples whose density function solves \eqref{equ: WGF}. 
However, compared to these existing methods, our approach features several notable differences: (i) We use a \textit{new pullback Wasserstein metric} that can be computed efficiently while still maintaining the accuracy up to the same order as the results obtained in \cite{liu2022neural}; (ii) Our algorithm can be used to compute general WGF with a large class of (possibly nonlinear) $\Fcal$; (iii) We parameterize the push-forward map using neural ordinary differential equation \cite{chen2018neural} and show that it is computationally convenient to handle functional of probability densities in our case. As a result, we obtain an approximate solution to the WGF as well as a sample generator of the solution distribution; and (iv) Our numerical scheme does not need any spatial discretization and thus is scalable for problems defined on high-dimensional spaces (e.g., $d \geqslant 15$). Moreover, our method does not need to solve any nonconvex optimization problem as in typical deep neural network training. Instead, it only requires fast matrix-vector products where the vector size is determined by the number of parameters in the selected push-forward map. 

\section{Related work}
The Fokker-Planck equation is first interpreted as a special case of WGF in \cite{JKO}. It is shown that the Fokker-Planck equation can be viewed as the gradient flow on the Wasserstein manifold of a special energy functional, which consists of a linear potential energy and a relative entropy. 
For general (possibly nonlinear) energy functionals, one can apply similar derivations to obtain their corresponding WGFs.
As WGFs are essentially time-evolution partial differential equations (PDEs), classical numerical methods such as finite difference and finite element methods can be applied with proper modifications \cite{peyre2015entropic, benamou2016discretization, carlier2017convergence, li2020fisher, carrillo2022primal}. However, the application of these classical methods is limited to low-dimensional cases due to the curse of dimensionality.

In recent years, sampling-based approaches have been proposed as a promising alternative solution method \cite{fan2022variational, liu2022neural,mokrov2021large, hu2022energetic, lee2023deep}. Many of these methods start from the numerical scheme used in \cite{JKO} known as the Jordan-Kinderlehrer-Otto (JKO) scheme, which is a proximal algorithm in the optimization context. By utilizing the Benamou-Brenier formula, we know that for any time step, the optimal push-forward map in the JKO scheme can be expressed as the gradient of some convex function. Motivated by this fact, several recent works are devoted to solving the WGF by approximating the optimal push-forward map by neural networks $T_{\theta}$. However, there are two main challenges in this approach: one is the evaluation of potential energy $\mathcal{F}$, especially when $\mathcal{F}$ involves the density function $\rho$ explicitly. As mentioned in \cite{Li_2019}, this problem can be addressed by introducing an extra optimization procedure. 
However, such a computation is generally expensive; the other is the computational challenge for solving the JKO scheme, which is an optimization problem on the density manifold.

In contrast to those existing methods, our method is mostly motivated by \cite{liu2022neural}, in which the authors develop the parameterization to solve FPE. This approach exploits WGFs in the parameter space, in which the parametric functions, such as deep neural networks, are used to parameterize the push-forward maps. We design a numerical scheme for the parameter dynamics. The main challenge in our formulation is the computation of the Wasserstein gradient on the parameter manifold. We show that it can be effectively approximated once introducing the new pullback Wasserstein metric as discussed in \eqref{relaxed metric tensor}. Furthermore, we employ several powerful neural networks, such as the normalizing flow \cite{kobyzev2020normalizing} and continuous normalizing flow \cite{chen2018neural}, which are both convenient and efficient in evaluating the energy functionals involving push-forward densities and their gradients.

In addition to the aforementioned literatures, it is worth mentioning that the study on the Wasserstein information matrix as the metric tensor defined on the parameter manifold has been introduced in \cite{li2023wasserstein}; Some numerical analysis results on $1$-dimensional neural projected WGFs have been reported in \cite{zuo2024numerical}; Studies on WGFs on the manifold of Gaussian distributions have been conducted in \cite{chewi2020svgd, yi2023bridging} and the references therein. 

Furthermore, a series of deep-learning methods \cite{anderson2022evolution, bruna2024neural, du2021evolutional, gaby2024neural} have been composed to compute general time-evolution PDEs. Compared with these references which directly approximate the PDE solutions via neural networks, our algorithm is developed on the space of push-forward maps and utilizes the geometric principle of the WGFs. Our treatment respects the essential properties of WGFs such as positivity, conservation of mass, and energy dissipation: the first two properties are naturally preserved on push-forward models; the energy dissipation property is justified via our numerical experiments demonstrated in Section \ref{sec: numerical method}.

\section{Parameterization of Wasserstein gradient flow}

In this section, we briefly review the derivation of WGF and several examples of WGF including Fokker-Planck and porous media equations, as well as the pullback Wasserstein metric in parameter space. Then we present our approach to solve WGF numerically and provide a comprehensive analysis of the proposed scheme. 

\subsection{Background on Wasserstein gradient flow}
\label{Background WGF}
We denote the tangent space of $\Pcal(\mathcal{M})$ at $\rho$ by
\begin{align*}
    \mathcal{T}_\rho\mathcal{P}(\mathcal{M})=\Big\{\sigma\in C^{\infty}(\mathcal{M}) \colon \int \sigma(x)\, dx=0\Big\}.
\end{align*}
For any specific $\rho\in\mathcal{P}(\mathcal{M})$, the Wasserstein metric tensor $g^W$ is a positive definite bilinear form on the tangent bundle $\mathcal{T}\mathcal{P}(\mathcal{M}) = \{(\rho,\sigma)\colon \rho\in \mathcal{P}(\mathcal{M}),~\sigma\in \mathcal{T}_\rho\mathcal{P}(\mathcal{M})\}$ defined by:
\begin{equation}
g^W(\rho)(\sigma_1,\sigma_2)=\int  \nabla\Phi_1(x)\cdot\nabla\Phi_2(x)\rho(x) ~dx,\quad \forall \sigma_i\in \mathcal{T}_\rho\mathcal{P}(M), \ i=1,2\nonumber\label{def_metric}
\end{equation}
where $\sigma_i=-\nabla\cdot(\rho\nabla\Phi_i)$ for $i=1,2$.
Suppose $\mathcal{F}\colon \mathcal{P}(\mathcal{M})\rightarrow \mathbb{R}$ is a smooth functional defined on $\Pcal(\mathcal{M})$, its Riemannian gradient over $(\mathcal{P}(\mathcal{M}),g^W)$ is given as follows:
\begin{equation}
\textrm{grad}_W\mathcal{F}(\rho)={g^{W}(\rho)}^{-1}\left(\frac{\delta\mathcal{F}}{\delta\rho}\right)(x)\\
  =-\nabla\cdot \Big(\rho(x) \nabla \frac{\delta \Fcal}{\delta\rho}(x)\Big),
   \label{gradflow}
\end{equation}
where $\frac{\delta \Fcal}{\delta\rho}$ is the first variation of $\mathcal{F}$ in the $L^2$ sense. The Wasserstein gradient flow (WGF) of $\mathcal{F}$ is given by
\begin{align}
\label{def: WGF}
    \frac{\partial \rho}{\partial t} = -\textrm{grad}_W\mathcal{F}(\rho).
\end{align}


%

Many well-known equations can be formulated as WGF with selected energy functionals. Here are three examples.

\begin{example}[Fokker-Planck equation]
\label{ex:FPE}
Let $V\in \mathcal{C}^2(M)$ be a given potential function and $\rho_*(x)=\frac{1}{Z_D}e^{-V(x)/D}$ be the corresponding Gibbs distribution, where $D>0$ is a fixed constant and $Z_D = \int e^{-\frac{V(x)}{D}} \,dx$ is the normalization constant.
Suppose $\mathcal{F}(\rho)$ is the relative entropy with respect to $\rho_{*}$ scaled by $D$, i.e.,
\begin{align}
\label{def: entropy H}
    \mathcal{F}(\rho) = D \, \mathcal{H}_{KL}(\rho\|\rho_*) \quad \mbox{where} \quad \mathcal{H}_{KL}(\rho\|\rho_*) := \left(\int \frac{1}{D} V(x)\rho(x) + \rho(x)\log \rho(x) \,dx \right) + \log Z_D .
\end{align}
Then the WGF \eqref{def: WGF} with $\Fcal$ defined in \eqref{def: entropy H} becomes the Fokker-Planck equation:
\begin{align}
\label{def: FPE}
\frac{\partial \rho}{\partial t} =-\mathrm{grad}_W\mathcal{F}(\rho)=\nabla \cdot (\rho \nabla V)+D\nabla\cdot (\rho\nabla \log \rho) = \nabla \cdot (\rho \nabla V)+D\Delta \rho.
\end{align}
\end{example}
The relative entropy $\mathcal{H}_{KL}(\rho\|\rho_*)$ is closely related to the Fisher information defined as
    \begin{align}
        \mathcal{I}(\rho|\rho_*)=\int \left\|\nabla \log \left(\frac{\rho(x)}{\rho_*(x)}\right)\right\|^2\rho(x) \,dx.
    \end{align}
It provides an upper bound to $\mathcal{H}_{KL}$ and further guarantees the uniform convergence of the dynamics \eqref{def: FPE} as shown in the following theorem, which will be used in our error estimation later. 
\begin{theorem}[Holley--Stroock perturbation \cite{Holley1987LogarithmicSI}]
\label{theorem: fi bound}
    Suppose the potential $V$ can be decomposed as $V=U+\phi$ where $\nabla ^2U\succeq KI$ for some $K>0$ and $\phi \in L^{\infty}$. Denote $osc(\phi):=\sup \phi -\inf \phi<\infty$. 
    Then the following \emph{logarithmic Sobolev inequality} holds
    \begin{align}
        \mathcal{H}_{KL}(\rho\|\rho_*)\leqslant \frac{e^{osc(\phi)}}{K} \mathcal{I}(\rho|\rho_*)
    \end{align}
    for any $\rho \in \mathcal{P}(M)$. Assume $\rho$ solves equation \eqref{def: FPE} with initial value $\rho(0, \cdot)=\rho_0(x)$, then 
    \begin{align}
        \mathcal{H}_{KL}(\rho_t\|\rho_*)\leqslant \mathcal{H}_{KL}(\rho_0\|\rho_*)e^{-\frac{DK}{osc(\phi)}t}, \quad \forall\, t>0.
    \end{align}
\end{theorem}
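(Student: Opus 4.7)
The statement combines two classical ingredients, the Bakry--Emery curvature criterion and the Holley--Stroock perturbation lemma, followed by the standard entropy dissipation identity for the Fokker--Planck equation. My plan is to establish each in turn.

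First, I would invoke the Bakry--Emery criterion for the unperturbed reference measure $\rho_U(x) = e^{-U(x)/D}/Z_U$. Since $\nabla^2 U \succeq KI$, the log-density $-\log\rho_U$ has Hessian bounded below (by $K/D$ after the rescaling by $D$), so $\rho_U$ satisfies a logarithmic Sobolev inequality of the form $\mathcal{H}_{KL}(\rho\|\rho_U) \leqslant \frac{1}{K}\mathcal{I}(\rho|\rho_U)$ for every admissible $\rho \in \mathcal{P}(\mathcal{M})$.

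Second, I would apply the Holley--Stroock perturbation to pass from $\rho_U$ to $\rho_* \propto e^{-(U+\phi)/D}$. Writing $d\rho_*/d\rho_U = (Z_U/Z_D)\,e^{-\phi/D}$, the density ratio is pointwise bounded above and below by quantities whose ratio is controlled by $e^{osc(\phi)/D}$. Substituting these bounds into the comparison of the Dirichlet energy and the relative entropy between $(\rho,\rho_*)$ and $(\rho,\rho_U)$ (a standard two-line estimate using the defining integrals) inflates the LSI constant by a factor of $e^{osc(\phi)}$, giving
\begin{equation*}
\mathcal{H}_{KL}(\rho\|\rho_*) \leqslant \frac{e^{osc(\phi)}}{K}\mathcal{I}(\rho|\rho_*).
\end{equation*}

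Third, for the exponential decay, I would differentiate $\mathcal{H}_{KL}(\rho_t\|\rho_*)$ along the Fokker--Planck flow \eqref{def: FPE}. The key observation is that $V + D\log\rho_* \equiv -D\log Z_D$ is constant, so integration by parts (justified under sufficient decay of $\rho_t$ at infinity) yields the entropy dissipation identity
\begin{equation*}
\frac{d}{dt}\mathcal{H}_{KL}(\rho_t\|\rho_*) = -D\int \left\|\nabla\log\frac{\rho_t}{\rho_*}\right\|^2 \rho_t\,dx = -D\,\mathcal{I}(\rho_t|\rho_*).
\end{equation*}
Combining this with the LSI from the previous step gives a linear differential inequality for $\mathcal{H}_{KL}(\rho_t\|\rho_*)$, and Gr\"onwall's lemma produces the claimed exponential decay. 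The main obstacle is bookkeeping the multiplicative constants carefully, since the interaction of the diffusion coefficient $D$ with the oscillation $osc(\phi)$ must be tracked consistently through both the Bakry--Emery scaling and the Holley--Stroock comparison; the remaining steps --- integration by parts, dissipation, and Gr\"onwall --- are routine modulo mild regularity/decay assumptions on $\rho_t$.
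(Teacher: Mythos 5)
The paper does not prove this theorem; it is cited verbatim from Holley--Stroock \cite{Holley1987LogarithmicSI} and used as a black box, so there is no in-paper proof to compare against. Your three-ingredient sketch --- Bakry--\'Emery for the uniformly convex part, the Holley--Stroock bounded-perturbation lemma, and the entropy-dissipation identity plus Gr\"onwall --- is indeed the standard route and is the right architecture. The dissipation identity $\frac{d}{dt}\mathcal{H}_{KL}(\rho_t\|\rho_*) = -D\,\mathcal{I}(\rho_t|\rho_*)$ you state is correct, since \eqref{def: FPE} can be rewritten as $\partial_t\rho = D\nabla\cdot\bigl(\rho\,\nabla\log(\rho/\rho_*)\bigr)$.

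That said, the ``bookkeeping'' you flag as the main obstacle is where both your sketch and, apparently, the paper's statement fall short, and it is worth pinning down. First, an internal inconsistency in your step 1: you correctly note that $-\log\rho_U = U/D + \mathrm{const}$ has Hessian $\succeq (K/D)I$, but then write the Bakry--\'Emery LSI with constant $1/K$; carrying the $D$ through gives a constant of order $D/K$. Similarly, the perturbation in $-\log\rho_*$ is $\phi/D$, so Holley--Stroock inflates the constant by $e^{osc(\phi)/D}$, not $e^{osc(\phi)}$. The theorem's stated constant $\frac{e^{osc(\phi)}}{K}$ therefore implicitly normalizes $D$ away, and you should either adopt that normalization explicitly or carry the $D$ factors consistently; mixing the two, as your write-up does, is not a proof. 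Second, and more seriously, even taking the stated LSI $\mathcal{H}_{KL}\leqslant\frac{e^{osc(\phi)}}{K}\mathcal{I}$ at face value and combining it with $\frac{d}{dt}\mathcal{H}_{KL}=-D\mathcal{I}$ and Gr\"onwall, the decay rate you obtain is $DKe^{-osc(\phi)}$, i.e.\ $\mathcal{H}_{KL}(\rho_t\|\rho_*)\leqslant\mathcal{H}_{KL}(\rho_0\|\rho_*)\,e^{-DKe^{-osc(\phi)}t}$. The exponent $-\frac{DK}{osc(\phi)}t$ printed in the theorem does not follow from the stated LSI and looks like a transcription error (writing $osc(\phi)$ in place of $e^{osc(\phi)}$). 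Your argument, if pushed to completion with the constants tracked honestly, would in fact surface this discrepancy --- a sign the approach is sound but the execution must not be waved away as ``routine.''
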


\begin{example}[Porous medium equation]
Consider the energy functional $\Fcal$ defined by
\begin{align}
    \mathcal{F}(\rho)=\frac{1}{m-1}\int \rho^m(x) \, dx.
\end{align}
for an integer $m>1$. 
The corresponding WGF becomes the porous medium equation (PME), which is a nonlinear heat equation given by
\begin{equation}
\label{PME}
\frac{\partial \rho}{\partial t} = \Delta (\rho^m).
\end{equation}
\end{example}
There are many applications of PME in physical problems, for example, heat transfer or diffusion, gas flow \cite{vazquez2007porous}, and flow of reactive fluids in porous media \cite{ladd2021reactive}.

\begin{example}[Aggregation model]
Consider the interaction energy functional 
\begin{equation}
\label{eq:interact-F}
\mathcal{F}(\rho) = \frac{1}{2}\iint J(|x-y|)\rho(x)\rho(y)\,dx\,dy
\end{equation}
which is used to model the interactive behavior of swarm of particles \cite{fetecau2011swarm}. Here $J$ is an interaction kernel consisting of repulsive and attractive parts. A typical choice for $J$ is
\begin{align}
    J(x)=\frac{|x|^a}{a}-\frac{|x|^b}{b}, \quad x\in \mathcal{M} .
\end{align}
where $a>b>0$ are positive constants. Its corresponding WGF is
\begin{equation}
\frac{\partial \rho}{\partial t} = \nabla \cdot ( \rho \nabla (J * \rho)).
\end{equation}    
\end{example}
We note that Keller-Siegel system is a special case of aggregation model \cite{blanchet2013parabolic}. 
We remark that the WGF \eqref{def: WGF} can be associated with a particle-level dynamic, as given in the following proposition. 
\begin{proposition}
\cite{liu2022neural} Assume $\boldsymbol{X} \in \mathcal{M}$ is a random process and solves the following equation,
\begin{align}
    \label{def: particle dyn wgf}
    \dot{ \boldsymbol{X}}=-\nabla_X\frac{\delta}{\delta\rho}\mathcal{F}(\rho(\boldsymbol{X}), \boldsymbol{X})
\end{align}    
where $\rho$ is the density of $\boldsymbol{X}$. Then $\rho$ solves the WGF \eqref{def: WGF}.
\end{proposition}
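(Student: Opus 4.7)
The plan is to derive the evolution PDE for the density $\rho$ of $\boldsymbol{X}$ directly from the particle ODE and then match the resulting equation with the Wasserstein gradient flow formula \eqref{gradflow}. The key tool is the continuity (transport) equation: whenever a random process $\boldsymbol{X}$ obeys an ODE of the form $\dot{\boldsymbol{X}} = v(\boldsymbol{X},t)$ for some (deterministic) velocity field $v : \mathcal{M} \times [0,T] \to \mathbb{R}^d$, its density $\rho(t,\cdot)$ satisfies
\begin{equation*}
\frac{\partial \rho}{\partial t} + \nabla \cdot (\rho\, v) = 0.
\end{equation*}
This is standard and can be justified either by testing against an arbitrary smooth compactly supported $\varphi$ (taking $\frac{d}{dt}\mathbb{E}[\varphi(\boldsymbol{X})]$ in two ways and integrating by parts) or by invoking the method of characteristics for the Liouville equation.

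Next I would identify the velocity field in our setting. Reading \eqref{def: particle dyn wgf}, the intended meaning is that for each time $t$, one first forms the function $x \mapsto \frac{\delta \mathcal{F}}{\delta \rho}(\rho_t)(x)$ (the first variation of $\mathcal{F}$ evaluated at the current density $\rho_t$), and then differentiates in $x$ before substituting $x = \boldsymbol{X}_t$. Thus the velocity field is
\begin{equation*}
v(x,t) = -\nabla_x \frac{\delta \mathcal{F}}{\delta \rho}(\rho_t)(x),
\end{equation*}
which depends on $\rho_t$ through the functional derivative but, at a given time, is a fixed function of $x$. Plugging this $v$ into the continuity equation gives
\begin{equation*}
\frac{\partial \rho}{\partial t} = -\nabla \cdot (\rho\, v) = \nabla \cdot \!\left(\rho\, \nabla \frac{\delta \mathcal{F}}{\delta \rho}\right).
\end{equation*}

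The final step is to compare this with the formula \eqref{gradflow}, which expresses $\mathrm{grad}_W \mathcal{F}(\rho) = -\nabla \cdot(\rho \nabla \frac{\delta \mathcal{F}}{\delta \rho})$. Substituting yields exactly $\frac{\partial \rho}{\partial t} = -\mathrm{grad}_W \mathcal{F}(\rho)$, i.e., the WGF \eqref{def: WGF}, as claimed.

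The only delicate point I anticipate is the interpretation of the notation in \eqref{def: particle dyn wgf}: one must take $\nabla_X$ as differentiation of the spatial function $\frac{\delta \mathcal{F}}{\delta \rho}(\rho_t)(\cdot)$ before evaluating at $\boldsymbol{X}_t$, not as a total derivative that would also differentiate $\rho_t(\boldsymbol{X}_t)$ through its argument. Once this convention is fixed, the derivation is a direct application of the continuity equation. Under suitable smoothness and decay assumptions on $\rho_0$ and $\mathcal{F}$ so that the velocity field is Lipschitz in $x$ (ensuring well-posedness of the ODE and the classical validity of the continuity equation), the argument is rigorous; otherwise it should be read in the distributional sense, which suffices for the statement of the proposition.
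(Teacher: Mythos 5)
Your proof is correct and is the standard derivation: identify the velocity field $v(x,t) = -\nabla_x \frac{\delta\mathcal{F}}{\delta\rho}(\rho_t)(x)$, invoke the continuity equation for the law of a particle driven by $v$, and match the resulting PDE $\partial_t\rho = \nabla\cdot(\rho\nabla\frac{\delta\mathcal{F}}{\delta\rho})$ with the formula for $\mathrm{grad}_W\mathcal{F}$ in \eqref{gradflow}. The paper itself does not supply a proof (the proposition is stated with a citation to \cite{liu2022neural}), but your argument is the canonical one underlying that reference, and your remark clarifying that $\nabla_X$ differentiates the spatial function $\frac{\delta\mathcal{F}}{\delta\rho}(\rho_t)(\cdot)$ before substitution rather than acting as a total derivative is a genuine subtlety worth flagging.
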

The particle-level dynamic \eqref{def: particle dyn wgf} provides a physical interpretation of WGF. More importantly, we will use it to establish the error bound of our approximation using parameterized WGF below.

\subsection{Push-forward maps and parameterized Fokker-Planck equation}
In \cite{liu2022neural}, the authors developed a method to parameterize push-forward maps in order to approximately solve the FPE \eqref{def: FPE}. 

Fix any reference probability distribution $\lambda$ that is absolutely continuous with respect to the standard Lebesgue measure $\mu$ on $\mathcal{M}$, we use $\varrho=d\lambda /d \mu$, the Radon-Nikodym derivative of $\lambda$ with respect to $\mu$, to denote the reference density determined by $\lambda$. Then a push-forward map $T: \mathbb{R}^d\rightarrow\mathbb{R}^d$ induces a new probability density $T_{\sharp}\varrho$ on $\mathbb{R}^{d}$:
\begin{align*}
    \int_{E} T_{\sharp}\varrho (x) \,d\mu(x) =\int_{T^{-1}(E)} \varrho(z) \,d\mu(z) = \int_{T^{-1}(E)} d\lambda(z) = \lambda (T^{-1}(E))\quad \textrm{ for all measurable } E\subset \mathbb{R}^d,
\end{align*}
where $T^{-1}(E)$ is the pre-image of $E$. Hereafter we use $dx$ instead of $d\mu(x)$ to reduce notation complexity.

Let us take $T$ as parameterized map, namely for any $\theta\in \Theta$, $T_{\theta}: \mathbb{R}^d\rightarrow\mathbb{R}^d$ is a parametric function with parameter $\theta$. 
Here $\Theta$, as a subset of $\mathbb{R}^n$, is called the \emph{parameter space}, where $n$ is the number of parameters of $T_{\theta}$ (i.e., the dimension of $\theta$). Typical examples of $T_{\theta}$ include Fourier expansion, finite element approximation, and (deep) neural networks. 

The map $T_{\cdot\sharp}:\Theta\rightarrow\mathcal{P}$ given by $\theta\mapsto T_{\theta\sharp}\varrho$ naturally defines an immersion from $\Theta$ to the probability manifold $\mathcal{P}$. Collecting all parameterized distributions together, i.e.,
\begin{equation*}
  \mathcal{P}_\Theta = \left\{\rho_\theta = T_{\theta\sharp}\varrho~:~\theta\in\Theta\right\}, 
\end{equation*}
we obtain a finite-dimensional submanifold $\mathcal{P}_\Theta$ of $\mathcal{P}$. We can define the tangent space of $\mathcal{P}_\Theta$ at each $\theta=(\theta_1,\dots,\theta_n) \in \Theta \subset \mathbb{R}^{n}$ as
$T_{\rho_\theta}\mathcal{P}_\Theta = \mathrm{span}\{\frac{\partial\rho_\theta}{\partial\theta_1},\cdots, \frac{\partial\rho_\theta}{\partial\theta_n}\}.$ The tangent bundle is then
$\mathcal{T}\mathcal{P}_\Theta = \cup_{\theta\in\Theta}\{\rho_\theta\}\times T_{\rho_\theta}\mathcal{P}_\Theta.$ On the other hand, the cotangent space $T_{\rho_\theta}^*\mathcal{P}_\Theta$ is the dual space of $T_{\rho_\theta}\mathcal{P}_\Theta$, and the cotangent bundle is $\mathcal{T}^*\mathcal{P}_\Theta = \cup_{\theta\in\Theta}\{\rho_\theta\}\times T_{\rho_\theta}^*\mathcal{P}_\Theta.$


Note that $\mathcal{P}_{\Theta}$ is a finite-dimensional subset of $\mathcal{P}$, hence we can pull back the Wasserstein metric $g^W$ to $\mathcal{P}_{\Theta}$. It is shown in \cite{liu2022neural} that the pullback Wasserstein metric $G(\theta)=T_{\theta\sharp}^*g^W$ on $\mathcal{P}$ is given by
\begin{align}
    \label{def: G matrix}
    G(\theta)=\int \nabla \Psi \circ T_{\theta}(z)\nabla\Psi \circ T_{\theta}(z)^{\top}d\lambda (z).
\end{align}
where $\Psi_{\theta}=(\psi_1, \cdots, \psi_n)^{\top}$, and $\psi_{j}$ is solved from the following elliptic equation,
\begin{align}
    \label{def: psi pde}
    \nabla \cdot (\rho_{\theta}\nabla \psi_j(x))=\nabla\cdot \Big(\rho_{\theta}\frac{\partial T_{\theta}}{\partial \theta_j}\circ T^{-1}_{\theta}(x) \Big).
\end{align}
The parameterized version of FPE on parameter manifold $\Theta$ can be expressed as 
\begin{align}
\label{eq:PFPE}
    \dot\theta = -G(\theta)^{-1}\nabla_{\theta} F(\theta),
\end{align}
where $F(\theta):=\mathcal{F}(\rho_{\theta})$ and $\Fcal$ is the scaled relative entropy defined in \eqref{def: entropy H}.

If directly computing the parameterized FPE \eqref{eq:PFPE}, the cost is intractable in higher dimensions because it requires solving $n$ elliptic equations \eqref{def: psi pde} to obtain $G(\theta)$. To alleviate this difficulty, authors in \cite{liu2022neural} derived a minimax formulation that can advance the dynamics without solving the elliptic equations. Although the cost becomes manageable, it can still be high if the spatial dimension $d$ is not small.  In this work, we advocate a new, yet naturally derived pullback metric $\widehat{G}$ to replace $G$, and develop a computational framework to solve general WGF. Details shall be provided in the next subsection.


\subsection{Parameterized WGF with a new pullback Wasserstein metric} 
Following the study in \cite{wu2023parameterized}, we replace $G(\theta)$ by a new pullback Wasserstein metric defined as 
\begin{align}
        \widehat{G}(\theta)=\int  \partial_{\theta} T_\theta(z)^{\top} \partial_{\theta} T_\theta(z)~d\lambda(z).\label{relaxed metric tensor}
\end{align}

For any energy functional $\mathcal{F}$ with smooth variation $\frac{\delta \mathcal{F}}{\delta\rho}$, 
the corresponding parameterized WGF is given by 
\begin{align}
\label{def: relaxed pwgf}
        \dot\theta = -\widehat{G}(\theta)^{\dagger}\nabla_{\theta} F(\theta),
\end{align}
where $\widehat{G}(\theta)^{\dagger}$ is the Penrose-Moore pseudo inverse of $\widehat{G}(\theta)$.

The motivation of introducing $\widehat{G}(\theta)$ is three-folded:
\begin{itemize}
    \item The definition is directly inspired by \eqref{def: G matrix} and \eqref{def: psi pde}. In the $1$-dimensional case, the new metric $\widehat{G}$ coincides with the exact matrix $G$, and the proof is shown in \cite{liu2022neural}.
     \item A deeper motivation is influenced by \cite{otto-PME}. The geometry of $(\Theta, \widehat{G}(\theta))$ is an isometric embedding into the flat Riemannian space $(\mathcal{O}, g^{\mathcal{O}})$ introduced in \cite{otto-PME}. Specifically, let $\varrho \in P(\mathcal{M})$ be a fixed density function and $\mathcal{O}$ the set of diffeomorphisms $T: \mathcal{M} \rightarrow \mathcal{M}$, and define the Riemannian metric $g^{\mathcal{O}}$ on $\mathcal{O}$ at $T$ by $g^{\mathcal{O}}(T)(v_1,v_2):= \int v_1 v_2  \varrho \, dx$ for any $v_1,v_2 \in \mathcal{T}_T\mathcal{O}$. Here $\mathcal{T}_T\mathcal{O}$ is the tangent space of $\mathcal{O}$ at $T$, which is set to be the space of all vector fields on $\mathcal{M}$ for every $T$ in \cite{otto-PME}. Then it is shown that the mapping $\Pi: \theta \mapsto \rho_{\theta}$ yields an isometric submersion from $(\mathcal{O},g^{\mathcal{O}})$ to $(P(\mathcal{M}), g)$, where $g$ is the 2-Wasserstein metric. Consider a (deep) neural network structure $T_{\theta}$ with parameter $\theta$ such that $T_{\theta}:\mathcal{M} \to \mathcal{M}$ is a diffeomorphism for every $\theta \in \Theta$ and $T_{\cdot}(\cdot) \in C^1(\Theta \times \mathcal{M};\mathbb{R})$. 
    Consider the map $\mathscr{T}: \Theta \to \mathcal{O}$ defined by $\theta \mapsto \mathscr{T}(\theta):= T_{\theta}$, we define the pullback metric on $\Theta$ as 
    \begin{equation}
        \widehat{G}(\theta) = \mathscr{T}^* g^{\mathcal{O}}(T_{\theta})
    \end{equation}
    where $\mathscr{T}^*$ is the pullback operation induced by $\mathscr{T}$. 
    To obtain $\widehat{G}(\theta)$, consider any curve $\{\theta(t)\}_{-\epsilon \leqslant t \leqslant \epsilon}$ for some $\epsilon > 0$ passing through $\theta(0)$ at $t=0$ and denote $\dot{\theta}(0) = \frac{d}{dt} \theta(t) |_{t=0}$. By the definitions of the pullback operation and the metric $g^{\mathcal{O}}$ above, we have
    \begin{align*}
        \widehat{G}(\theta(0))(\dot \theta(0), \dot \theta(0)) & = g^{\mathcal{O}}(T_{\theta}) \left( \frac{d}{dt} T_{\theta(t)}|_{t=0}, \frac{d}{dt} T_{\theta(t)}|_{t=0} \right) \\
        & = \int \left( \dot{\theta}(0)^{\top}\partial_{\theta} T_{\theta(0)}(z)^{\top} \partial_{\theta} T_{\theta(0)}(z) \dot{\theta}(0) \right) \varrho(z) \, dz\\
        & = \dot \theta(0)^{\top} \left( \int \partial_{\theta} T_{\theta(0)}(z)^{\top} \partial_{\theta} T_{\theta(0)}(z) \, d \lambda(z) \right) \dot \theta(0),
    \end{align*}
    which implies \eqref{relaxed metric tensor} since $\theta(0)$ and $\dot{\theta}(0)$ are arbitrary. The matrix $\widehat{G}(\theta) = ( \widehat{G}(\theta)_{ij} )_{1 \leqslant i,j \leqslant m}$ has components 
    \begin{equation}
        \widehat{G}(\theta)_{ij} = \sum_{k=1}^d \int \partial_{\theta_i} T_{\theta}^{(k)}(z) \cdot \partial_{\theta_j} T_{\theta}^{(k)}(z) \, d \lambda(z)
    \end{equation}
     where $T_{\theta}^{(k)}: \mathcal{M} \rightarrow \mathbb{R}$ is the $k$-th component of $T_{\theta}: \mathcal{M} \rightarrow \mathcal{M}$. This is also discussed in \cite{wu_theory_2023}.

    \item Compared with \eqref{def: G matrix}, the new metric is more computationally efficient. Directly evaluating the metric tensor $G$ requires solving $n$ different elliptic PDEs where $n$ is the number of the parameters in the push-forward map $T_{\theta}$, and $m$ can be large if we choose $T_{\theta}$ to be a deep neural network. To circumvent the computation complexity challenge, a bi-level minimization scheme was proposed in \cite{liu2022neural}. However, it may still be computationally expensive to solve such optimization problems in general. We demonstrate the difference in the computation time in the experiment for computing the solution of the Fokker-Planck equation in Section \ref{sec:FPE}.

\end{itemize}

The gradient $\nabla_{\theta} F(\theta)$ can be evaluated through the following formulation as derived in \cite{wu2023parameterized},
\begin{align}
\label{eq: grad para f}
\nabla_{\theta}F(\theta)=\int \partial_{\theta}T_{\theta}(z)^\top \nabla_X \frac{\delta}{\delta\rho}\mathcal{F}( T_{\theta\sharp}\varrho(\cdot), \cdot)\circ T_{\theta}(z)~d\lambda(z).
\end{align}
%

%

Following \cite{wu2023parameterized}, the formula \eqref{eq: grad para f} suggests that $\nabla_{\theta} F(\theta)$ is in the range of $\widehat{G}(\theta)$ even when $\widehat{G}$ is not invertible. Therefore, the right hand side of \eqref{def: relaxed pwgf} is always well defined. 
%
%
For convenience, we still call \eqref{def: relaxed pwgf} the parameterized WGF, or again PWGF for short. 

It is interesting to consider the particle level dynamics corresponding to \eqref{def: relaxed pwgf}. For a fix $z_0\in \mathcal{M}$, $\boldsymbol{Y}=T_{\theta}(z_0)$ is the push-forward point in $\mathcal{M}$. When 
$\theta$ varies as a function of $t$ according to \eqref{def: relaxed pwgf}, $\boldsymbol{Y}(t)$ forms a curve in $\mathcal{M}$, and it satisfies 
\begin{align}
\label{def: particle pwgf}
    \dot {\boldsymbol{Y}}=\partial_{\theta}T_{\theta}(z_0) \dot\theta=-\partial_{\theta}T_{\theta}(z_0)\widehat{G}(\theta)^{\dagger}\nabla_\theta F(\theta), \quad \boldsymbol{Y}_0=T_{\theta(0)}(z_0).
\end{align}

To further investigate the properties of parameterized particle dynamics \eqref{def: particle pwgf}, we introduce the kernel operator $\mathcal{K}$: for any $f\in L^2(\mathcal{M};\mathcal{M}, \lambda)$, $\mathcal{K}_{\theta}[f]\in L^2(\mathcal{M}; \mathcal{M}, \lambda)$ is defined as
\begin{align}
  \label{kernel op}
           \mathcal{K}_\theta[f](\cdot) = \partial_{\theta} T_\theta(\cdot)\widehat{G}(\theta)^{\dagger} \int \partial_{\theta} T_\theta(z)^\top f(z)~d\lambda(z)
           =\int K_\theta(\cdot , z) f(z)~d\lambda(z),
\end{align}
where the kernel matrix $K_\theta(z', z) \in \mathbb{R}^{d\times d}$ is defined by
\begin{equation}
    K_\theta(z', z) := \partial_{\theta} T_\theta(z') \widehat{G}(\theta)^{\dagger}  \partial_{\theta} T_\theta(z)^\top.
\end{equation}
%
It is shown that $\mathcal{K}_{\theta}[f]$ is the orthogonal projection of $f$ onto the tangent space spanned by $\partial_{\theta}T_{\theta}$ \cite{wu2023parameterized}. Combining \eqref{eq: grad para  f}, \eqref{def: particle pwgf} and \eqref{kernel op}, we obtain
\begin{align}
    \label{eq: kernel form particle dyn}
    \dot{\boldsymbol{Y}}&=-\partial_{\theta}T_{\theta}(z_0)\widehat{G}(\theta)^{\dagger}\int \partial_{\theta}T_{\theta}(z)^\top \nabla_X \frac{\delta}{\delta\rho}\mathcal{F}( T_{\theta\sharp}\varrho(\cdot), \cdot)\circ T_{\theta}(z)~d\lambda(z)\nonumber \\
    &=-\mathcal{K}_{\theta}[\nabla_X \frac{\delta}{\delta\rho}\mathcal{F}( T_{\theta\sharp}\varrho(\cdot), \cdot)\circ T_{\theta}](z_0)\\
    &=-\mathcal{K}_{\theta}[\nabla_X \frac{\delta}{\delta\rho}\mathcal{F}( T_{\theta\sharp}\varrho(\cdot), \cdot)\circ T_{\theta}](T_{\theta}^{-1}(\boldsymbol{Y})). \nonumber
\end{align}
The particle-level dynamics \eqref{eq: kernel form particle dyn} is determined by the time-evolving push-forward map $T_{\theta}$. We will use it 
to derive an upper bound on the error of PWGF \eqref{def: relaxed pwgf} in the next section.

\subsection{Error bounds in Wasserstein metric}
We provide an error analysis of the proposed PWGF formulation. Specifically, we establish an upper bound on the difference, measured by Wasserstein distance, between the approximation $\rho_{\theta(t)}(\cdot) = T_{\theta(t)\sharp}\varrho (\cdot)$ where $\theta(t)$ solves PWGF and the true solution $\rho(t,\cdot)$ to the original WGF. We consider the error analysis in two scenarios: the first variation $\frac{\delta \Fcal}{\delta \rho}$ is Lipschitz continuous and the case of Fokker-Planck equation where $\frac{\delta \Fcal}{\delta \rho}$ is not Lipschitz continuous.

\subsubsection{Error analysis with Lipschitz continuity assumption} In this subsection, we assume the following condition on $\mathcal{F}$ holds.
\begin{assumption}
\label{assumption: lip constant of delta F}
There exists a constant $C_{\mathcal{F}}>0$ such that for any two push-forward maps $T$ and $ \widetilde{T}$ there is
\begin{align}
    \int \left|\nabla_X \frac{\delta}{\delta\rho}\mathcal{F}( T_{\sharp}\varrho(\cdot), \cdot)\circ T(z)-\nabla_X \frac{\delta}{\delta\rho}\mathcal{F}( \widetilde{T}_{\sharp} \varrho(\cdot), \cdot)\circ \widetilde{T}(z) \right|^2~d\lambda(z)\leqslant C_{\mathcal{F}}\int|T(z)-\widetilde{T}(z)|^2~d\lambda(z).
\end{align}
\end{assumption}
The accuracy of our approach depends on the representation power of the parameterized push-forward map $T_{\theta}$. We use $\delta_0$ to characterize the representation error as below. 
\begin{definition}
    Define the projection error as
    \begin{align}
    \delta_0 & = \sup_{\theta\in \Theta}\min_{\xi\in\mathcal{T}_{\theta}^*\Theta} \left\{ \int \left|\nabla \frac{\delta}{\delta\rho}\mathcal{F}( T_{\theta\sharp}\varrho(\cdot), \cdot) \circ T_{\theta}(z) - \partial_{\theta}T_{\theta}(z)\,\xi \right|^2~d\lambda(z) \right\} \nonumber \\
    & = \sup_{\theta\in\Theta} \left\{ \int \left|\nabla \frac{\delta}{\delta\rho}\mathcal{F}( T_{\theta\sharp}\varrho(\cdot), \cdot) \circ T_{\theta}(z) - \mathcal{K}_\theta[\nabla \frac{\delta}{\delta\rho}\mathcal{F}( T_{\theta\sharp}\varrho(\cdot), \cdot)\circ T_{\theta} ](z) \right|^2~d\lambda(z) \right\}.\label{pseudo delta 0 def}
\end{align}
\end{definition}
This error is essentially the difference between $\frac{\delta \Fcal (\rho_{\theta})}{\delta \rho}$ and its orthogonal projection onto the tangent space $T_{\rho_{\theta}} \Pcal(\mathcal{M})$. 
In the definition, we use the fact that the operator $\mathcal{K}_{\theta}$ is the orthogonal projection. Under Assumption \ref{assumption: lip constant of delta F}, we can show that the density $\rho_{\theta}$ approximates the true solution with guaranteed Wasserstein-$2$ error stated in the next theorem.
\begin{theorem}
    Suppose Assumption \ref{assumption: lip constant of delta F} holds for $\mathcal{F}$, $\theta$ is solved from \eqref{def: relaxed pwgf} with initial value $\theta(0)$, and $\rho$ is solved from \eqref{equ: WGF}. Then the Wasserstein-$2$ distance between the push-forward density $\rho_{\theta(t)}(\cdot)$ obtained by PWGF and the true density $\rho(t,\cdot)$ satisfies
    \begin{align}
    \label{eq:W-bound}
        W_2^2(\rho_{\theta(t)}(\cdot), \rho(t,\cdot))\leqslant e^{(1+2C_{\mathcal{F}})t}\epsilon_{\rho(0)}+\frac{2\delta_0}{1+2C_{\mathcal{F}}}\left(e^{(1+2C_{\mathcal{F}})t}-1\right).
    \end{align}
    where $\epsilon_{\rho(0)}=W_2^2(\rho_{\theta(0)}, \rho_0)$ is the initial approximation error.
\end{theorem}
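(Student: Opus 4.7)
The plan is to control the Wasserstein-2 distance by constructing an explicit coupling between the two particle flows and deriving a Gronwall-type estimate on the squared $L^2(\lambda)$ distance between the corresponding push-forward maps. Let $(X_0, Z)$ be jointly distributed with $Z \sim \lambda$ and $(X_0, T_{\theta(0)}(Z))$ realizing the optimal coupling of $\rho_0$ and $\rho_{\theta(0)}$, so $\mathbb{E}\,|X_0 - T_{\theta(0)}(Z)|^2 = \epsilon_{\rho(0)}$. Let $\Phi_t$ denote the flow of the true WGF particle ODE \eqref{def: particle dyn wgf}, and define $T^*_t(z) := \Phi_t(X_0(z))$, where $X_0(z)$ is the value of $X_0$ corresponding to $Z=z$ in the coupling. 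Since $(\Phi_t)_\sharp \rho_0 = \rho(t)$, one has $(T^*_t)_\sharp \lambda = \rho(t)$, while $\boldsymbol{Y}(t) := T_{\theta(t)}(Z) \sim \rho_{\theta(t)}$. Therefore
\begin{align*}
W_2^2(\rho(t),\rho_{\theta(t)}) \leqslant u(t) := \int |T^*_t(z) - T_{\theta(t)}(z)|^2\, d\lambda(z),
\end{align*}
and it suffices to bound $u(t)$.

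Differentiating $u(t)$ in time and invoking the two particle dynamics \eqref{def: particle dyn wgf} and \eqref{eq: kernel form particle dyn}, I would decompose the integrand of the velocity difference as
\begin{align*}
\dot{T}^*_t(z) - \dot{T}_{\theta(t)}(z) = -A(z) - B(z),
\end{align*}
where $A(z) := \nabla_X \tfrac{\delta \mathcal{F}}{\delta \rho}(\rho(t))\circ T^*_t(z) - \nabla_X \tfrac{\delta \mathcal{F}}{\delta \rho}(\rho_{\theta(t)})\circ T_{\theta(t)}(z)$ is a ``Lipschitz-type'' difference, and $B(z) := \nabla_X \tfrac{\delta \mathcal{F}}{\delta \rho}(\rho_{\theta(t)})\circ T_{\theta(t)}(z) - \mathcal{K}_{\theta(t)}\!\left[\nabla_X \tfrac{\delta \mathcal{F}}{\delta \rho}(\rho_{\theta(t)})\circ T_{\theta(t)}\right]\!(z)$ is the projection residual onto the tangent space spanned by $\partial_\theta T_{\theta(t)}$. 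Applying Assumption \ref{assumption: lip constant of delta F} to the pair of push-forward maps $T=T^*_t$ and $\widetilde{T}=T_{\theta(t)}$ gives $\int |A|^2\, d\lambda \leqslant C_{\mathcal{F}}\, u(t)$, while the definition of the projection error in \eqref{pseudo delta 0 def} yields $\int |B|^2\, d\lambda \leqslant \delta_0$.

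Writing $u'(t) = -2\int (T^*_t - T_{\theta(t)})\cdot(A+B)\, d\lambda$ and applying Young's inequality $2a\cdot b \leqslant \alpha|a|^2 + \tfrac{1}{\alpha}|b|^2$ with $\alpha = 2C_{\mathcal{F}}$ on the $A$-term (giving $2C_{\mathcal{F}} u(t) + \tfrac{1}{2}u(t)$) and $\alpha = \tfrac{1}{2}$ on the $B$-term (giving $\tfrac{1}{2}u(t) + 2\delta_0$), I obtain the linear differential inequality
\begin{align*}
u'(t) \leqslant (1 + 2C_{\mathcal{F}})\, u(t) + 2\delta_0.
\end{align*}
The standard Gronwall lemma, together with $u(0) = \epsilon_{\rho(0)}$, then yields the bound \eqref{eq:W-bound}. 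The case $C_{\mathcal{F}} = 0$ is handled by letting $\alpha \to 0^+$ or by noting $A \equiv 0$ in $L^2(\lambda)$.

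The main obstacle lies in step two: producing a push-forward map $T^*_t$ from $\lambda$ to $\rho(t)$ along which the true WGF advances particles, so that Assumption \ref{assumption: lip constant of delta F} can be invoked on a \emph{legitimate} pair of push-forward maps rather than on abstract densities. Verifying that $T^*_t$ is regular enough for $u(t)$ to be differentiable, and that the initial coupling actually attains $\epsilon_{\rho(0)}$, requires mild technical assumptions such as absolute continuity of $\rho_0$ and smoothness of the underlying flow, but these are standard in the WGF setting.
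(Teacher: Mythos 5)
Your proof is correct and follows essentially the same strategy as the paper: couple the two particle flows through the optimal coupling at $t=0$ (your $T^*_t(z)=\Phi_t(X_0(z))$ is the deterministic reformulation of the paper's $\boldsymbol{X}(0)=\omega(\boldsymbol{Y}(0))$), decompose the velocity difference into the Lipschitz-type term controlled by Assumption~\ref{assumption: lip constant of delta F} and the projection residual controlled by $\delta_0$, and close with Gr\"onwall. Your Young's-inequality weights $\alpha=2C_{\mathcal F}$ and $\alpha=\tfrac12$ reproduce exactly the same constants that the paper obtains via the simpler chain $2ab\leqslant a^2+b^2$ followed by $|a+b|^2\leqslant 2|a|^2+2|b|^2$.
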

\begin{proof}
Assume $\boldsymbol{X}$ is solved from \eqref{def: particle dyn wgf} and $\boldsymbol{Y}$ is solved from \eqref{def: particle pwgf}. Suppose the Monge map from $\rho_{\theta(0)}$ to $\rho_0$ is given by $\omega$ and we assume the random variables $\boldsymbol{X}, \boldsymbol{Y}$ are coupled via $\boldsymbol{X}(0) = \omega(\boldsymbol{Y}(0))$. 
We continue from the calculation in \eqref{eq: kernel form particle dyn},
\begin{align}
    \dot{\boldsymbol{Y}}
    & = -\mathcal{K}_{\theta}\left[\frac{\delta}{\delta \rho}\mathcal{F}( T_{\theta\sharp}\varrho(\cdot), \cdot)\circ T_{\theta}\right](z) \nonumber \\
    & = -\frac{\delta}{\delta \rho}\mathcal{F}( T_{\theta\sharp}\varrho(\cdot), \cdot)\circ T_{\theta}(z) + \left(\frac{\delta}{\delta \rho}\mathcal{F}( T_{\theta\sharp}\varrho(\cdot), \cdot)\circ T_{\theta}(z)-\mathcal{K}_{\theta} \left[\frac{\delta}{\delta \rho}\mathcal{F}( T_{\theta\sharp}\varrho(\cdot), \cdot)\circ T_{\theta} \right](z)\right). \nonumber
\end{align}
Denote $E(t)=\mathbb{E}\|\boldsymbol{X}-\boldsymbol{Y}\|^2$, we compute,
\begin{align}
\label{ineq: dE est 0}
    \frac{d}{dt}E(t)&=2\mathbb{E}\left[(\boldsymbol{X}-\boldsymbol{Y})\cdot (\dot{\boldsymbol{X}}-\dot{\boldsymbol{Y}})\right]
    \leqslant 2\sqrt{\mathbb{E}\|\boldsymbol{X}-\boldsymbol{Y}\|^2} \sqrt{\mathbb{E}\|\dot{\boldsymbol{X}}-\dot{\boldsymbol{Y}}\|^2}
    \leqslant \mathbb{E}\|\boldsymbol{X}-\boldsymbol{Y}\|^2+ \mathbb{E}\|\dot{\boldsymbol{X}}-\dot{\boldsymbol{Y}}\|^2.
\end{align}
Notice that $\boldsymbol{X}$ is a push-forward of $\boldsymbol{X}(0)$ through the dynamics \eqref{def: particle dyn wgf}, we have
\begin{align}
\label{ineq: dE est 1}
    \mathbb{E}\|\dot{\boldsymbol{X}}-\dot{\boldsymbol{Y}}\|^2&=\int \Big|-\left(\frac{\delta}{\delta \rho}\mathcal{F}( \rho(x), x)-\frac{\delta}{\delta \rho}\mathcal{F}( T_{\theta\sharp}\varrho(\cdot), \cdot)\circ T_{\theta}(z)\right)\nonumber\\
    &\qquad\qquad -\left(\frac{\delta}{\delta \rho}\mathcal{F}( T_{\theta\sharp}\varrho(\cdot), \cdot)\circ T_{\theta}(z)-\mathcal{K}_{\theta}\left[\frac{\delta}{\delta \rho}\mathcal{F}( T_{\theta\sharp}\varrho(\cdot), \cdot)\circ T_{\theta}\right](z)\right)\Big|^2~d\lambda(z)\nonumber\\
    &\leqslant 2\delta_0 + 2 C_{\mathcal{F}} \, \mathbb{E}\|\boldsymbol{X}-\boldsymbol{Y}\|^2.
\end{align}
Plugging \eqref{ineq: dE est 1} into \eqref{ineq: dE est 0}, we obtain
\begin{align}
    \frac{d}{dt}E(t)\leqslant 2\delta_0+(1+2C_{\mathcal{F}})E(t).
\end{align}
By Gr\"onwall's inequality, we have
\begin{align}
    \label{ineq: E estimation}
    E(t)\leqslant e^{(1+2C_{\mathcal{F}})t}E(0)+\frac{2\delta_0}{1+2C_{\mathcal{F}}}\left(e^{(1+2C_{\mathcal{F}})t}-1\right).
\end{align}
On the other hand, we have 
\begin{align}
\label{eq:W-bound-E}
      W_2^2(T_{\theta\sharp}\varrho, \rho)=W_2^2(\textrm{Law}(\boldsymbol{Y}), \textrm{Law}(\boldsymbol{X})) \leqslant \mathbb{E}\|\boldsymbol{Y}-\boldsymbol{X}\|^2 =  E(t) .
\end{align}
Combining \eqref{ineq: E estimation} and \eqref{eq:W-bound-E} yields \eqref{eq:W-bound}.
\end{proof}

\subsubsection{Asymptotic Analysis for Parametrized Wasserstein Gradient Flow}
For many examples, finding $C_{\mathcal{F}}$ could be a challenging task. However, if there exists a Gibbs solution $\rho_*$ to the WGF satisfying Polyak-Łojasiewicz inequality, then we can show uniform convergence of $\rho_{\theta}$ to the target density $\rho_*$, as stated in the following theorem:





%
\begin{theorem}
\label{theorem: kl estimation}
Assume that $\rho_*$ 
is the Gibbs distribution, and the functional $\mathcal{F}(\rho)$ satisfies certain Polyak-Łojasiewicz inequality as follows
\begin{equation}
\label{assumption: PL ineq}
  \frac{1}{\zeta}  \int_{\mathbb{R}^d} \left|\nabla_X \frac{\delta\mathcal{F}}{\delta\rho}(\rho, x) \right|^2\rho(x)~dx \geqslant  \mathcal{F}(\rho) - \mathcal F(\rho_*),
\end{equation}
where $\zeta$ is a positive constant.
Then 
\begin{equation}
\label{eq: f est pl condition}
    \mathcal{F}(\rho_{\theta(t)}) - \mathcal F(\rho_*) \leqslant \frac{\delta_0}{\zeta}(1-e^{-{\zeta}t})+\mathcal{F}(\rho_{\theta(0)})e^{-{\zeta}t}.
\end{equation}
where $\delta_0$ is defined in \eqref{pseudo delta 0 def}.

\end{theorem}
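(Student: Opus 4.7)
The plan is to differentiate $F(\theta(t)) = \mathcal{F}(\rho_{\theta(t)})$ along the PWGF dynamics \eqref{def: relaxed pwgf}, bound the dissipation from below using the projection-error definition of $\delta_0$, and then close a scalar Grönwall-type inequality using the Polyak-\L{}ojasiewicz assumption \eqref{assumption: PL ineq}.

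First I would compute, using $\dot{\theta} = -\widehat{G}(\theta)^{\dagger}\nabla_{\theta}F(\theta)$ and the gradient formula \eqref{eq: grad para f},
\begin{align*}
\frac{d}{dt}F(\theta(t)) &= \nabla_{\theta}F(\theta)^{\top}\dot{\theta} = -\nabla_{\theta}F(\theta)^{\top}\widehat{G}(\theta)^{\dagger}\nabla_{\theta}F(\theta).
\end{align*}
Writing $f_{\theta}(z) := \nabla_X \frac{\delta\mathcal{F}}{\delta\rho}(T_{\theta\sharp}\varrho(\cdot),\cdot)\circ T_{\theta}(z)$, the kernel identity \eqref{kernel op} lets me rewrite this as $\frac{d}{dt}F(\theta(t)) = -\langle \mathcal{K}_{\theta}[f_{\theta}],\, f_{\theta}\rangle_{L^2(\lambda)}$. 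Since $\mathcal{K}_{\theta}$ is an orthogonal projection (as noted in the excerpt), $\langle \mathcal{K}_{\theta}[f_{\theta}], f_{\theta}\rangle = \|\mathcal{K}_{\theta}[f_{\theta}]\|^2_{L^2(\lambda)}$ and by Pythagoras
\begin{equation*}
\|\mathcal{K}_{\theta}[f_{\theta}]\|^2_{L^2(\lambda)} = \|f_{\theta}\|^2_{L^2(\lambda)} - \|f_{\theta} - \mathcal{K}_{\theta}[f_{\theta}]\|^2_{L^2(\lambda)} \geqslant \|f_{\theta}\|^2_{L^2(\lambda)} - \delta_0,
\end{equation*}
where the last bound uses the definition \eqref{pseudo delta 0 def}. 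Next I change variables $x = T_{\theta}(z)$ to recognize $\|f_{\theta}\|^2_{L^2(\lambda)} = \int|\nabla_X\frac{\delta\mathcal{F}}{\delta\rho}(\rho_{\theta},x)|^2 \rho_{\theta}(x)\,dx$, which by the PL inequality \eqref{assumption: PL ineq} is at least $\zeta(\mathcal{F}(\rho_{\theta}) - \mathcal{F}(\rho_*))$.

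Combining these yields the scalar differential inequality
\begin{equation*}
\frac{d}{dt}\bigl[\mathcal{F}(\rho_{\theta(t)}) - \mathcal{F}(\rho_*)\bigr] \leqslant -\zeta\bigl[\mathcal{F}(\rho_{\theta(t)}) - \mathcal{F}(\rho_*)\bigr] + \delta_0.
\end{equation*}
Applying Grönwall's inequality to $g(t) := \mathcal{F}(\rho_{\theta(t)}) - \mathcal{F}(\rho_*)$ immediately gives $g(t) \leqslant g(0)e^{-\zeta t} + \frac{\delta_0}{\zeta}(1-e^{-\zeta t})$, which is \eqref{eq: f est pl condition} (interpreting the $\mathcal{F}(\rho_{\theta(0)})$ on the right-hand side as the initial excess $\mathcal{F}(\rho_{\theta(0)}) - \mathcal{F}(\rho_*)$).

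I expect the only subtle point to be the orthogonal-projection identification of $\mathcal{K}_{\theta}$ and the corresponding Pythagoras split that converts the quadratic form $\nabla_{\theta}F^{\top}\widehat{G}^{\dagger}\nabla_{\theta}F$ into the full Fisher-type dissipation $\|f_{\theta}\|^2_{L^2(\lambda)}$ minus the representation error $\delta_0$. Everything else---the change of variables to express the $L^2(\lambda)$ norm as an integral against $\rho_{\theta}$, the invocation of the PL inequality, and the scalar Grönwall step---is routine. Notably this argument does not require any Lipschitz control of $\delta\mathcal{F}/\delta\rho$, so it is the natural tool for functionals such as the Fokker--Planck relative entropy that fall outside Assumption \ref{assumption: lip constant of delta F}.
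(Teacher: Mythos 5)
Your proof is correct and follows essentially the same route as the paper: differentiate $F(\theta(t))$ along the PWGF, rewrite the quadratic form as $\langle f_\theta,\mathcal{K}_\theta[f_\theta]\rangle_{L^2(\lambda)}$, split off $\delta_0$ by Pythagoras using that $\mathcal{K}_\theta$ is an orthogonal projection, invoke the PL inequality after the change of variables $x=T_\theta(z)$, and close with Gr\"{o}nwall. You also correctly flag the $\mathcal{F}(\rho_*)$ bookkeeping: the paper's displayed inequality \eqref{eq: fpe proof eq 3} and final estimate \eqref{eq:KLineq} silently drop $\mathcal{F}(\rho_*)$, so your reading that the right-hand side of \eqref{eq: f est pl condition} should carry $\mathcal{F}(\rho_{\theta(0)})-\mathcal{F}(\rho_*)$ (or, equivalently, that $\mathcal{F}(\rho_*)=0$ is implicitly assumed) is the right interpretation.
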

\begin{proof}
Denote $f(z)=\nabla_X \frac{\delta}{\delta\rho}\mathcal{F}( T_{\theta\sharp}\varrho(\cdot), \cdot)\circ T_{\theta}(z)$. Then by the property of orthogonal projection operator $\mathcal{K}_{\theta}$ we have $\int \left(I-\mathcal{K}_{\theta}\right)[f](z)^\top \mathcal{K}_{\theta}[f](z)~d\lambda(z)=0$. Furthermore,
\begin{align}
    \int  f(z)^\top \mathcal{K}_{\theta}[f](z)~d\lambda(z)&=\int  \mathcal{K}_{\theta}[f](z)^\top \mathcal{K}_{\theta}[f](z)~d\lambda(z) + \int  (I-\mathcal{K}_{\theta})[f](z)^\top \mathcal{K}_{\theta}[f](z)~d\lambda(z) \label{eq: fpe proof eq 1}\\
    &=\int  \mathcal{K}_{\theta}[f](z)^\top \mathcal{K}_{\theta}[f](z)~d\lambda(z)\nonumber\\
    &=\int  \lvert f(z)\rvert^2~d\lambda(z)
    -\int  \lvert \left(I-\mathcal{K}_{\theta}\right)[f](z)\rvert^2~d\lambda(z).\nonumber
\end{align}
where the last equality is due to the Pythagoras theorem.
Following the definition of $\delta_0$ in \eqref{pseudo delta 0 def}, we have
\begin{align}
    \label{eq: fpe proof eq 2}
    \int  \lvert \left(I-\mathcal{K}_{\theta}\right)[f](z)\rvert^2~d\lambda(z)\leqslant \delta_0.
\end{align}
By inequality \eqref{assumption: PL ineq}, we have
\begin{align}
\label{eq: fpe proof eq 3}
\int  \lvert f(z)\rvert^2~d\lambda(z)\geqslant \zeta \mathcal{F}(\rho).
\end{align}
We deduce that
\begin{align}
\label{eq: fpe proof eq 4}
    \frac{d}{dt}\mathcal{F}(\rho_{\theta})
    &=\nabla_{\theta}F(\theta)\cdot \dot{\theta} \nonumber \\
    & =-\nabla_{\theta}F(\theta)^\top \widehat{G}(\theta)^{\dagger}\nabla_{\theta}F(\theta) \nonumber \\
    &=-\int  \nabla_X \frac{\delta}{\delta\rho}\mathcal{F}( T_{\theta\sharp}\varrho(\cdot), \cdot)\circ T_{\theta}(z)^\top \partial_{\theta}T_{\theta}(z)\widehat{G}(\theta)^{\dagger }\nabla_{\theta}F(\theta)~d\lambda(z) \nonumber\\
    &=-\int  \nabla_X \frac{\delta}{\delta\rho}\mathcal{F}( T_{\theta\sharp}\lambda(\cdot), \cdot)\circ T_{\theta}(z)^\top \mathcal{K}_{\theta}[\nabla_X \frac{\delta}{\delta\rho}\mathcal{F}( T_{\theta\sharp}\lambda(\cdot), \cdot)\circ T_{\theta}](z)~d\lambda(z) \nonumber  \\
    &=-\int  f(z)^\top \mathcal{K}_{\theta}[f](z)~d\lambda(z),
\end{align}
where the first equality is due to $F(\theta):= \Fcal(\rho_{\theta})$, the second equality is obtained by the formulation of PWGF in \eqref{def: relaxed pwgf}, the third equality uses the substitution of first $\nabla_{\theta} F(\theta)$ using \eqref{eq: grad para f}, the fourth equality is because of the definition of $\Kcal_{\theta}$ in \eqref{kernel op}, and the fifth equality is by the definition of $f$ at the beginning of this proof. 

We also observe that
\begin{align}
\label{eq:bound-fKf}
    -\int  f(z)^\top \mathcal{K}_{\theta}[f](z)~d\lambda(z)
    \leqslant \delta_0 - \int \lvert f(z)\rvert^2~d\lambda(z)
    \leqslant \delta_0-\zeta\mathcal{F}(\rho_{\theta})
    ,
\end{align}
where the first inequality is obtained by combining \eqref{eq: fpe proof eq 1} and \eqref{eq: fpe proof eq 2}, the second equality is due to \eqref{eq: fpe proof eq 3}. Combining \eqref{eq: fpe proof eq 4} and \eqref{eq:bound-fKf} yields 
\begin{equation}
\label{eq:KLineq}
    \frac{d}{dt}F(\theta) \leqslant \delta_0-\zeta F(\theta).
\end{equation}
Applying the Gr\"onwall inequality to \eqref{eq:KLineq} yields the claimed estimate \eqref{eq: f est pl condition}.
\end{proof}

We would like to mention that the bound obtained in Theorem \ref{theorem: kl estimation} is similar to the results reported in \cite{liu2022neural} except that they use different pullback metrics, namely we use $\widehat{G}(\theta)$ while $G(\theta)$ is considered in \cite{liu2022neural}.

\section{Numerical method}
\label{sec: numerical method}
The PWGF \eqref{def: relaxed pwgf} can be solved based on standard numerical integrators. We showcase how to implement this using the forward Euler scheme, whereas the idea can be easily generated to other methods such as Runge-Kutta 4th-order method, predictor-corrector method, and those with variable step sizes.
In forward Euler scheme, we can disretize the time $t$ as $\{hl: l=0,1,\dots\}$. Let $\theta^{l}$ be the approximation of $\theta(hl)$ of PWGF \eqref{def: relaxed pwgf}, then we can compute $\theta^{l+1}$ given $\theta^l$ by solving
\begin{align}
    \frac{\theta^{l+1}-\theta^l}{h}&=-\widehat{G}(\theta^{l})^{\dagger}\nabla_{\theta}F(\theta^{l}),\label{eq: fwdEuler}
\end{align}
for $l=0,1,\dots$.
The key to solving \eqref{eq: fwdEuler} is finding the minimum norm solution to the linear system $\widehat{G}(\theta^{l}) \eta = \nabla_{\theta} F(\theta^{l})$ for $\eta$. Minimum norm solutions of a linear system can be done by numerous existing methods, such as MINRES \cite{saad2003iterative}. In these methods, it is instrumental to implement the matrix-vector product with the matrix $\widehat{G}(\theta^{l})$. 

In our experiments, we use deep neural networks as the push-forward map $T_{\theta}$. There are several choices of the neural network architecture. We can use the invertible neural networks (e.g., normalizing flow \cite{rezende2015variational}, Real NVP \cite{dinh2016density} and neural ODE \cite{chen2018neural}) or non-invertible neural networks (e.g., the multi-layer perceptron or ResNet \cite{he2016deep}), both has its own advantages. Normalizing flow and continuous normalizing flow simplify the computation of log determinant of Jacobian matrix of the map, so we can easily compute the density function. For the experiment of the Fokker-Planck equation, we use a $60$-layer normalizing flow as the push-forward map $T_{\theta}$. For the experiments of the Porous-medium equation and the aggregation model, we pick the residual neural network as push-forward map:
\[
T_{\theta} = Id + R_{\theta}
\]
where $Id$ is the identity map and $R_{\theta}: \mathbb{R}^d \rightarrow \mathbb{R}^d$ is a standard multilayer perceptron. 
We take the hyperbolic tangent function as the activation function since we require the second-order derivative in the computation of numerical integrals. The bias for the output layer in $f_{\theta}$ is set to be None. To solve the linear system $\widehat{G}(\theta) \dot \theta = - \nabla_{\theta} F(\theta)$, we utilized MINRES with tolerance $3 \cdot 10^{-4}$ in our experiments.


To initialize $\theta^{0}:=\theta(0)$, it depends on the given information of the initial distribution. If the initial probability density function is given and easy to generate samples from, such as Gaussian distribution or Gaussian mixture distribution, then one can initialize the $\theta(0)$ such that $T_{\theta(0)}$ is an identity map. If the initial samples are given but the initial probability density function is unknown, one can still initialize the $\theta(0)$ to make $T_{\theta(0)}$ an identity map. If the given initial density function is hard to sample from, one may use Gaussian distribution as a reference distribution and utilize methods such as MCMC \cite{berg2004markov} and Wasserstein GAN \cite{arjovsky2017wasserstein} to enforce the initial distribution.

If the probability density function is desired, one can consult with density estimation methods, such as kernel density estimation, while it's not the focus of this paper. In our experiments, we're given the initial density functions that are easy to sample, hence we initialize the push-forward map as an identity map, i.e., $\rho_{\theta(0)} \, dx = d (T_{\theta(0) \sharp} \lambda)(\cdot) = d \lambda(\cdot) = \rho(0, \cdot) \, dx$. 



%
The numerical scheme is summarized in Algorithm \ref{alg:GFsolver}.

\begin{algorithm}[H]
\caption{Parameterized Wasserstein gradient flow solver}
\label{alg:GFsolver}
\begin{algorithmic}
\STATE{Initialize the neural network $T_{\theta}$}
\FOR{$l=0, \cdots, K-1$}
\STATE{Sample $\{X_1, \cdots, X_{N}\}$, evaluate $\nabla_{\theta}F(\theta^l)$}
\STATE{Solve $\eta$ as the minimum norm solution to the linear system $\widehat{G}(\theta^l)\eta = -\nabla_{\theta}F(\theta^l)$}

\STATE{Set $\theta^{l+1}=\theta^l + h \eta$}

\ENDFOR
\STATE{return $\theta^{K}, T_{\theta^K}$}
\end{algorithmic}
\end{algorithm}

\section{Numerical examples}
\label{sec:experiments}
In this section, we test the proposed PWGF method, Algorithm \ref{alg:GFsolver}, on three WGFs. Depending on the energy functional $\Fcal$, we choose push-forward maps as ResNet \cite{he2016deep}, or normalizing flow \cite{kobyzev2020normalizing}. Details are shown below.

\subsection{Fokker-Planck equation}
\label{sec:FPE}

We first test Algorithm \ref{alg:GFsolver} on a $30$-dimensional Fokker-Planck equation. We set the coefficient $D=1$ and choose the potential function $V$  to be the Styblinski-Tang function in \eqref{def: FPE}:
\begin{align}
\label{example: fpe V}
    V(x) =\frac{3}{50}\Big(\sum _{i=1}^d x_i^4-16x_i^2 +5x_i \Big).
\end{align}
The initial condition is set to be a Gaussian distribution with mean $\mu_0 = \mathbf{0}$ and variance $\sigma_0 = \mathbf{1}$.
We use an $M$-layer normalizing flow ($M = 40$) as the push-forward map $T_{\theta}$:
\begin{align}
    \label{experiment: nf structure}
    T_{\theta}=f_M\circ f_{M-1}\circ \cdots \circ f_2\circ f_1,
\end{align}
where $f_j\ (1\leqslant j\leqslant M)$ is:
\begin{align*}
    f_j(x)=x+\textrm{tanh}(w_j^\top x+b_j)u_j
\end{align*}
Here $w_j, u_j\in \mathbb{R}^d$ and $b_j\in \mathbb{R}$, and $\theta = \{w_j,b_j,u_j~:~1\leqslant j \leqslant M\}$.

We run Algorithm \ref{alg:GFsolver} with time step size $h=0.005$.  $2,000$ samples are generated from $\rho_{\theta}$ and their projection of these samples onto the $2$-dimensional subspace of $(x_1,x_2)$ are plotted. Notice that $V(x)$ has $4$ centers, and the push-forward samples follow a density that captures this structure as shown in \autoref{ST sampleplot}. The KL divergence \eqref{def: entropy H} is evaluated empirically with all samples by 
\begin{align} \label{eq: KL_eval}
    \nonumber
    \mathcal{H}_{KL}(\rho \| \rho_*) & = \int \frac{1}{D} V(x) \rho(x) + \rho(x) \log \rho(x) \, dx + \log Z_D \\
    \approx & \frac{1}{N} \sum_{i=1}^N \frac{1}{D} V(x^{(i)}) + \log \rho(x^{(i)}) + \log Z_D, \quad \text{ where } \{x^{(i)}\}_{i=1}^N \sim \rho.
\end{align}
We plot the KL divergence curve following the solution of PWGF in Figure \ref{fig: ST kl convergence}, which shows a clear decreasing trend. 

\begin{figure}[t]
    \begin{subfigure}{0.16\textwidth}
        \centering
        \includegraphics[width=0.99\linewidth]{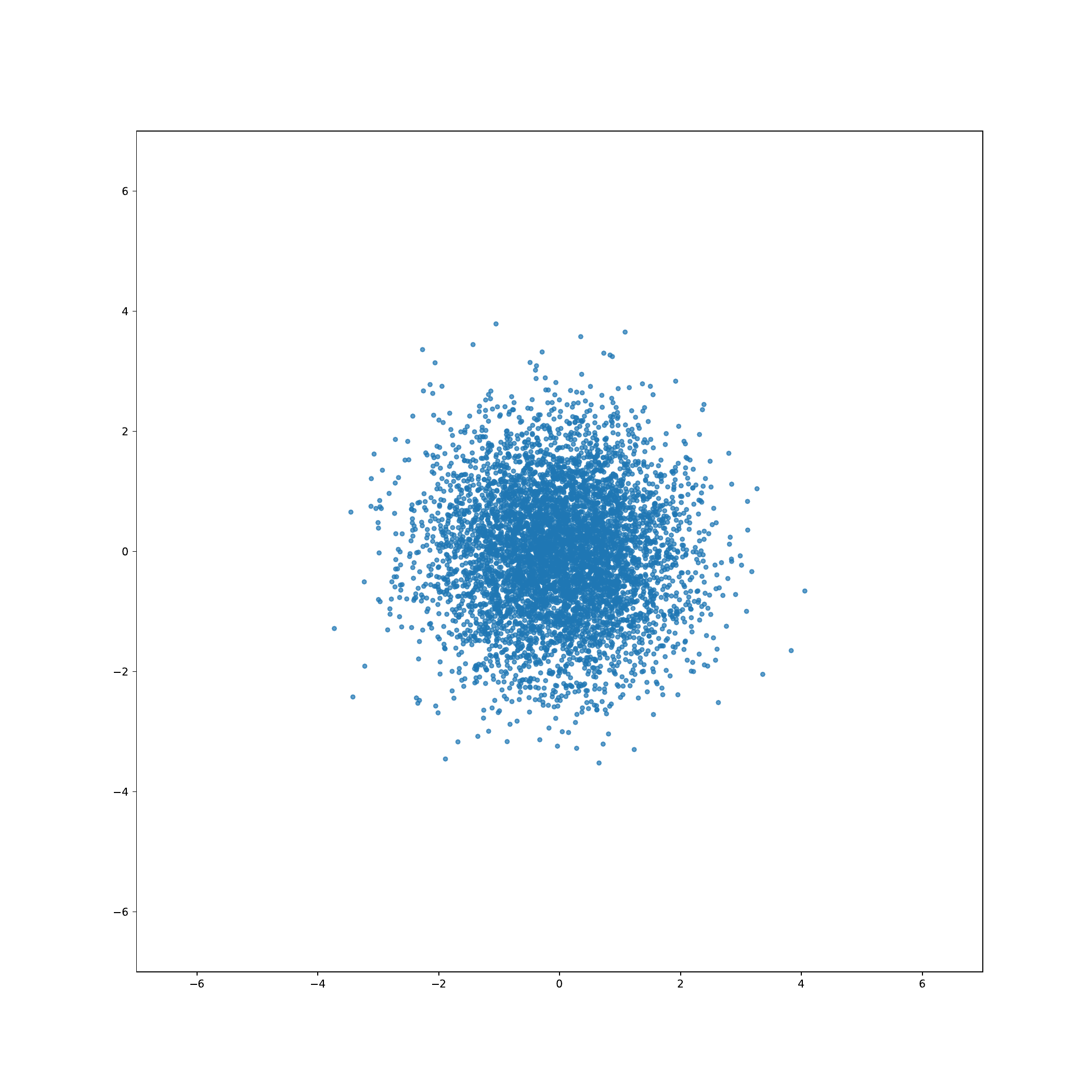}
        \caption{$t=0$}
    \end{subfigure}%
    \begin{subfigure}{0.16\textwidth}
        \centering
        \includegraphics[width=0.99\linewidth]{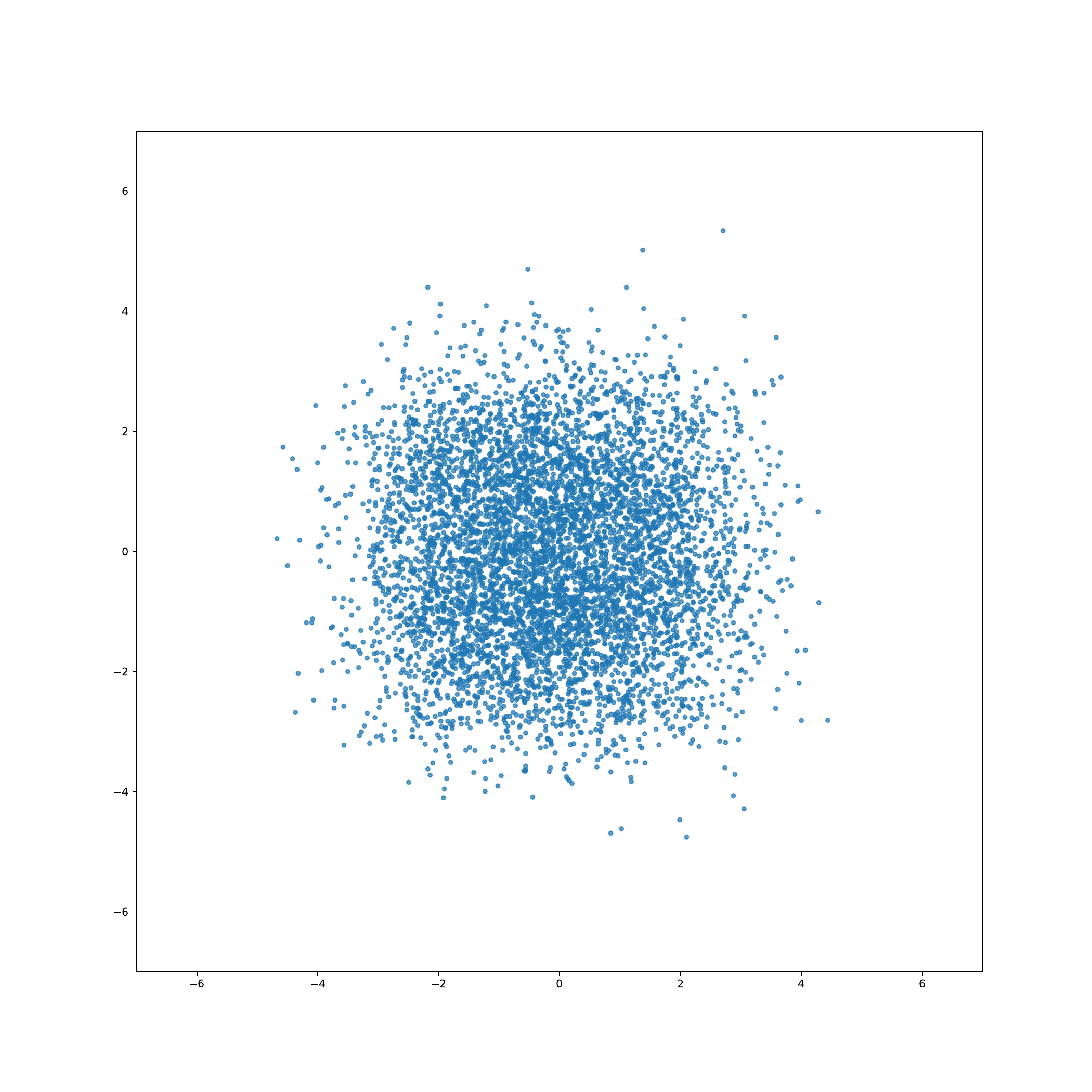}
        \caption{$t=0.3$}
    \end{subfigure}
    \begin{subfigure}{0.16\textwidth}
        \centering
        \includegraphics[width=0.99\linewidth]{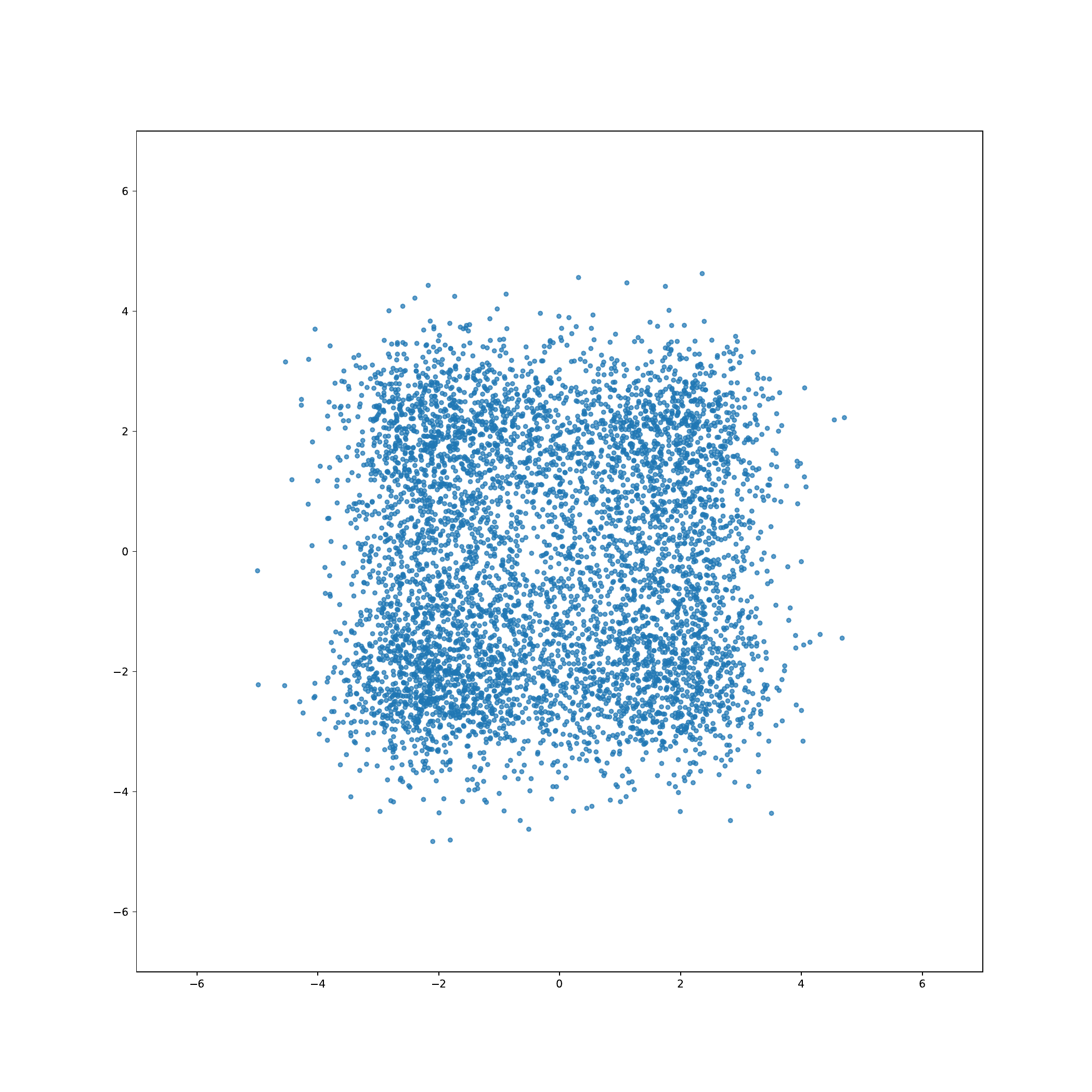}
        \caption{$t=0.6$}
    \end{subfigure}
    \begin{subfigure}{0.16\textwidth}
        \centering
        \includegraphics[width=0.99\linewidth]{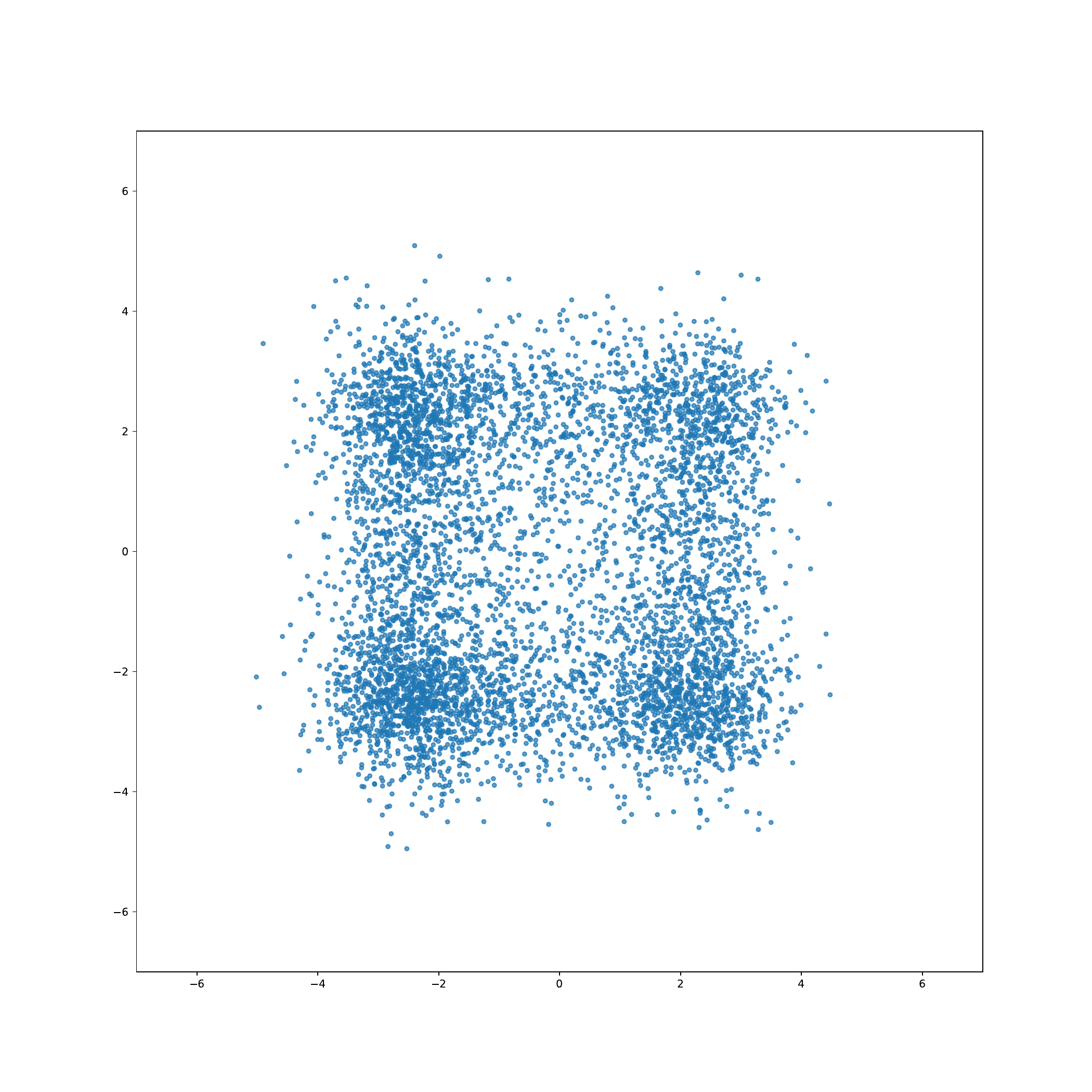}
        \caption{$t=0.9$}
    \end{subfigure}%
    \begin{subfigure}{0.16\textwidth}
        \centering
        \includegraphics[width=0.99\linewidth]{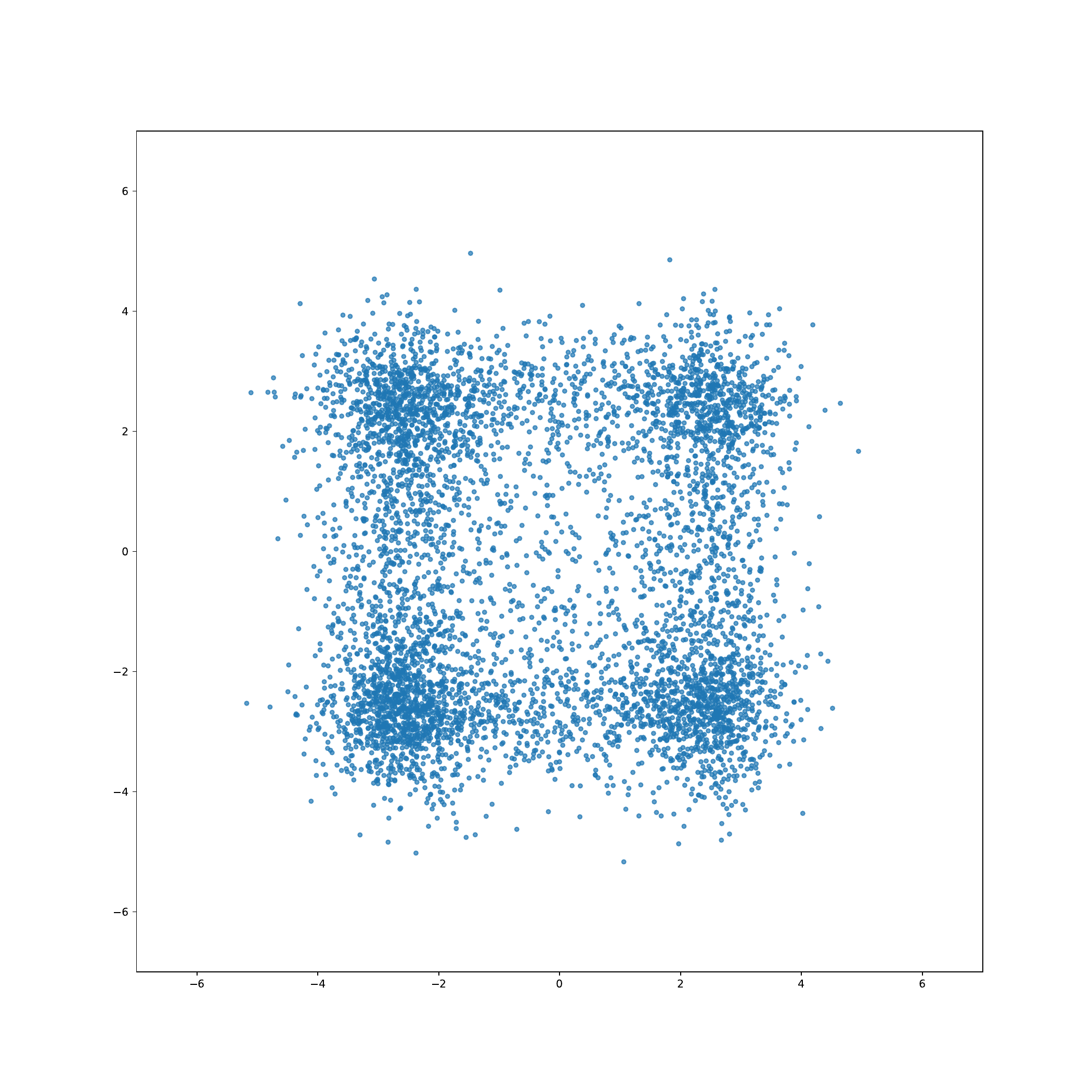}
        \caption{$t=1.2$}
    \end{subfigure}
    \begin{subfigure}{0.16\textwidth}
        \centering
        \includegraphics[width=0.99\linewidth]{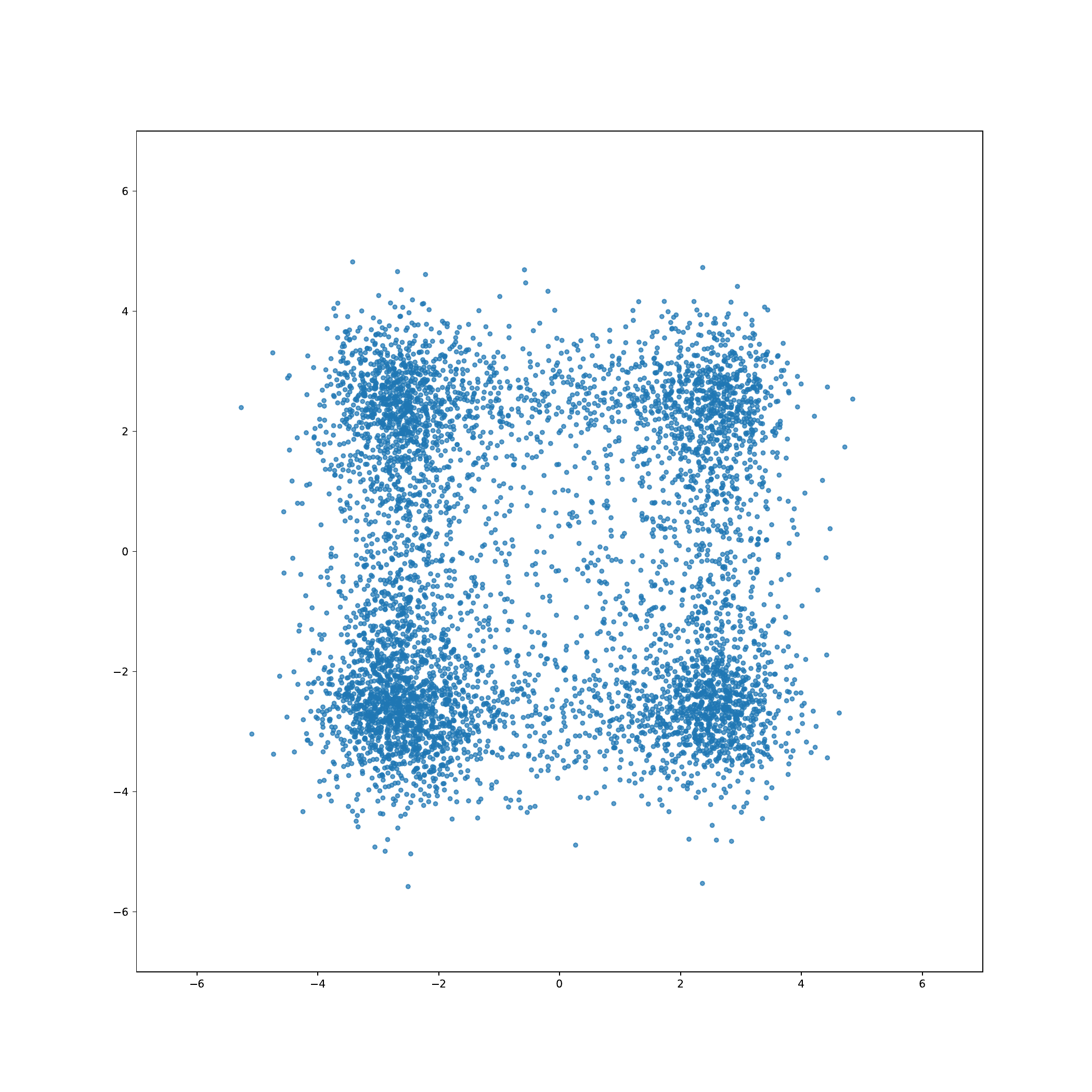}
        \caption{$t=1.5$}
    \end{subfigure}
    \caption{Sample plots of computed $\rho_{\theta}$ at different time $t$ for Fokker-Planck equation with the Styblinski-Tang function as the potential function $V(x)$}
    \label{ST sampleplot}
\end{figure}
\begin{figure}[h]
    \centering
    \includegraphics[width=0.45\linewidth]{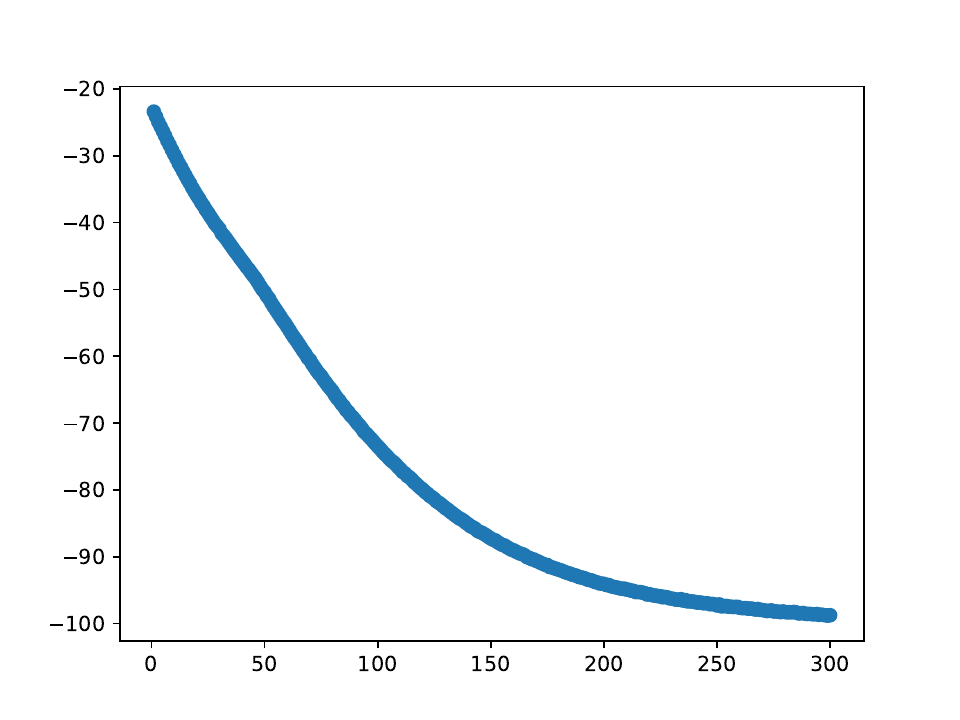}
    \caption{The decay of KL divergence along the solution of PWGF for the Fokker-Planck equation with Styblinski-Tang potential in the $30$-dimensional example.}
    \label{fig: ST kl convergence}
\end{figure}

To evaluate the improvement in its computation efficiency with the new metric $\hat{G(\theta)}$, we compare the running time of PWGF Algorithm \ref{alg:GFsolver} with the PFPE algorithm proposed in \cite{liu2022neural}. We take the $2$-dimensional FPE with $V(x)$ defined in \eqref{example: fpe V}. For both algorithms, we use the same neural network structure ($40$-layer normalizing flow) and the same sample size ($12000$) to evaluate the KL divergence. The $\psi$ neural network in PFPE, as well as the training parameters, are set as its default values as reported in \cite{liu2022neural}. We run both algorithms with the same step size. It takes $14.26$ seconds for PWGF to finish $10$ time steps, while $6093.58$ seconds for PFPE to finish $10$ time steps, indicating a more than $400$ times speed up. Both codes are run on an NVIDIA RTX-3070 GPU with CUDA enabled. 

\subsection{Porous medium equation}
We apply Algorithm \ref{alg:GFsolver} to solve the porous medium equation \eqref{PME}. In recent years, the porous medium equation has been solved by several other methods, such as physics-informed neural networks (PINN) \cite{wang2023neural} which computes the density function directly in high dimensions and neural network-based implicit particle methods with JKO scheme \cite{lee2023deep} which computes samples of the solution. For all three examples of porous-medium equation, we use a neural network with residual structure as the push-forward map
\begin{align}
    T_{\theta}=Id+R_{\theta}
\end{align}
where $R_{\theta}$ is a multi-layer perceptron with $3$ hidden layers and $100$ neurons in each hidden layer.

\textit{An example with exact weak solution:} 
Suppose the initial condition $\rho(x, 0) = \delta(x)$ is the Dirac delta function.

\begin{align}\label{eq:pme}
    \frac{\partial \rho(x,t)}{\partial t} & = \Delta \rho^m, \quad x \in \mathbb{R}^d \\
    \rho(x,0) & = \delta(x)
\end{align}
\noindent The exact weak solution to \eqref{eq:pme} is 
\begin{equation}\label{eq: zkb solution}
    \rho(x,t) = t^{-\alpha} F\left(\frac{x}{t^{\beta}} \right),\quad \mbox{where} \quad F(\xi) = (C - k \xi^2)_+^{\frac{1}{m-1}},
\end{equation}
and $(s)_+ = \max\{s, 0\}$. The constants are 
\begin{align*}
    \alpha = \frac{d}{d(m-1) + 2}, \quad \beta = \frac{\alpha}{d}, \quad k = \frac{(m-1)\alpha}{2md}
\end{align*}
The constant $C > 0$ in \eqref{eq: zkb solution} can be uniquely determined by normalizing the mass as 
\[
\int \rho \, dx = 1,
\]
which gives 
\[
C = \left[ \frac{\left( \frac{k}{\pi} \right)^{\frac{d}{2}} \Gamma(\frac{d}{2})}{d \cdot B(\frac{d}{2}, \frac{m}{m-1})}\right]^{\frac{1}{\gamma}},
\]
where $B(\cdot, \cdot)$ and $\Gamma(\cdot)$ are beta and gamma functions respectively and $\gamma = \frac{d}{2(m-1)\alpha}$ \cite{vazquez2007porous}. 


For the experiments of $d=2$, the parameter $m$ is set to $2.4$, for the experiment of $d = 5$, the parameter $m$ is set to $3$, and for the experiment of $d=15$, the parameter $m$ is set to $2$.  Since the initial condition is a Dirac delta mass which is impossible to be represented by deep neural networks, we set the initial time to a positive value. More precisely, we compute the dynamics of the samples generated from the probability density $\rho$ from $t=0.1$ to $t=0.6$ for $d=2$ and from $t = 1$ to $t = 1.5$ for both $d=5$ and $d=15$. In all experiments, the step size $h = 10^{-4}$, and the number of time steps is $T = 5000$. The sample size to evaluate $\widehat{G}(\theta)$ and $F(\theta)$ are both $30,000$. The graph of the samples with $d = 2$, $d = 5$ and $d = 15$ are shown in \autoref{fig: PME_d=2}, \autoref{fig: PME_d=5} and \autoref{fig: PME_d=15} respectively. In the plots, the blue points are the computed samples and the colored curves are the level sets of the exact solutions. We select darker colors representing smaller values when plotting the level curves to clearly show the boundary of the finite support of the exact solution. In the example with $d=2$, though the computed samples run a bit more quickly than the exact solution, the numerical solution still behaves the same as the exact solution in the sense that they are compactly supported. In the example with $d=15$, all samples are still within the compact support, despite that sampling from a density function in high dimensions remains as a challenging problem. All experiments run on an NVIDIA A$100$ GPU with $40$GB GPU memory.

\begin{figure*}[t!]
    \begin{subfigure}{0.33\textwidth}
        \centering
        \includegraphics[width=1\linewidth]{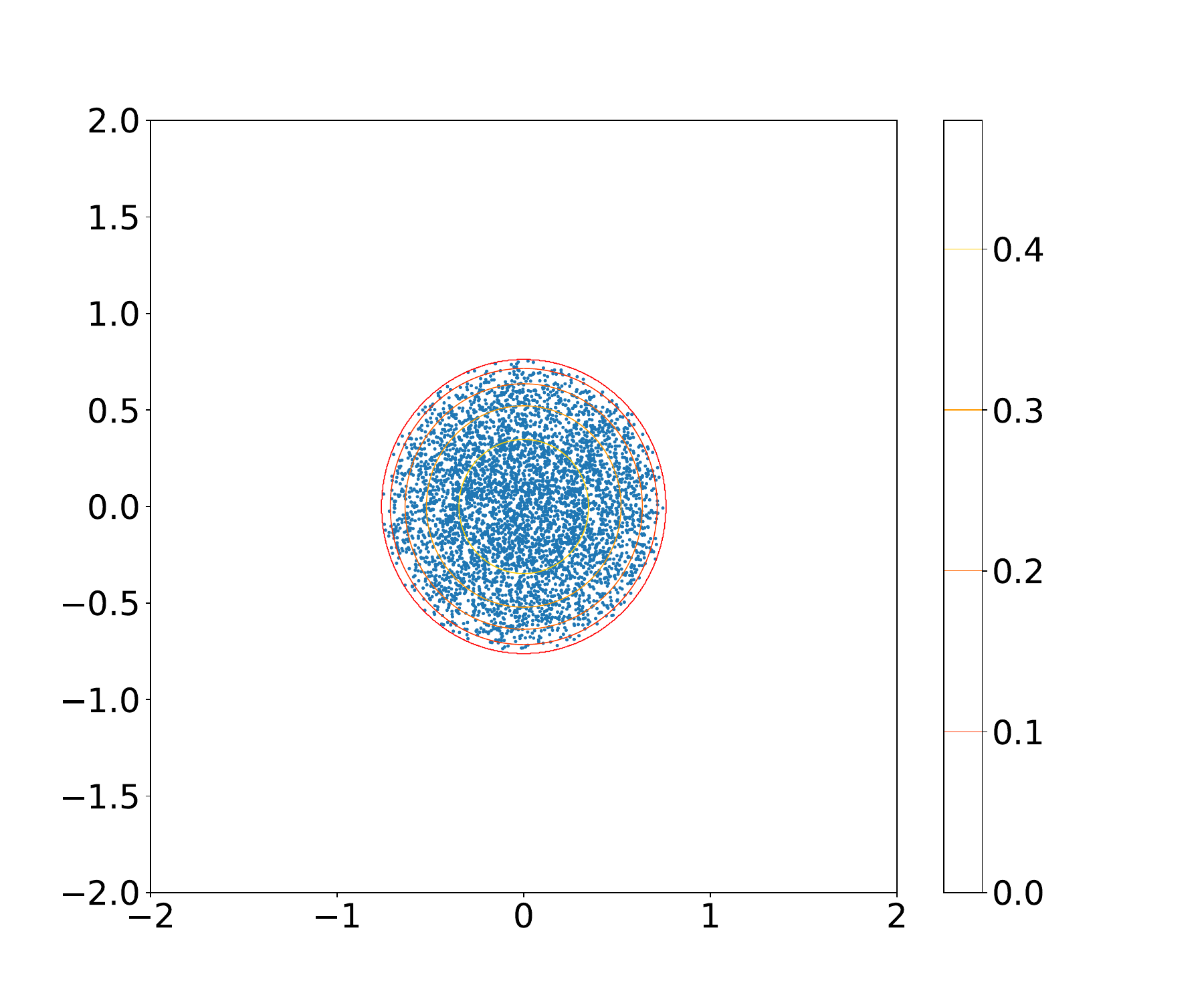}
        \caption{$t=0.1$}
    \end{subfigure}%
    \begin{subfigure}{0.33\textwidth}
        \centering
        \includegraphics[width=1\linewidth]{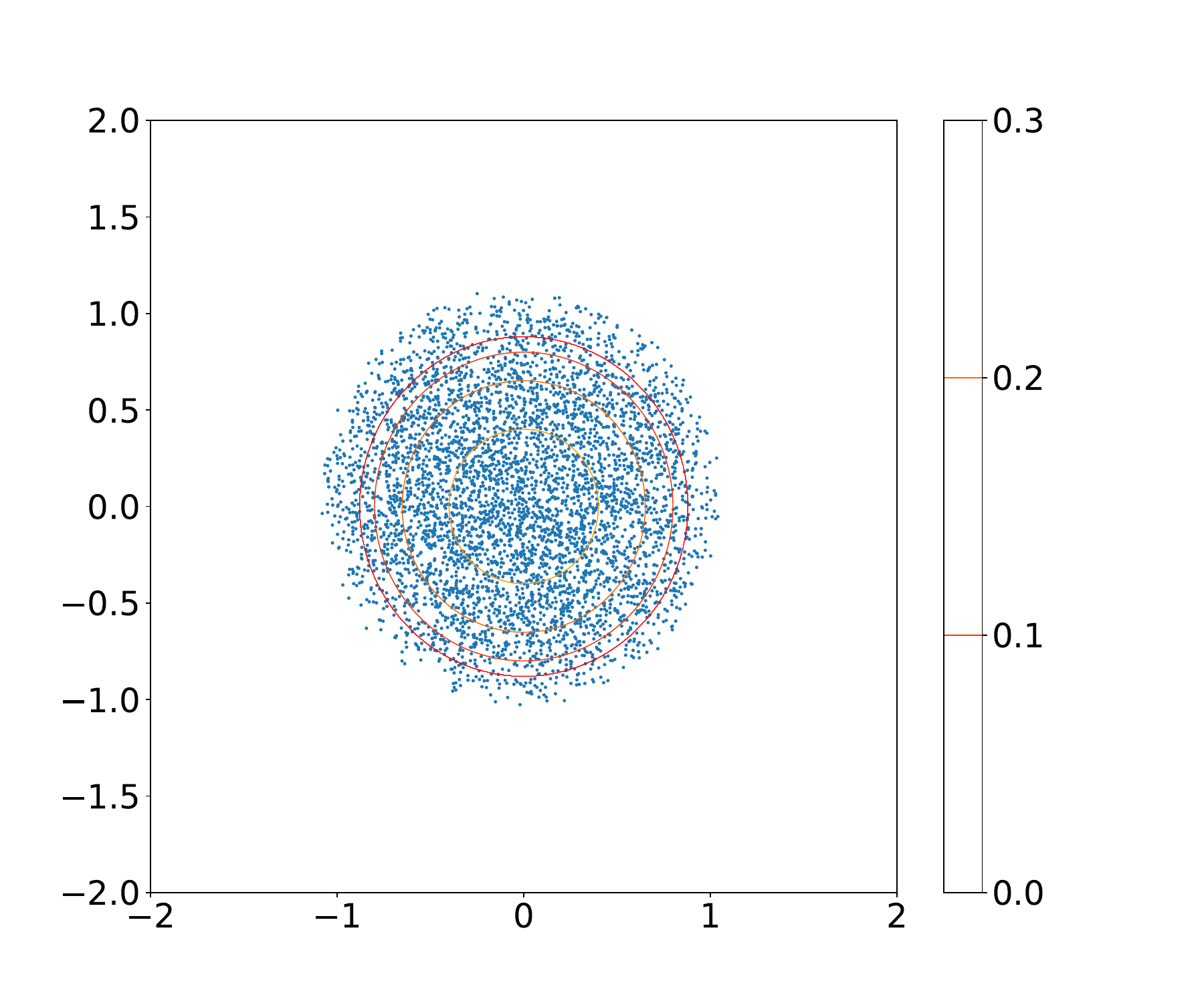}
        \caption{$t=0.2$}
    \end{subfigure}
    \begin{subfigure}{0.33\textwidth}
        \centering
        \includegraphics[width=1\linewidth]{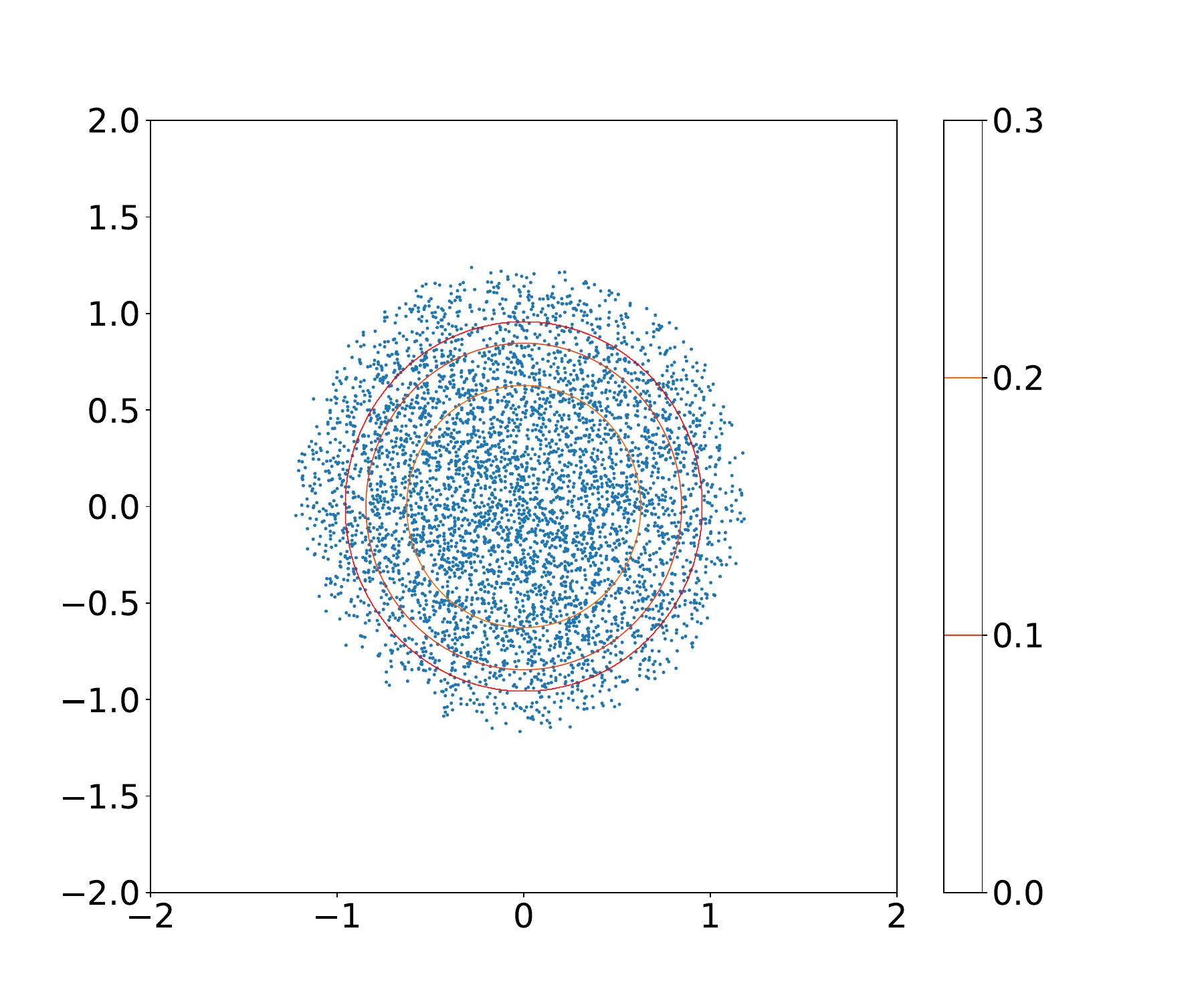}
        \caption{$t=0.3$}
    \end{subfigure}
    \begin{subfigure}{0.33\textwidth}
        \centering
        \includegraphics[width=1\linewidth]{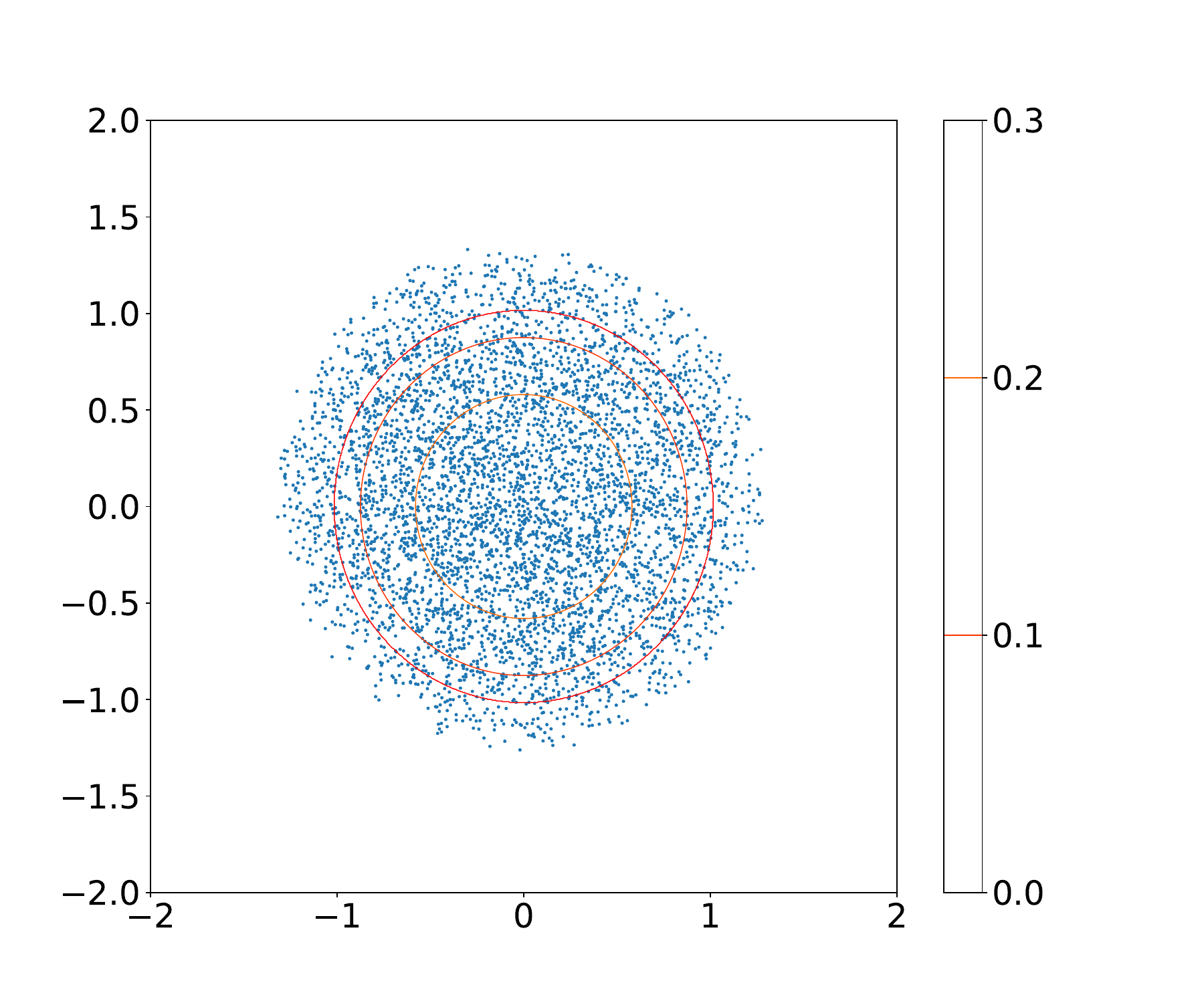}
        \caption{$t=0.4$}
    \end{subfigure}
    \begin{subfigure}{0.33\textwidth}
        \centering
        \includegraphics[width=1\linewidth]{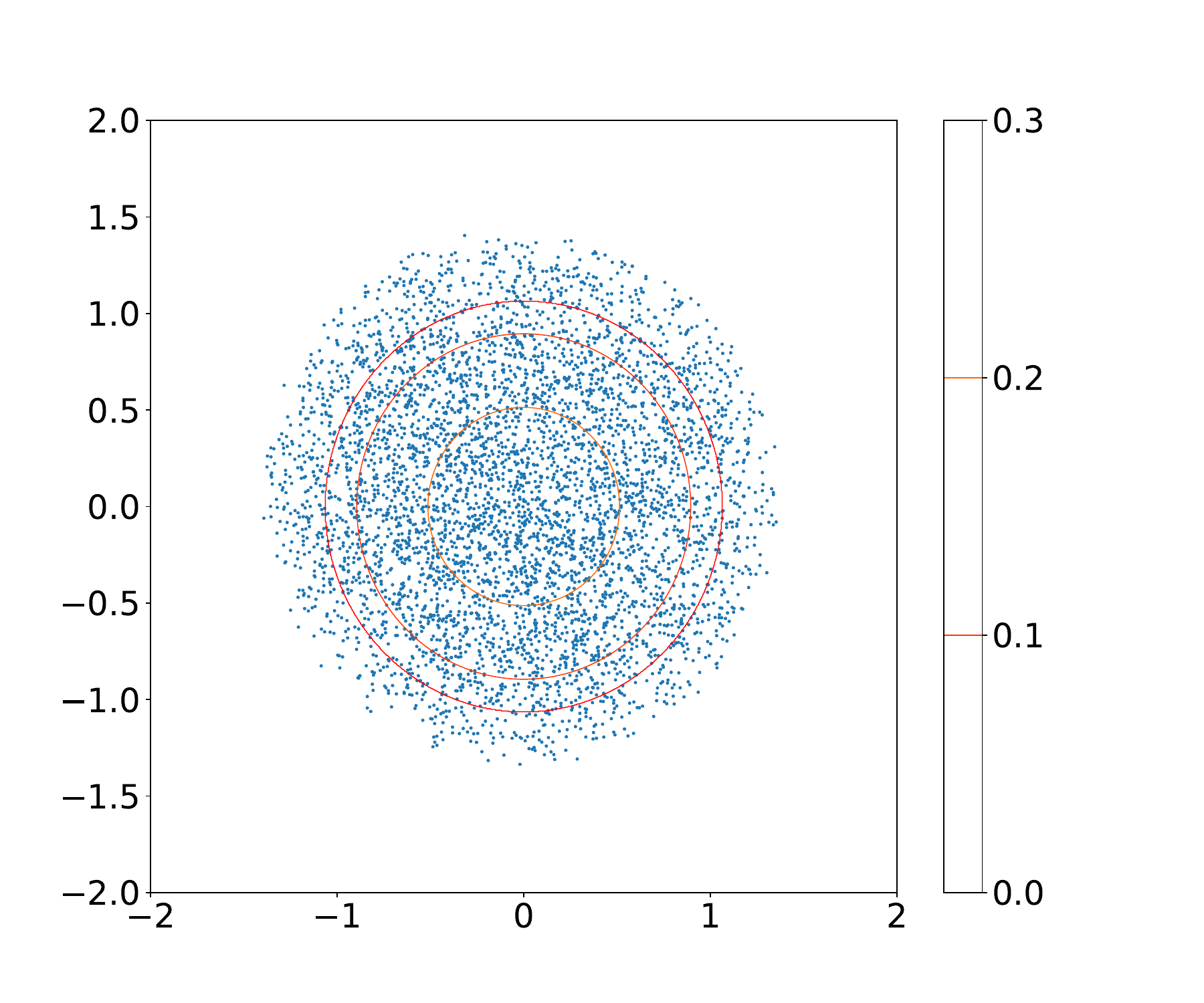}
        \caption{$t=0.5$}
    \end{subfigure}
    \begin{subfigure}{0.33\textwidth}
        \centering
        \includegraphics[width=1\linewidth]{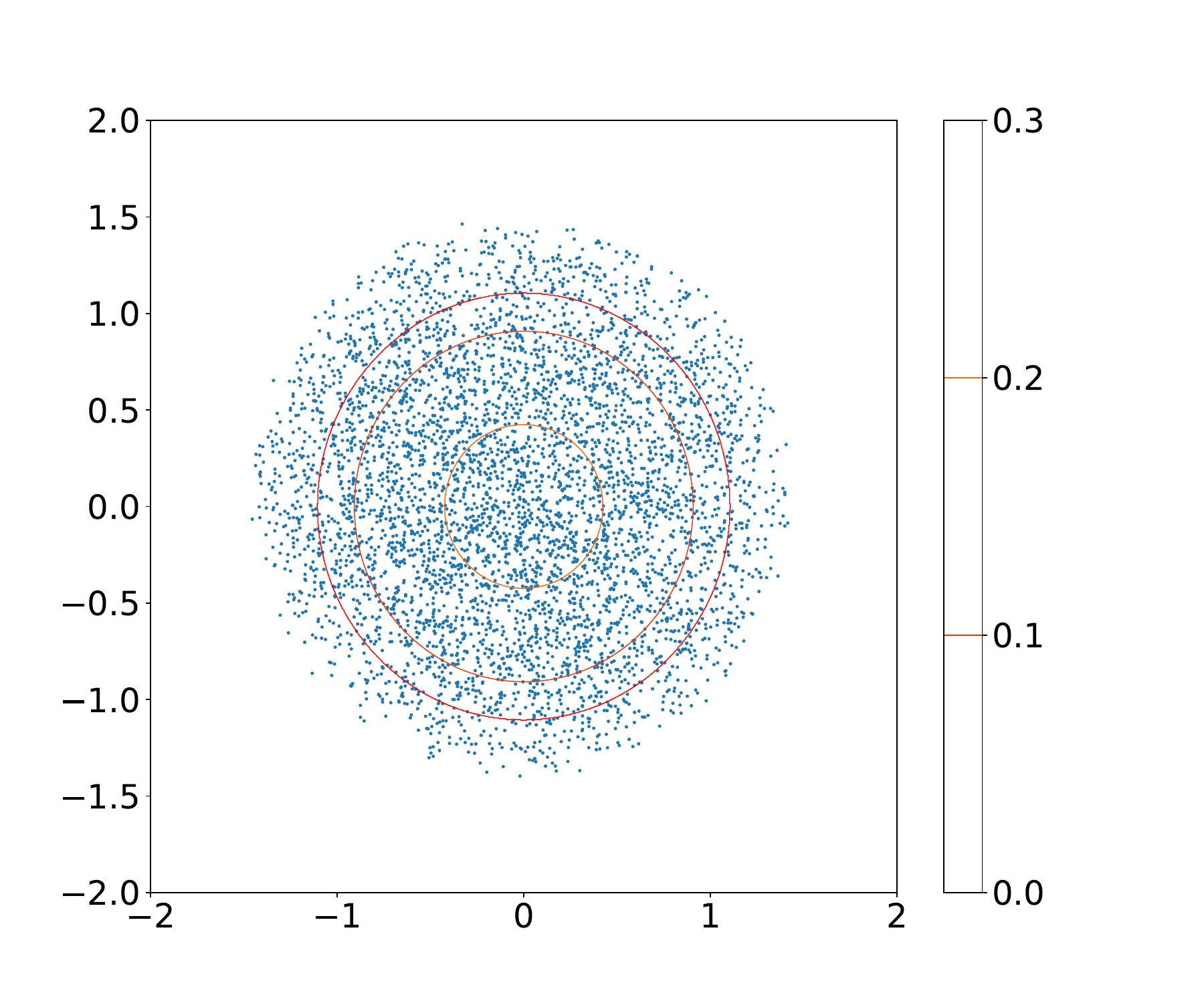}
        \caption{$t=0.6$}
    \end{subfigure}
    \caption{Sample plots of computed $\rho_{\theta}$ at different time $t$ for Porous Medium equation with Dirac Delta function as the initial condition for $d=2$ and $l=2.4$. The figures are plotted with $5000$ samples. In the level curves, darker colors correspond to smaller values to emphasize the support. 
    }
    \label{fig: PME_d=2}
\end{figure*}

\begin{figure*}[t!]
    \begin{subfigure}{0.33\textwidth}
        \centering
        \includegraphics[width=1\linewidth]{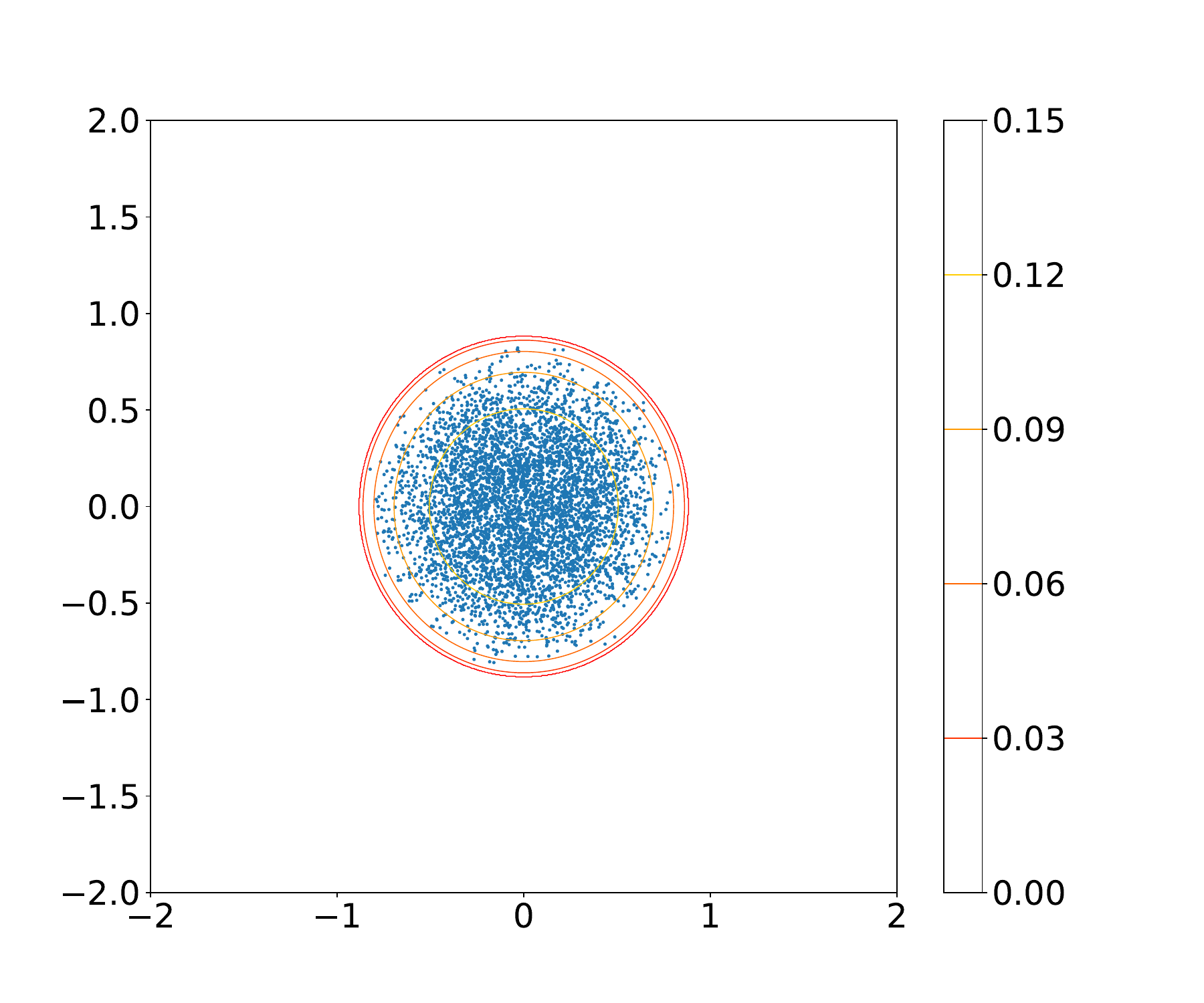}
        \caption{$t=1$}
    \end{subfigure}%
    \begin{subfigure}{0.33\textwidth}
        \centering
        \includegraphics[width=1\linewidth]{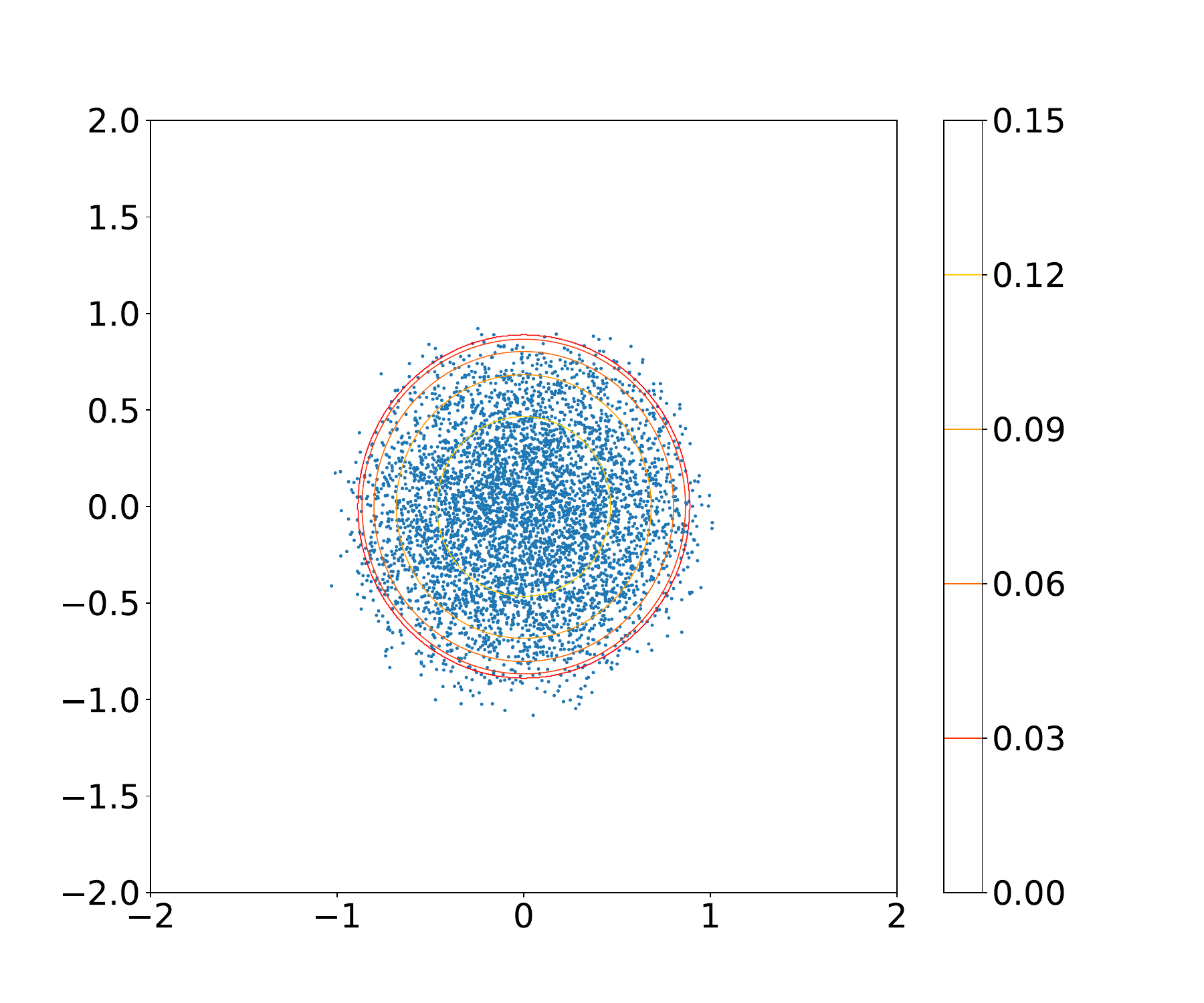}
        \caption{$t=1.1$}
    \end{subfigure}
    \begin{subfigure}{0.33\textwidth}
        \centering
        \includegraphics[width=1\linewidth]{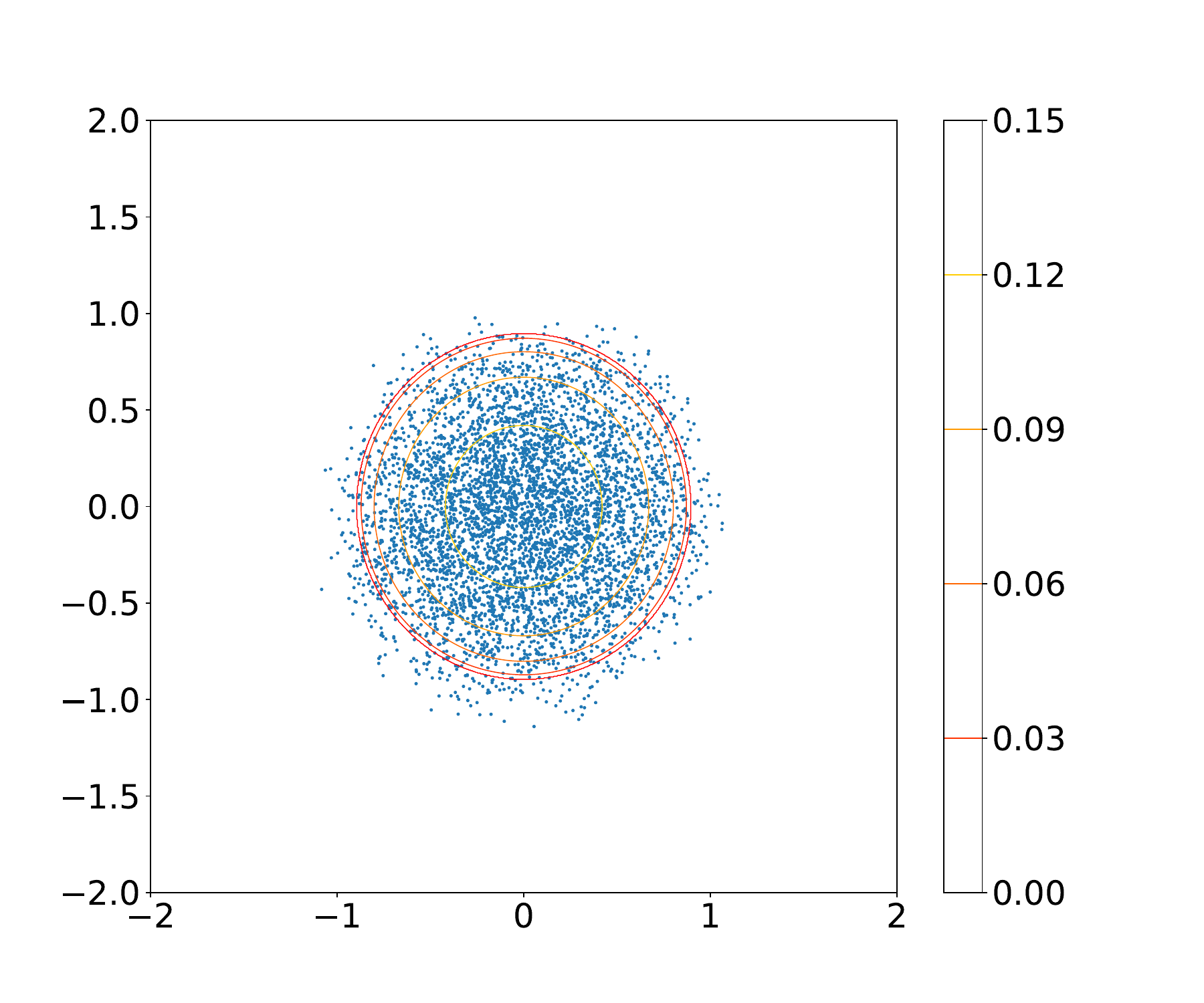}
        \caption{$t=1.2$}
    \end{subfigure}
    \begin{subfigure}{0.33\textwidth}
        \centering
        \includegraphics[width=1\linewidth]{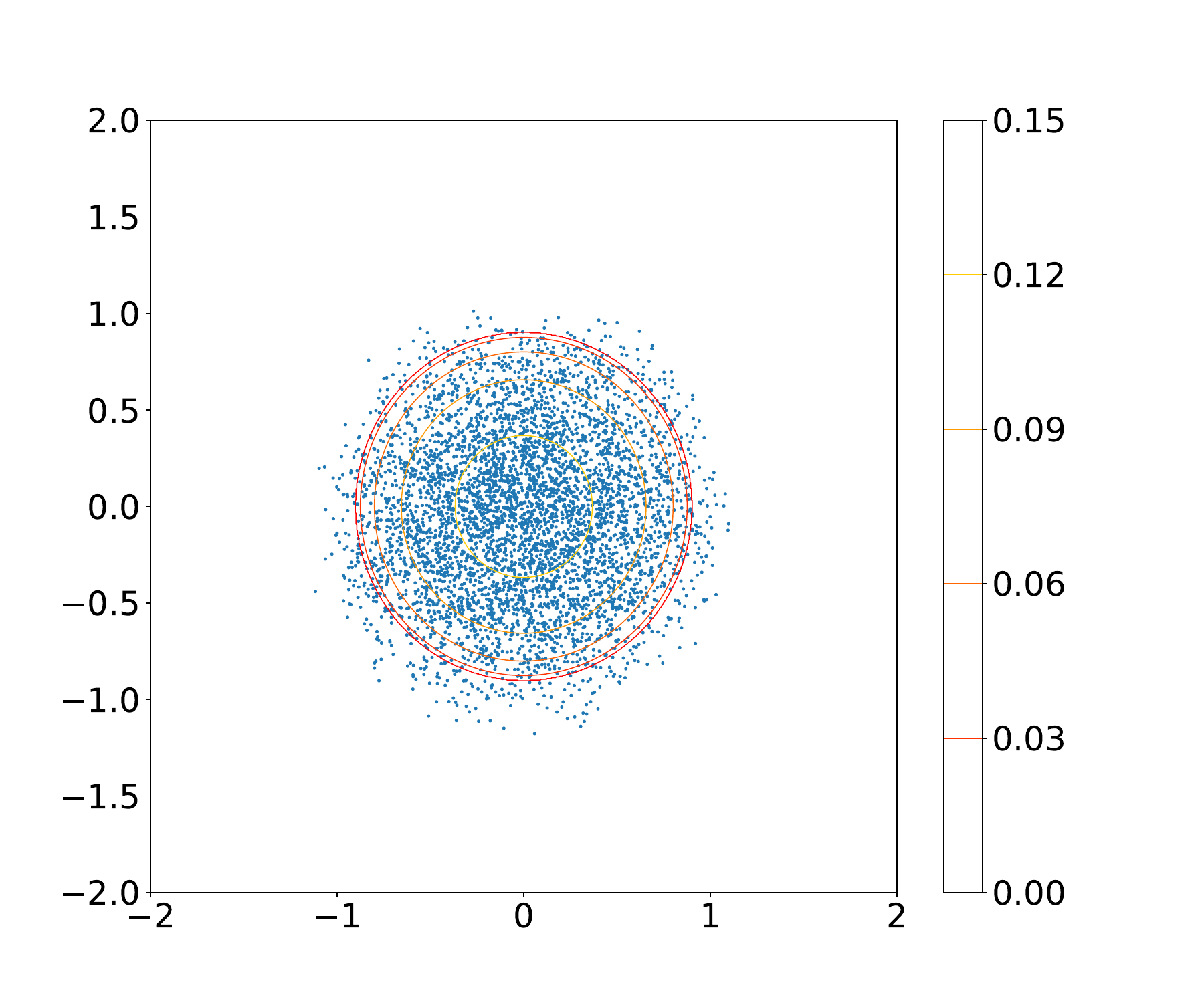}
        \caption{$t=1.3$}
    \end{subfigure}
    \begin{subfigure}{0.33\textwidth}
        \centering
        \includegraphics[width=1\linewidth]{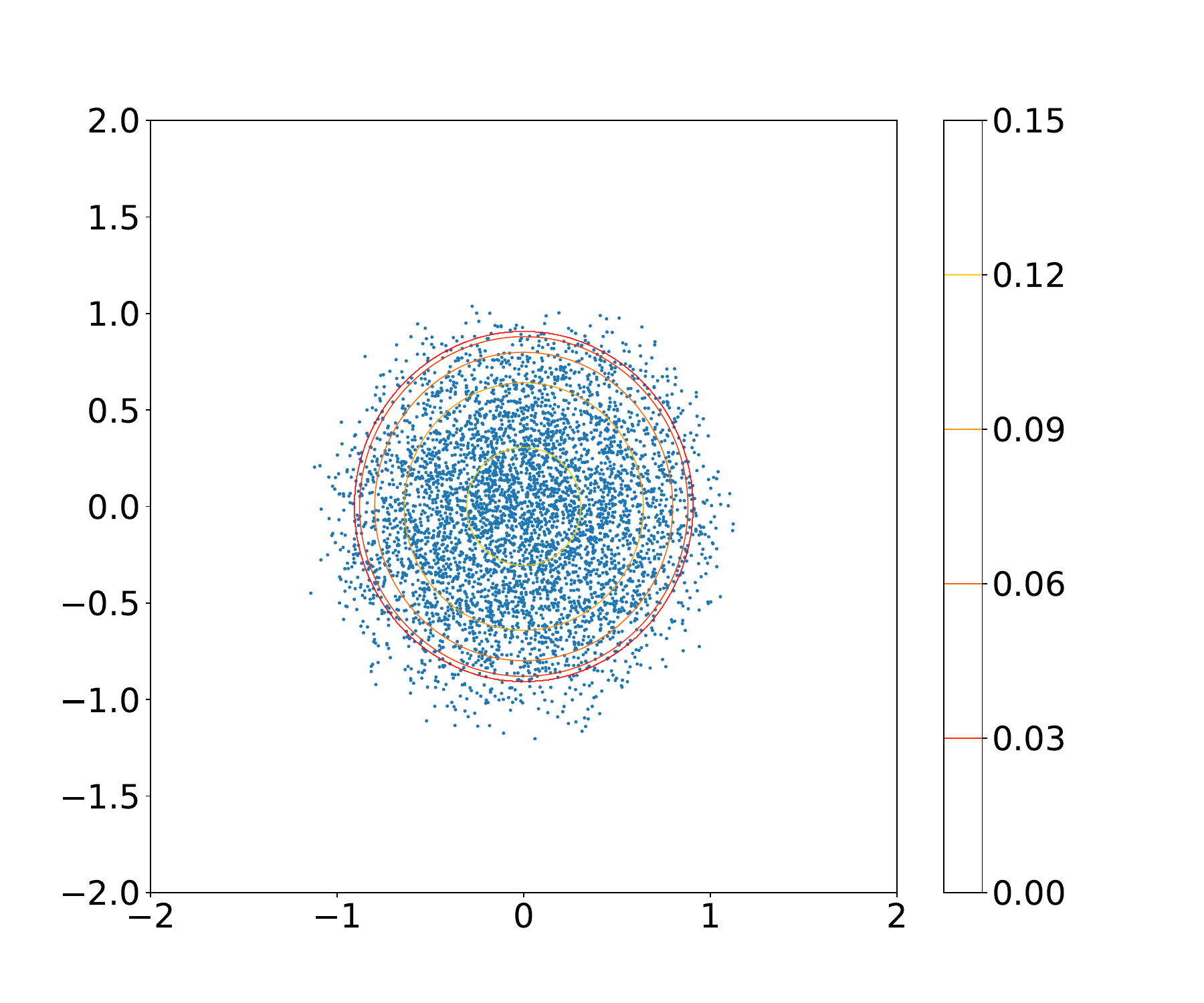}
        \caption{$t=1.4$}
    \end{subfigure}
    \begin{subfigure}{0.33\textwidth}
        \centering
        \includegraphics[width=1\linewidth]{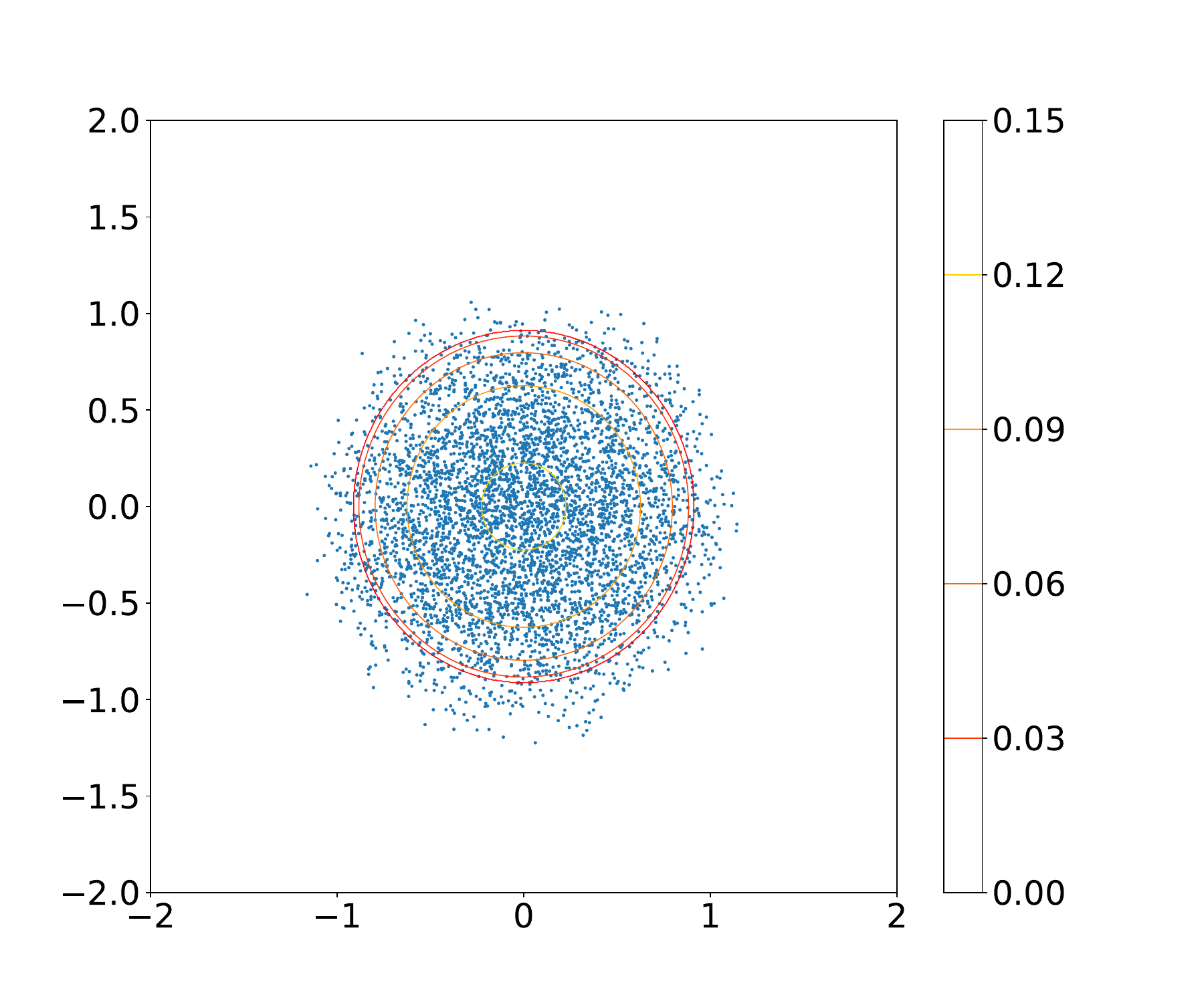}
        \caption{$t=1.5$}
    \end{subfigure}
    \caption{Sample plots of computed $\rho_{\theta}$ at different time $t$ for the porous medium equation with Dirac Delta function as the initial condition for $d=5$ and $l = 3$. The figures are plotted with $5000$ samples. In the level curves, darker colors correspond to smaller values to emphasize the support. }
    \label{fig: PME_d=5}
\end{figure*}

\begin{figure*}[t!]
    \begin{subfigure}{0.33\textwidth}
        \centering
        \includegraphics[width=1\linewidth]{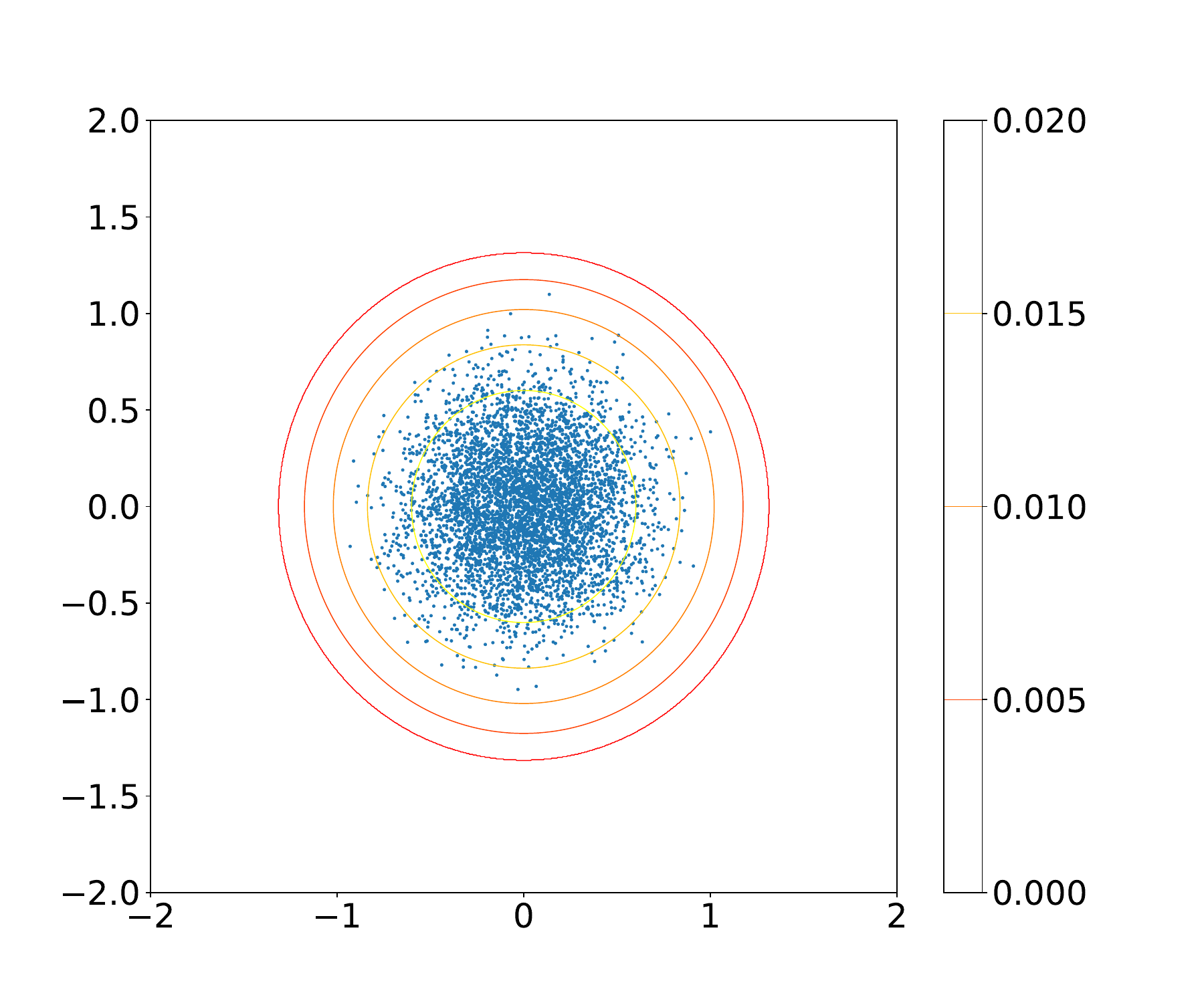}
        \caption{$t=1$}
    \end{subfigure}%
    \begin{subfigure}{0.33\textwidth}
        \centering
        \includegraphics[width=1\linewidth]{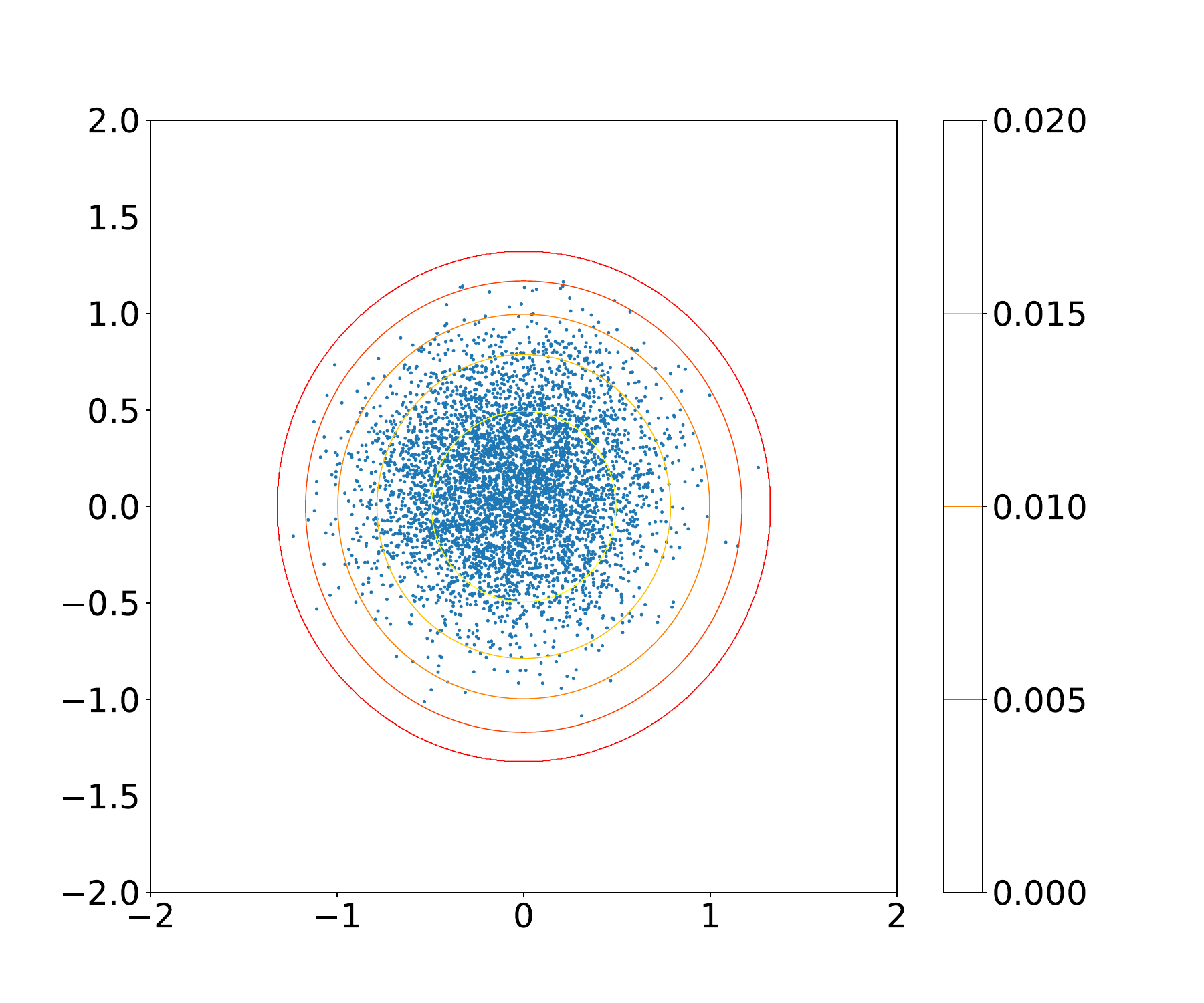}
        \caption{$t=1.1$}
    \end{subfigure}
    \begin{subfigure}{0.33\textwidth}
        \centering
        \includegraphics[width=1\linewidth]{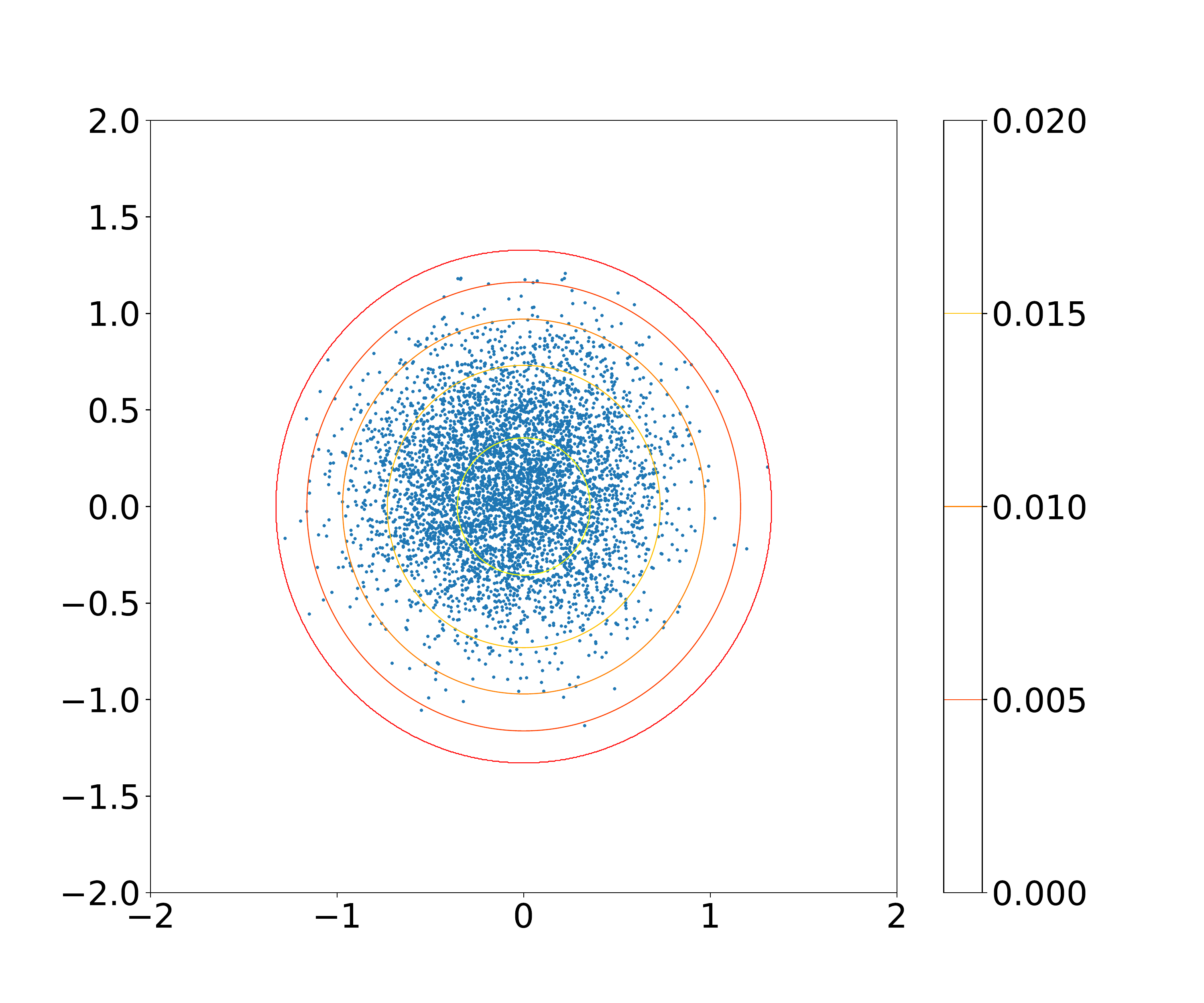}
        \caption{$t=1.2$}
    \end{subfigure}
    \begin{subfigure}{0.33\textwidth}
        \centering
        \includegraphics[width=1\linewidth]{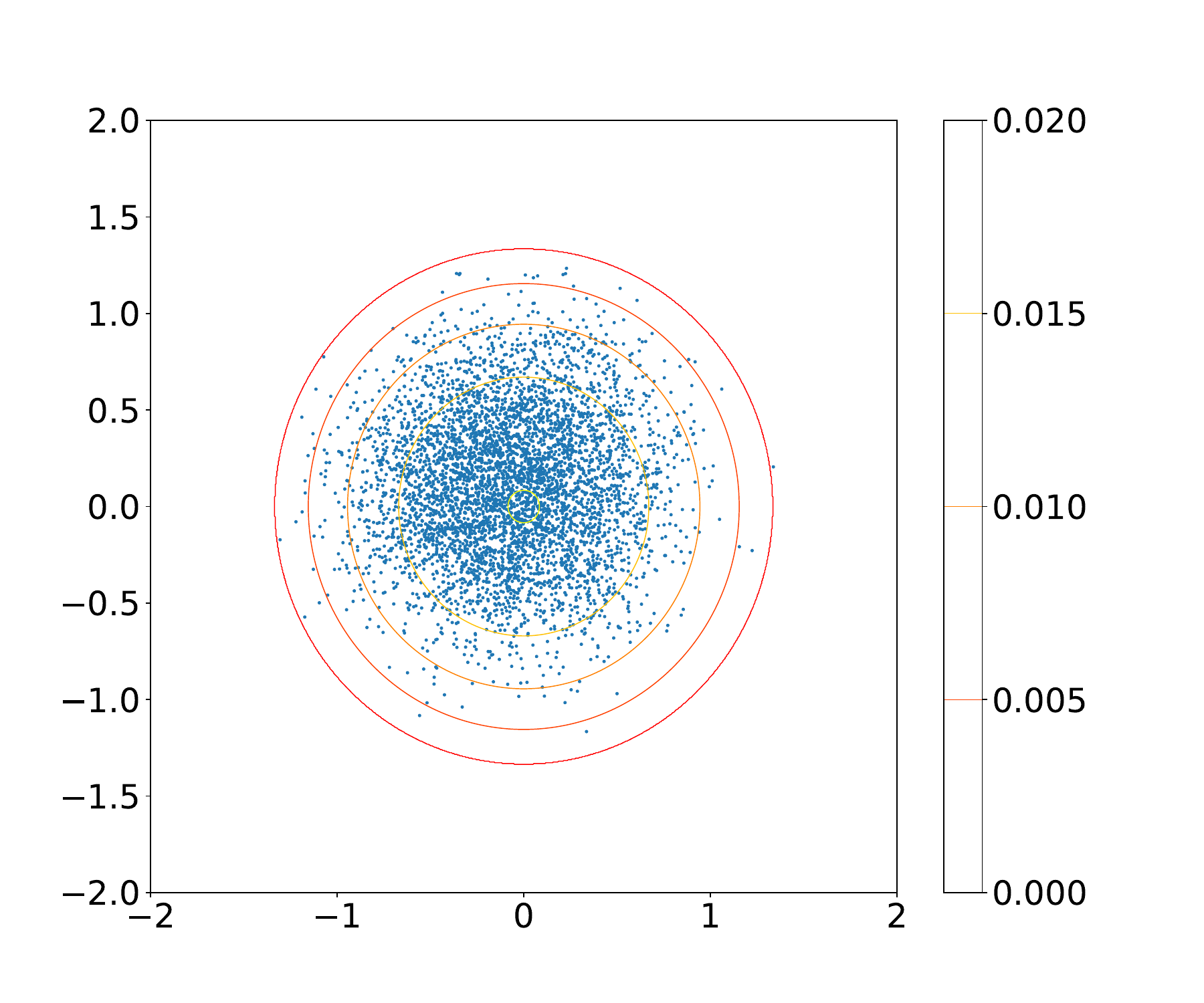}
        \caption{$t=1.3$}
    \end{subfigure}
    \begin{subfigure}{0.33\textwidth}
        \centering
        \includegraphics[width=1\linewidth]{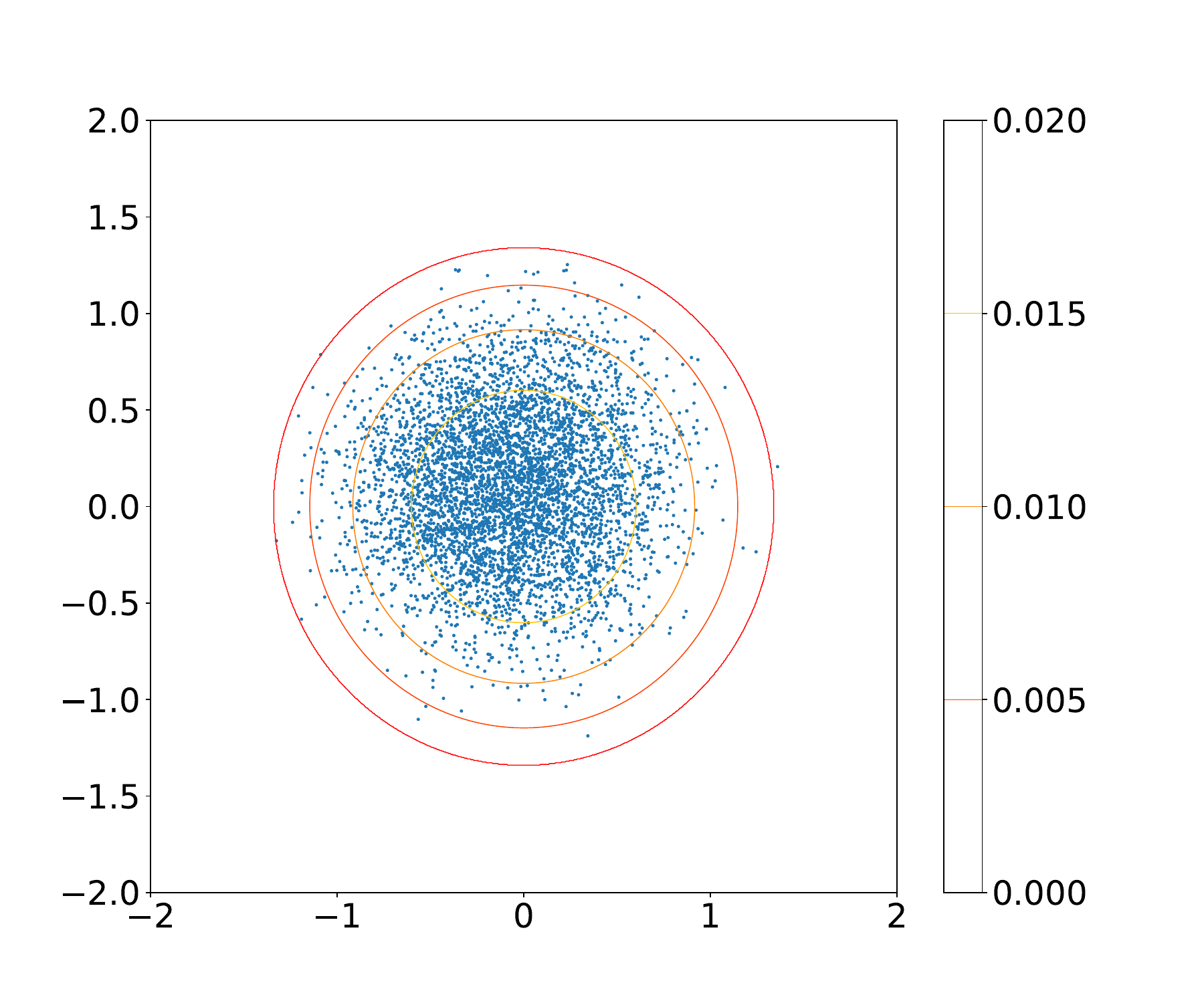}
        \caption{$t=1.4$}
    \end{subfigure}
    \begin{subfigure}{0.33\textwidth}
        \centering
        \includegraphics[width=1\linewidth]{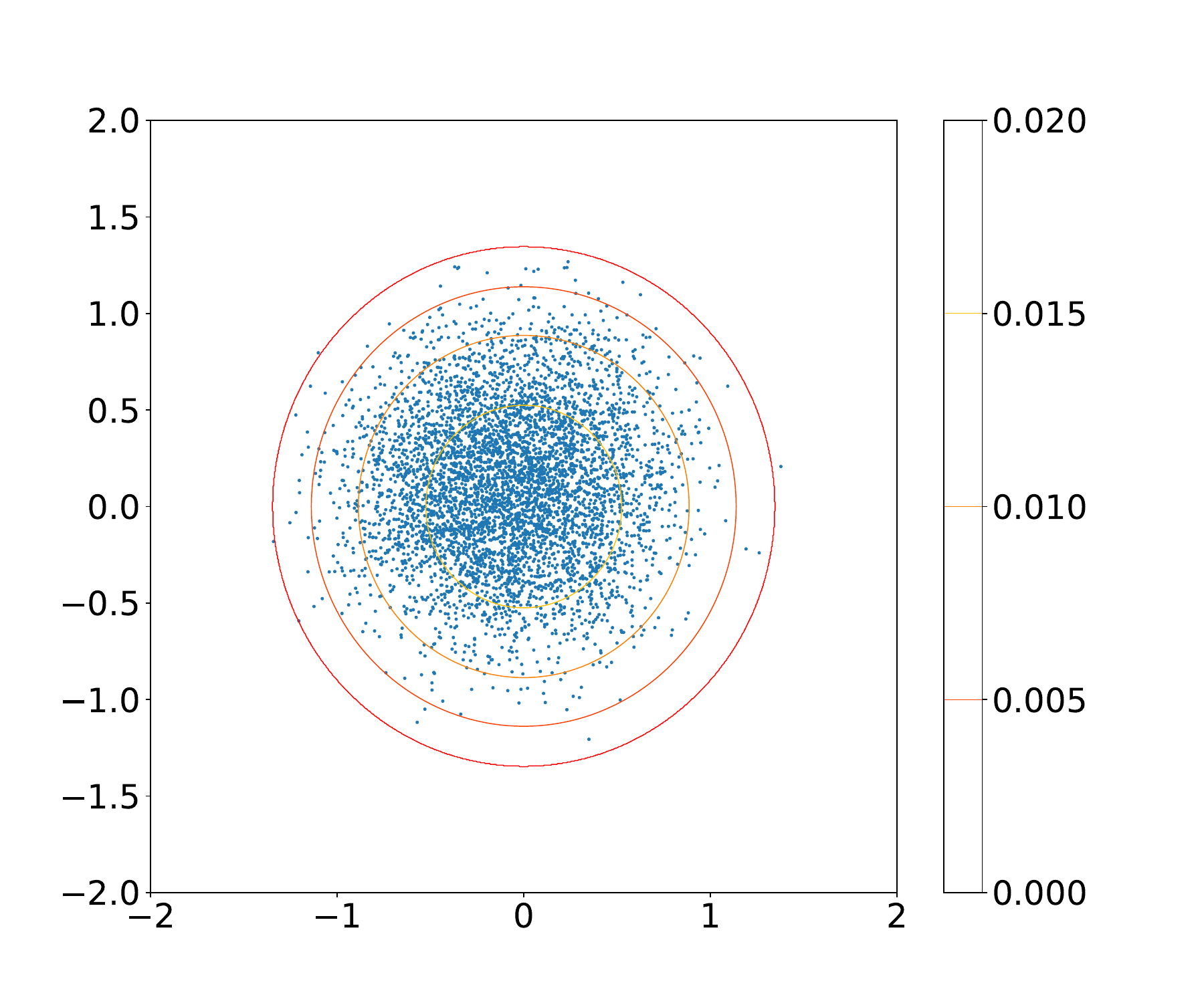}
        \caption{$t=1.5$}
    \end{subfigure}
    \caption{Sample plots of computed $\rho_{\theta}$ at different time $t$ for porous medium equation with Dirac Delta function as the initial condition for $d=15$. The figures are plotted with $5000$ samples. In the level curves, darker colors correspond to smaller values to emphasize the support. }
    \label{fig: PME_d=15}
\end{figure*}

\textit{An Example with Gaussian Mixture Distribution as the Initial Condition:} 
In this example, we apply Algorithm \ref{alg:GFsolver} to the porous medium equation with $d = 10$, $m = 3$ and a Gaussian mixture distribution as the initial condition. The equations are 
\begin{align}
    \frac{\partial \rho}{\partial t} & = \Delta (\rho^m) \\
    \rho(0,\cdot) & = 0.2 \cdot \mathcal{N}(\mu_1, \sigma_1) + 0.8 \cdot \mathcal{N}(\mu_2, \sigma_2)
\end{align}
where $\mu_1 = (0, 0, \cdots, 0)^T \in \mathbb{R}^{10}, \mu_2 (2, 2, \cdots, 2)^T \in \Rbb^{10}, \sigma_1 = \diag(0.1, \cdots, 0.1)^T$ and $\sigma_2 = \diag(0.2, \cdots, 0.2)^T$. 
The sample size to evaluate $\widehat{G}(\theta)$ and $F(\theta)$ are both $15,000$. The computed samples are shown in \autoref{fig: GM} plotted with $5000$ samples. The exact solution to this initial value problem remains open but our method can provide a simulated sample dynamics.

\begin{figure*}[t!]
    \begin{subfigure}{0.33\textwidth}
        \centering
        \includegraphics[width=0.99\linewidth]{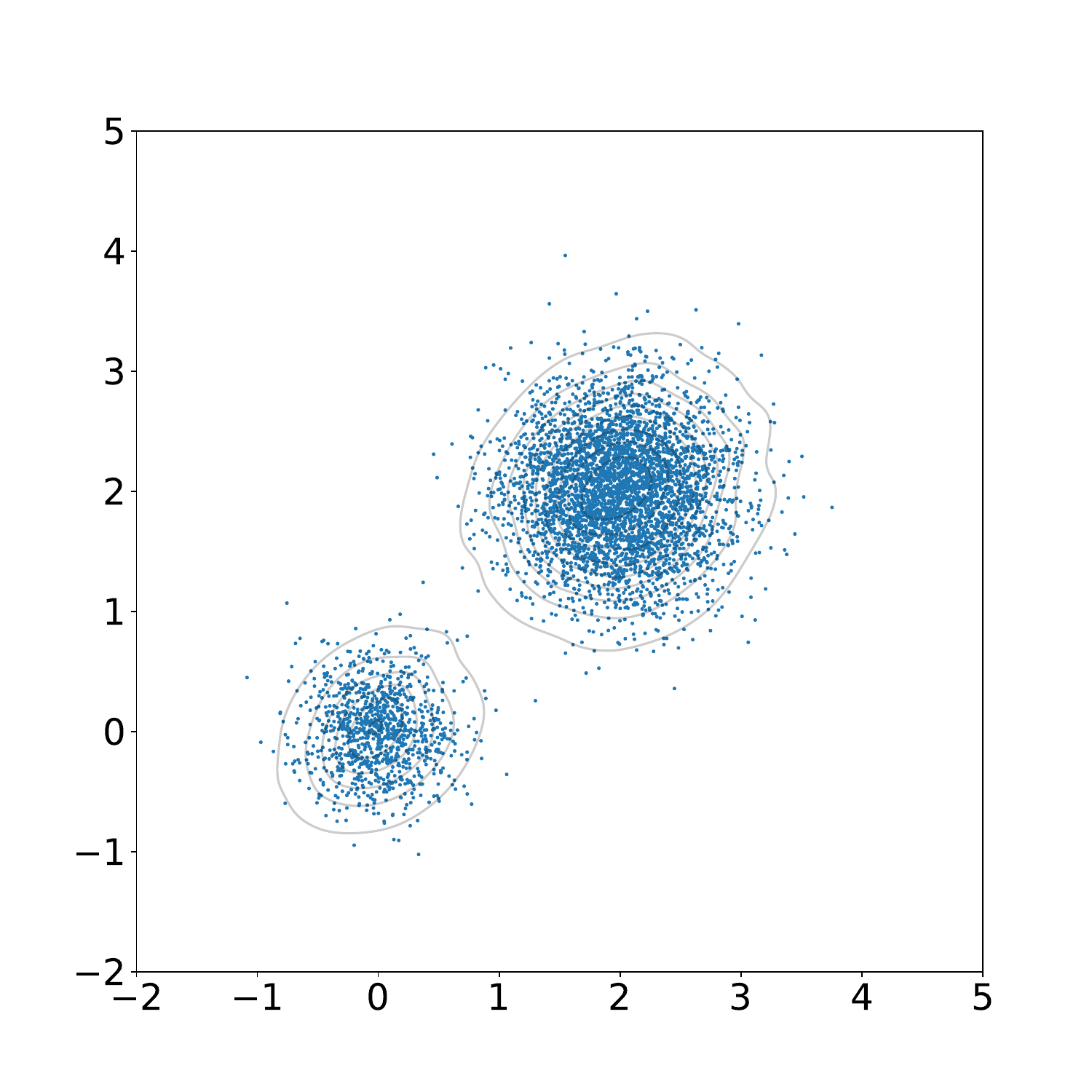}
        \caption{$t=0$}
    \end{subfigure}%
    \begin{subfigure}{0.33\textwidth}
        \centering
        \includegraphics[width=0.99\linewidth]{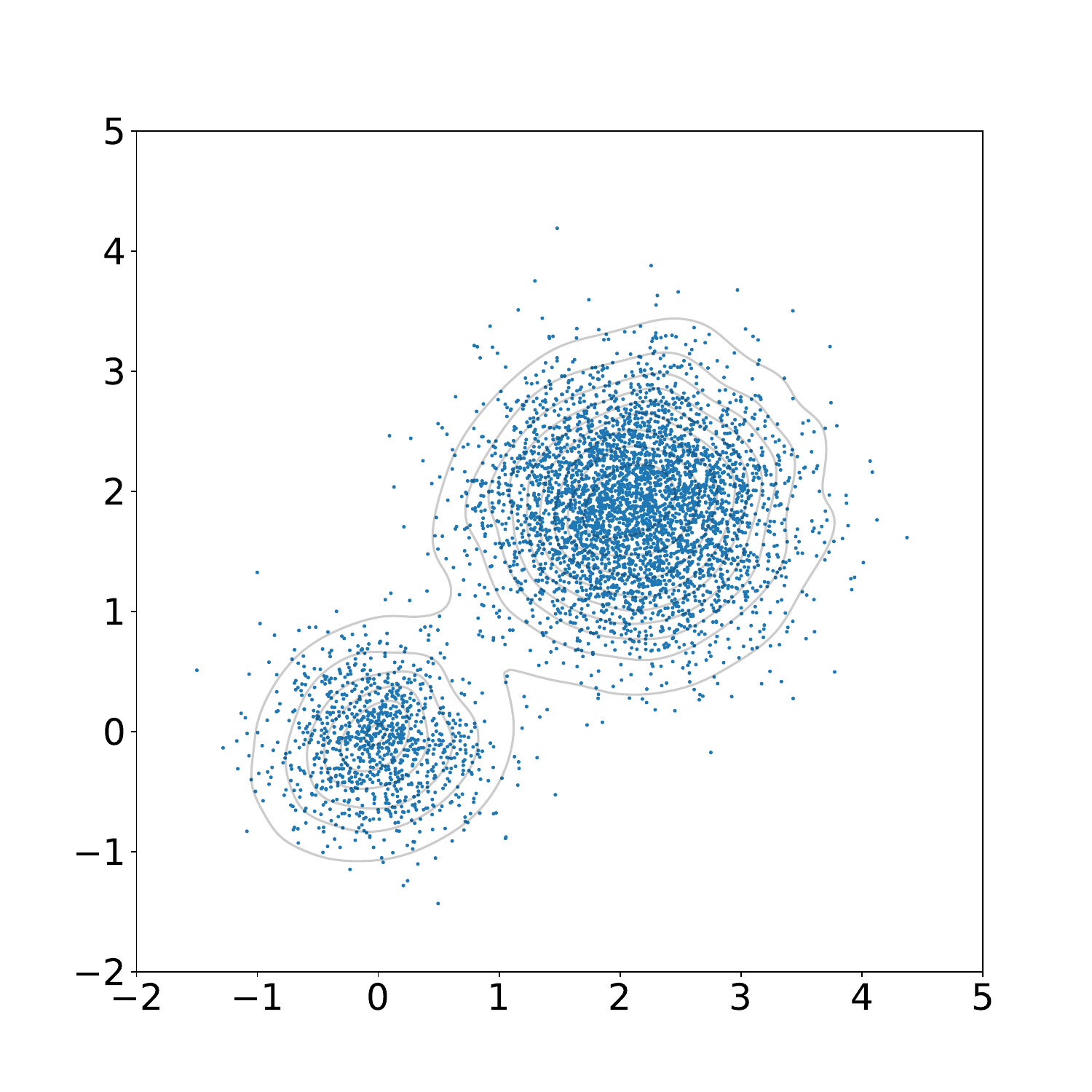}
        \caption{$t=2$}
    \end{subfigure}
    \begin{subfigure}{0.33\textwidth}
        \centering
        \includegraphics[width=0.99\linewidth]{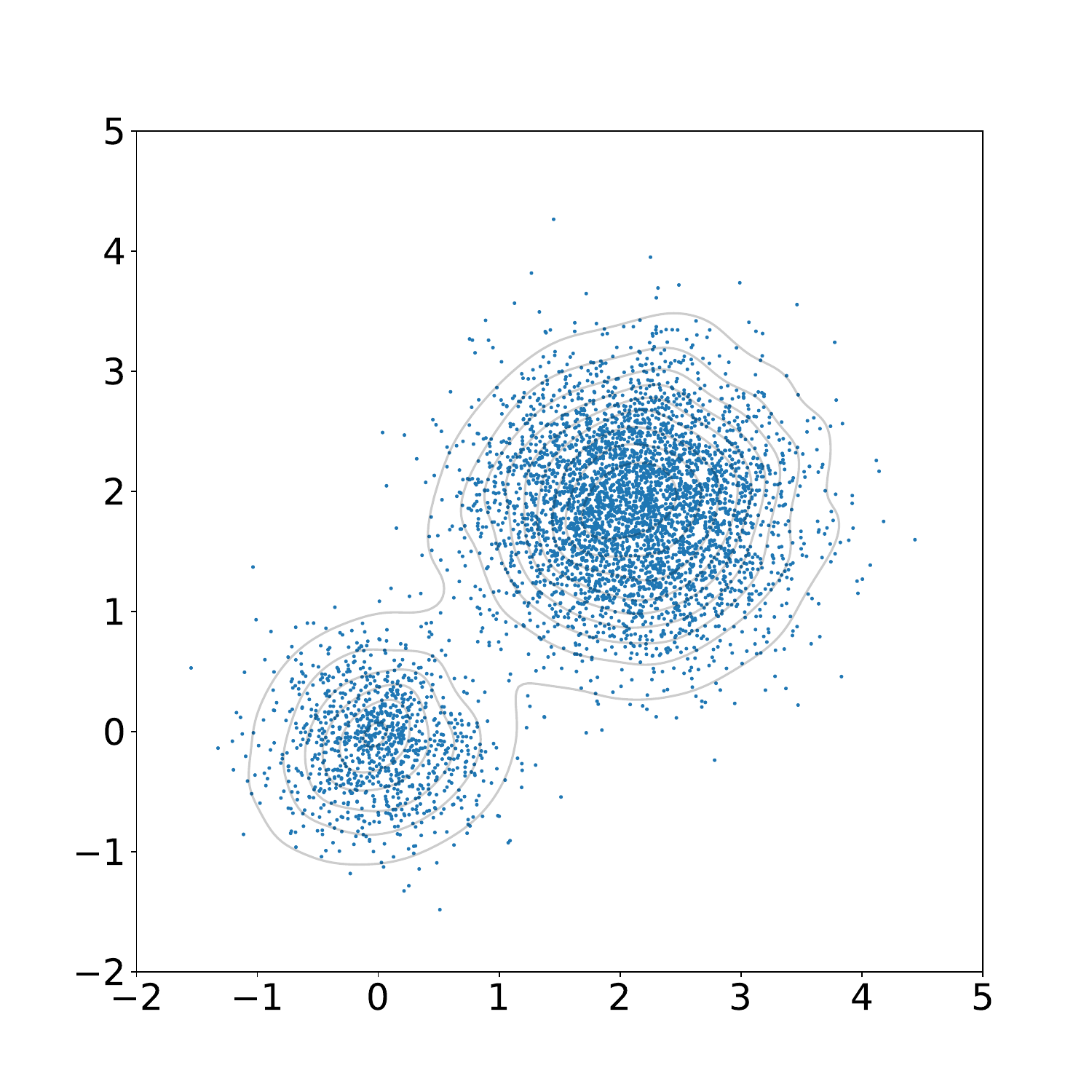}
        \caption{$t=4$}
    \end{subfigure}
    \caption{Sample plots of computed $\rho_{\theta}$ at different time $t$ for porous medium equation with Gaussian mixture distribution as the initial condition. The figures are plotted with $5000$ samples.}
    \label{fig: GM}
\end{figure*}

\textit{An Example with Mixed Gaussian Distribution and Uniform Distribution as the Initial Condition:}
In this example, we apply Algorithm \ref{alg:GFsolver} to the porous medium equation with $d = 5$, $m = 3$ and a mixture of Gaussian and uniform distributions as the initial condition. The equations are 
\begin{align}
    \frac{\partial \rho}{\partial t} & = \Delta (\rho^m) \\
    \rho(0,\cdot) & = 0.2 \cdot \mathcal{N}(\mu, \sigma) + 0.8 \cdot \mathcal{U}_5 ([-1,1]^5)
\end{align}
where $\mu = (2, 2, \cdots, 2)^T \in \Rbb^5$, $\sigma = (0.1, 0.1 , \cdots, 0.1)^T \in \Rbb^5$ and $\mathcal{U}_5 ([-1,1]^5)$ is a uniform distribution in $[-1,1]^5$. Note that this equation doesn't have a closed-form solution and it's not tractable in high dimensions. The sample size to evaluate $\widehat{G}(\theta)$ and $F(\theta)$ are both $20,000$. The computed samples are shown in \autoref{fig: GUM} plotted with $5000$ samples. 

\begin{figure*}[t!]
    \begin{subfigure}{0.33\textwidth}
        \centering
        \includegraphics[width=0.99\linewidth]{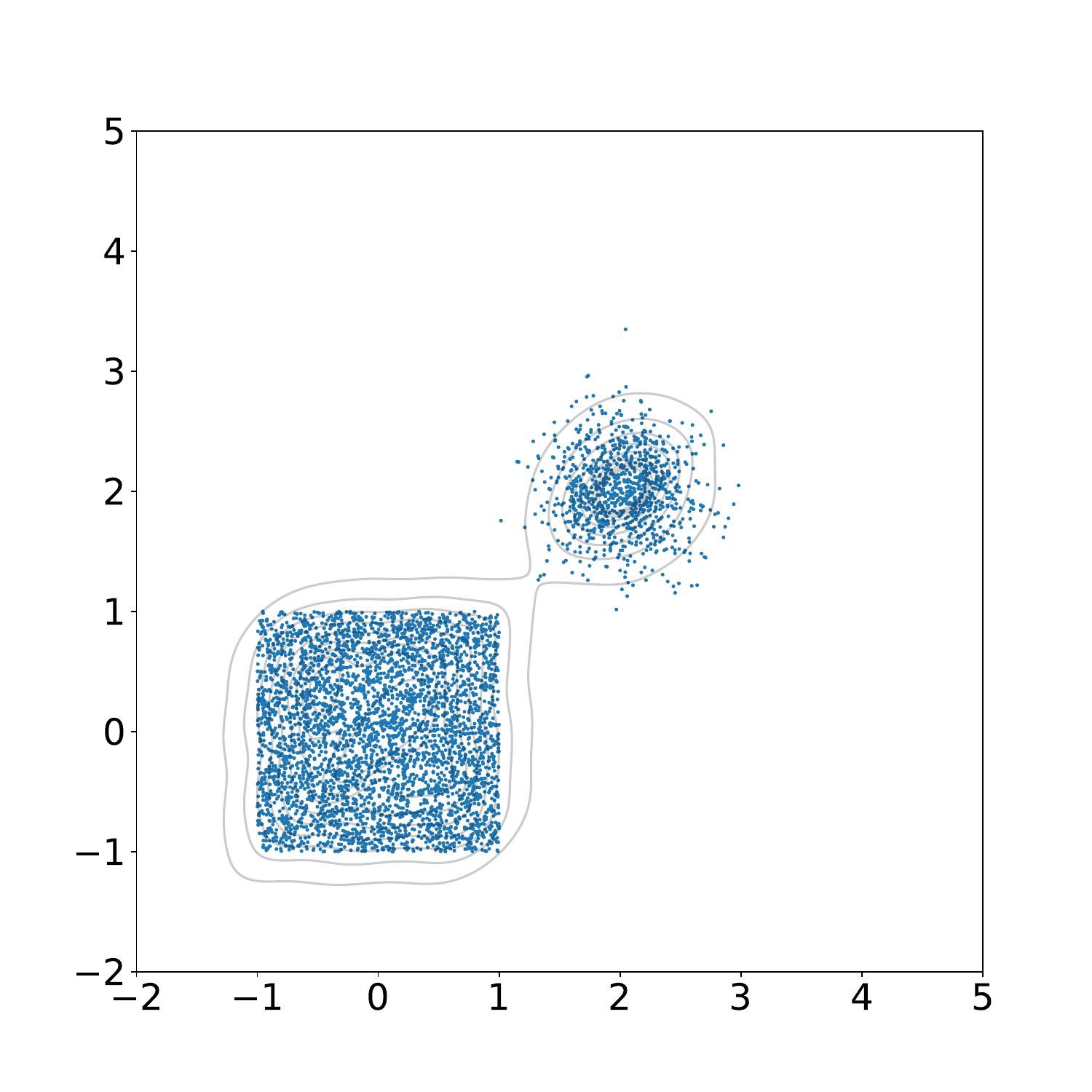}
        \caption{$t=0$}
    \end{subfigure}%
    \begin{subfigure}{0.33\textwidth}
        \centering
        \includegraphics[width=0.99\linewidth]{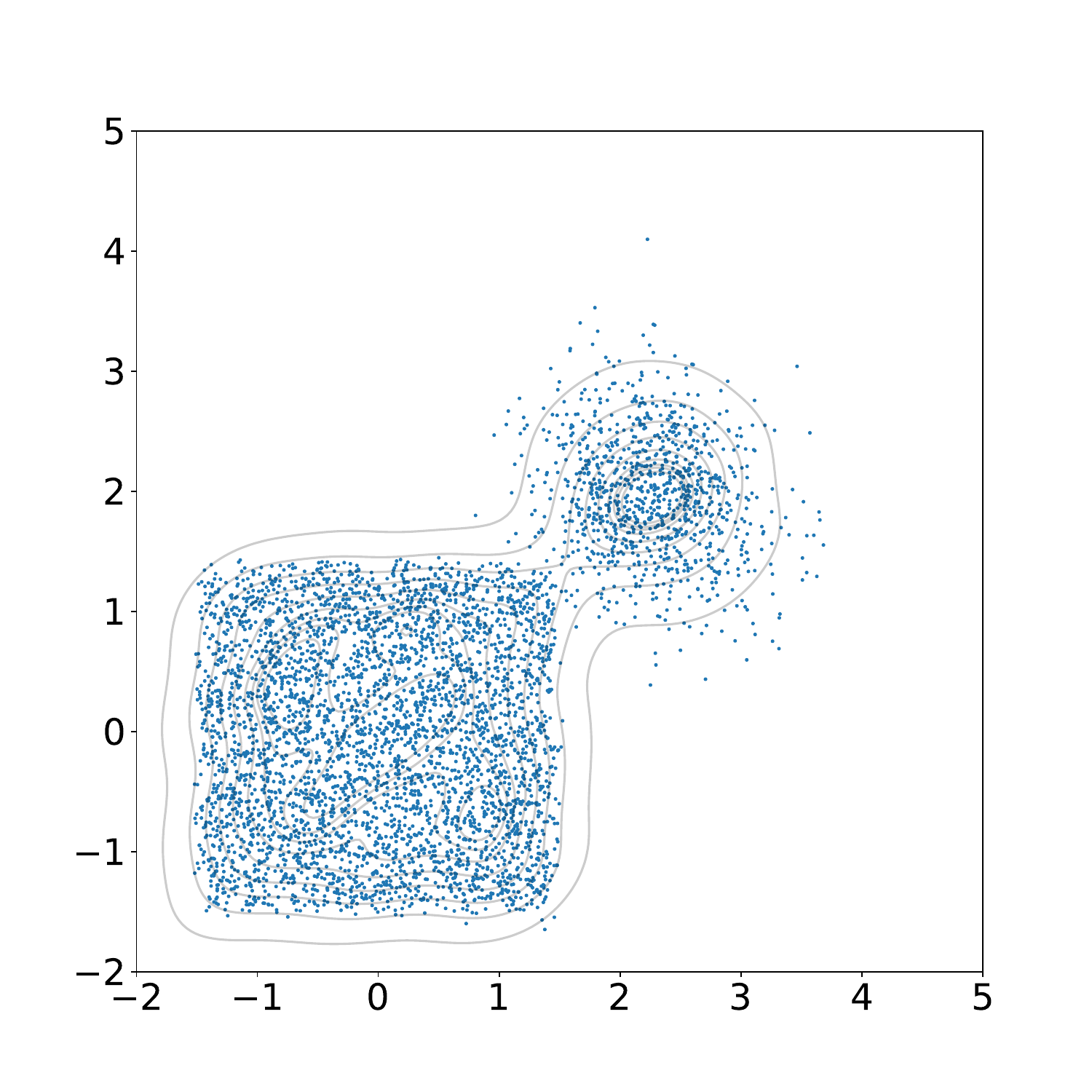}
        \caption{$t=2$}
    \end{subfigure}
    \begin{subfigure}{0.33\textwidth}
        \centering
        \includegraphics[width=0.99\linewidth]{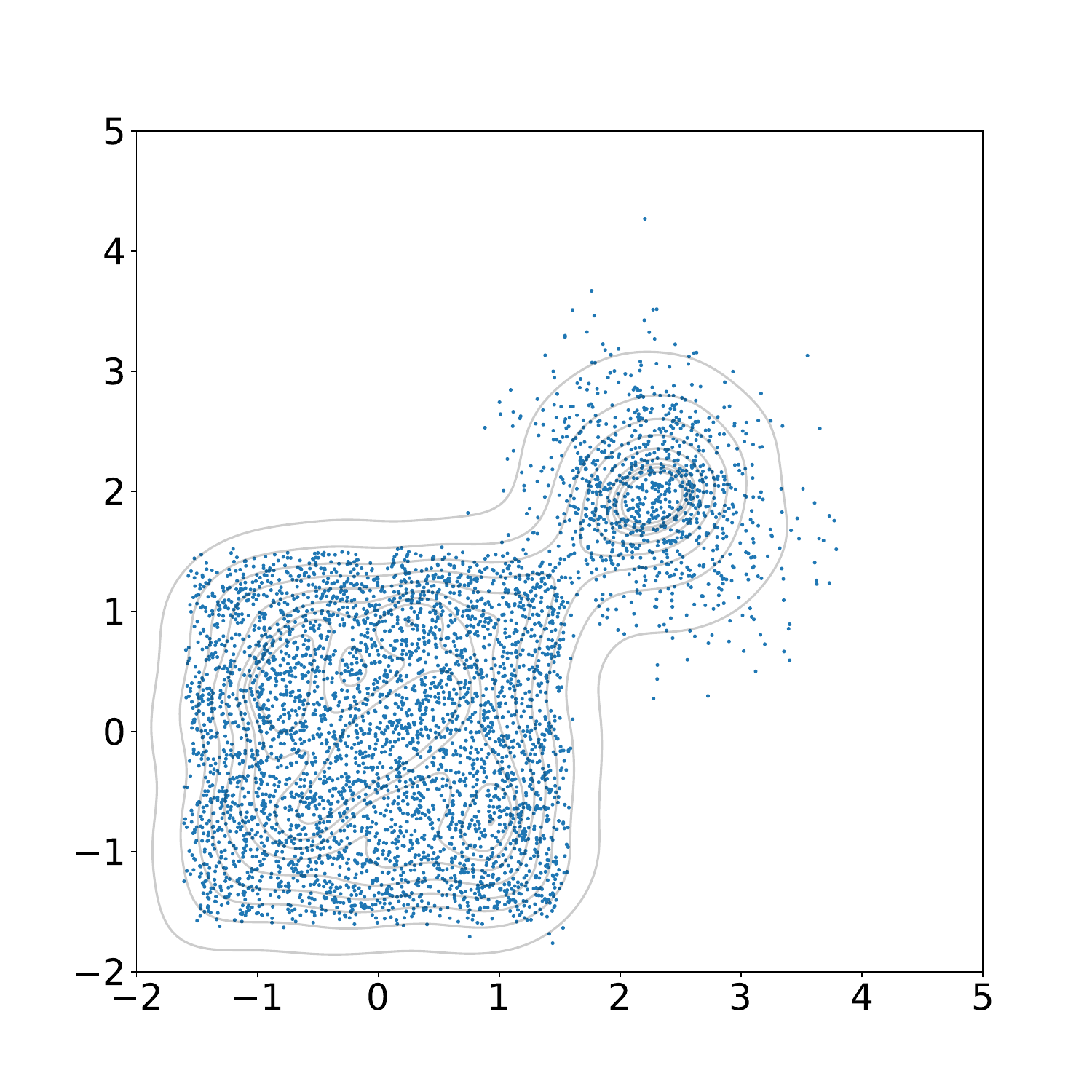}
        \caption{$t=4$}
    \end{subfigure}
    \caption{Sample plots of computed $\rho_{\theta}$ at different time $t$ for porous medium equation with the mixture of Gaussian distribution and uniform distribution as the initial condition. The graphs are plotted with $5000$ samples.}
    \label{fig: GUM}
\end{figure*}

\subsection{Aggregation model as a Wasserstein gradient flow}
\label{sec: aggregation numerical}
In this example, we apply Algorithm \ref{alg:GFsolver} to the aggregation problem with $d=2$ and $a=4$, $b=2$, i.e., the energy functional $\Fcal$ is set as in \eqref{eq:interact-F} with $J(x)$ given by
\begin{align}
    \label{num example: aggregation 2d}
    J(x)=\frac{|x|^4}{4}-\frac{|x|^2}{2}.
\end{align}
We set the initial condition to be Gaussian distribution with mean $\mu_0=(1.25, 1.25)^{\top}$ and variance $\gamma I$ where $\gamma=0.6$.
\begin{align}
    \label{num example: aggregation initial}
    \rho_0(x)=\frac{1}{\sqrt{2\pi}\gamma }e^{-\frac{|x-\mu_0|^2}{2\gamma^2}}
\end{align}
We use a neural network with residual structure as the push-forward map in this example:
\begin{align}
    T_{\theta}=Id+R_{\theta}
\end{align}
where $R_{\theta}$ is multi-layer perceptron with $2$ hidden layers and $50$ neurons in each hidden layer. The sample size to evaluate $\widehat{G}(\theta)$ and $F(\theta)$ are both $10,000$. 

The steady solution $\rho_*$ is a Dirac distribution uniformly concentrated on the ring with radius $0.5$ centered at $\mu_0$.  $\rho$ converges to $\rho_*$ when $t \rightarrow \infty $. This evolution is captured as shown in Figure \ref{aggregation sampleplot}, which shows the distribution $\rho_{\theta(t)}$ of samples from at different times $t$. Note that, despite of the strong singularity of $\rho_*$, the samples generated by Algorithm \ref{alg:GFsolver} can approximate this distribution well.

\begin{figure*}[t!]
    \begin{subfigure}{0.16\textwidth}
        \centering
        \includegraphics[width=0.99\linewidth]{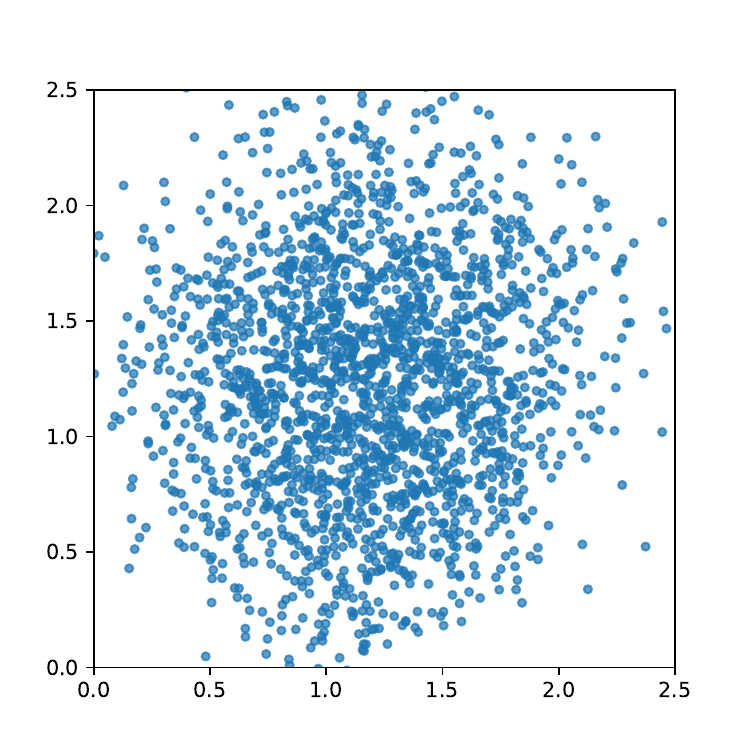}
        \caption{$t=0$}
    \end{subfigure}%
    \begin{subfigure}{0.16\textwidth}
        \centering
        \includegraphics[width=0.99\linewidth]{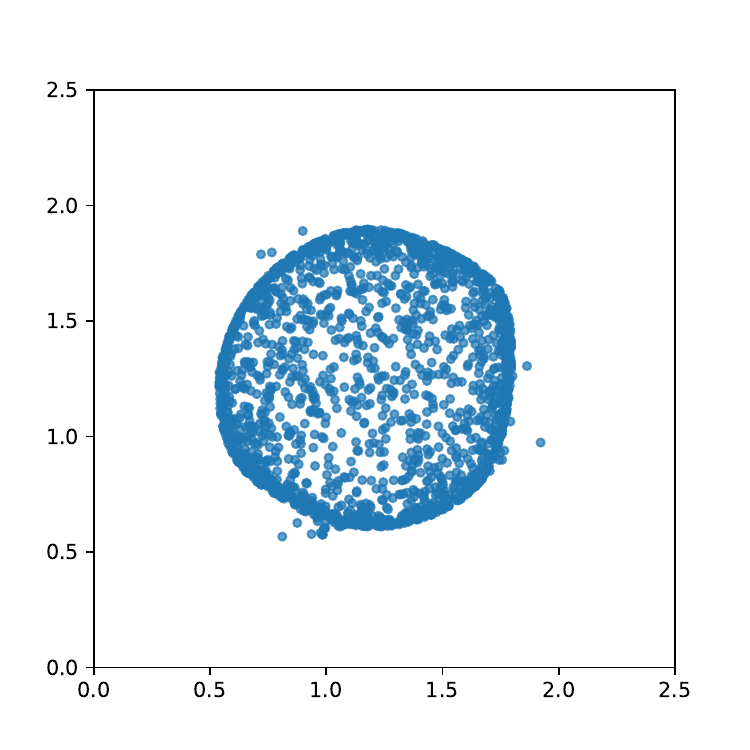}
        \caption{$t=3$}
    \end{subfigure}
    \begin{subfigure}{0.16\textwidth}
        \centering
        \includegraphics[width=0.99\linewidth]{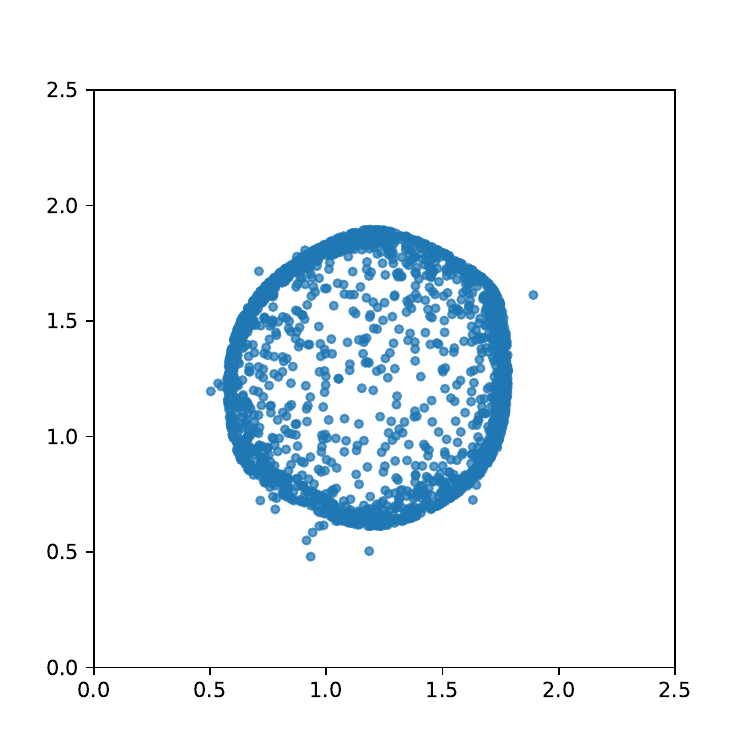}
        \caption{$t=6$}
    \end{subfigure}
    \begin{subfigure}{0.16\textwidth}
        \centering
        \includegraphics[width=0.99\linewidth]{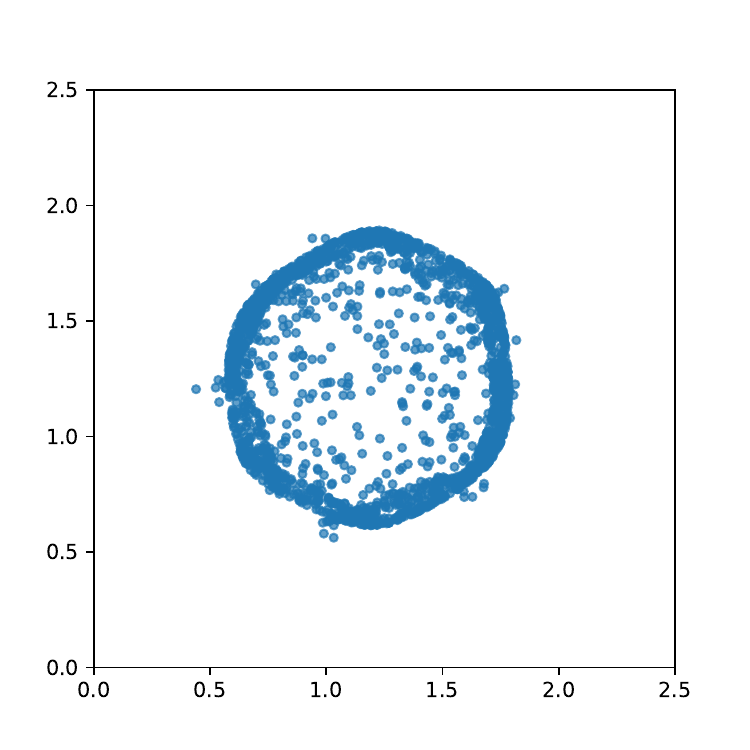}
        \caption{$t=9$}
    \end{subfigure}%
    \begin{subfigure}{0.16\textwidth}
        \centering
        \includegraphics[width=0.99\linewidth]{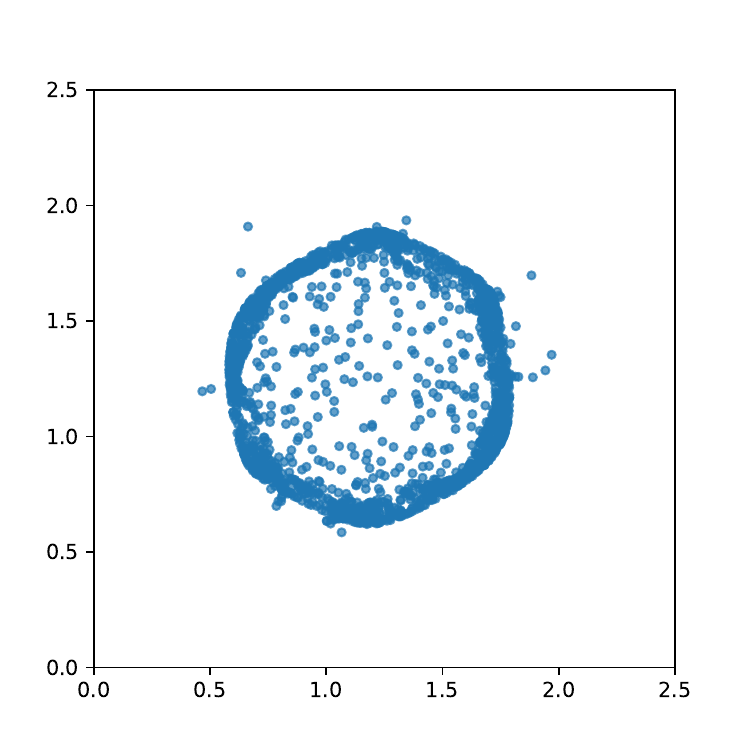}
        \caption{$t=12$}
    \end{subfigure}
    \begin{subfigure}{0.16\textwidth}
        \centering
        \includegraphics[width=0.99\linewidth]{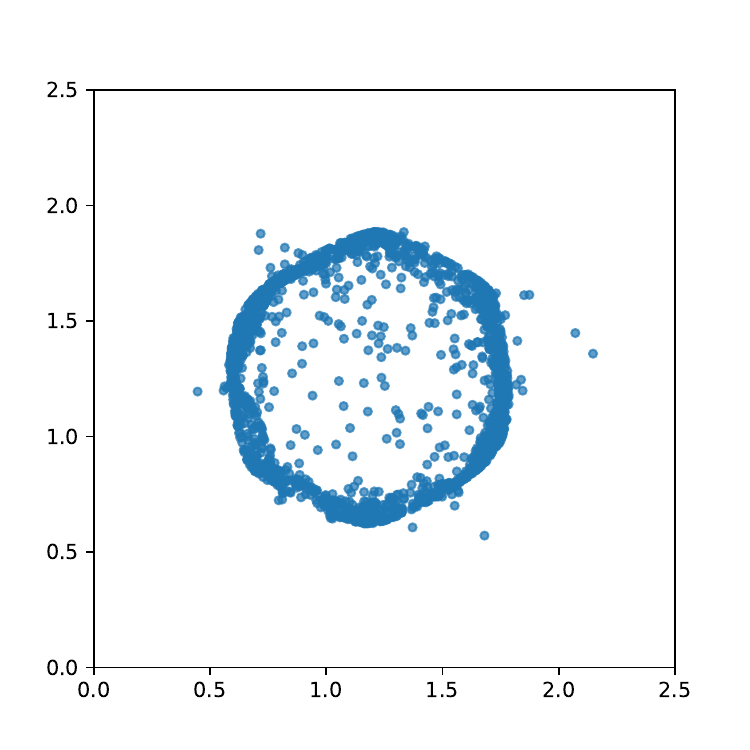}
        \caption{$t=15$}
    \end{subfigure}
    \caption{Sample plots of computed $\rho_{\theta}$ at different time $t$ for aggregation problem}
    \label{aggregation sampleplot}
\end{figure*}

\section{Discussion}
We proposed a new numerical approach to solve Wasserstein gradient flows (WGFs), which is particularly scalable for high-dimensional cases. Our approach used general reduced-order models, like deep neural networks, to parameterize the push-forward maps such that they can push a simple reference density to the density solving the given WGF. Essentially, the gradient flow defined on the infinite-dimensional Wasserstein manifold is reduced to a finite-dimensional dynamical system, the PWGF, for the parameters used in the reduced-order model. 
At the core of our design is the pullback Wasserstein metric on the parameter space. It facilitates the derivation, simplifies the numerical computation, and plays a pivotal role in the error-bound estimates that offer theoretical assurance for PWGF to the original WGF. 

Though the parametrization of the WGF provides a fast algorithm, the accuracy and speed of the algorithm largely rely on the linear system solver for the neural network parameter $\theta$. Since the metric matrix $\widehat{G}(\theta)$ is semi-positive definite and can be degenerate in some scenarios, the minimal residual method is utilized to find the least square solution. 

Our method is sample-based and provides the value of the density function at sampled points, it has the potential to adapt to tasks such as developing a fast sampler of target distribution by evolving samples from the initial probability density following the gradient flow or image generation following certain dynamics of probability densities. In these scenarios, as the task is more complex, the deep neural network for parametrizing the push-forward map could be large or have sophisticated architecture. One may consider dimension reduction techniques to reduce the dimension of the parameter space. In addition, other improvements such as more efficient linear system solvers or preconditioning techniques may also be considered to boost the accuracy and speed. Meanwhile, better understanding on $\delta_0$ in the error analysis needs to be analyzed regarding various neural network structures. 






\section*{Acknowledgments}
This research is partially supported by NSF grants DMS-1925263, DMS-2152960, DMS-2307465, DMS-2307466, and ONR grant N00014-21-1-2891. All the authors made equal contributions. 

\bibliographystyle{siamplain}
\bibliography{references}
\end{document}


\maketitle

\section{A detailed example}

Here we include some equations and theorem-like environments to show
how these are labeled in a supplement and can be referenced from the
main text.
Consider the following equation:
\begin{equation}
  \label{eq:suppa}
  a^2 + b^2 = c^2.
\end{equation}
You can also reference equations such as \cref{eq:matrices,eq:bb} 
from the main article in this supplement.

\lipsum[100-101]

\begin{theorem}
  An example theorem.
\end{theorem}

\lipsum[102]
 
\begin{lemma}
  An example lemma.
\end{lemma}

\lipsum[103-105]

Here is an example citation: \cite{KoMa14}.

\section[Proof of Thm]{Proof of \cref{thm:bigthm}}
\label{sec:proof}
\lipsum[106-112]

\section{Additional experimental results}
\Cref{tab:foo} shows additional
supporting evidence. 

\begin{table}[htbp]
{\footnotesize
  \caption{Example table}  \label{tab:foo}
\begin{center}
  \begin{tabular}{|c|c|c|} \hline
   Species & \bf Mean & \bf Std.~Dev. \\ \hline
    1 & 3.4 & 1.2 \\
    2 & 5.4 & 0.6 \\ \hline
  \end{tabular}
\end{center}
}
\end{table}

\bibliographystyle{siamplain}
\bibliography{references}